\newtheorem{theorem}{Theorem}
\theoremstyle{theorem}
\newtheorem{lemma}[theorem]{Lemma}
\newtheorem{prop}[theorem]{Proposition}
\newtheorem{corollary}[theorem]{Corollary}
\newtheorem*{Theorem*}{Theorem}
\newtheorem*{prop*}{Propostion}
\theoremstyle{definition}
\newtheorem{remark}[theorem]{Remark}
\newtheorem{question}{Question}
\newtheorem{definition}[theorem]{Definition}
\def\om{\omega} 
\def\Om{\Omega}
\def\C{{\mathbb C}}
\def\R{{\mathbb R}}
\newcommand{\1}{{\sf 1\hspace*{-0.8ex}\sf 1 }}
\title[Equilibrium measures]{Equilibrium measures on Julia sets of random quadratic polynomials}
\author{Krzysztof Lech}
 \address{Institute of Mathematics, University of Warsaw, ul. Banacha 2, 02-097 Warszawa, Poland}
 \email{K.Lech@mimuw.edu.pl}
\author{Anna Zdunik}
 \address{Institute of Mathematics, University of Warsaw, ul. Banacha 2, 02-097 Warszawa, Poland}
\email{A.Zdunik@mimuw.edu.pl}
\begin{document}
\begin{abstract}
    We consider sequences of compositions of quadratic polynomials $f_{c_n} (z) = z^2 + c_n$. For such sequences one can naturally generalize the definitions of the Julia set and basin of infinity from the autonomous case. In this setting the Julia set depends on a sequence $\omega = (c_0, c_1, ...)$.
We study the  equilibrium (harmonic) measure  on such Julia sets. In particular, we calculate the Hausdorff dimension of the equilibrium measure and study its dependence on the ''scale of randomness''. 
\end{abstract}

\maketitle

\section{Introduction. The results.}\label{sec:intro}

In recent decades there has been a rapid growth of interest in non-autonomous and random dynamical systems. The monography \cite{Arnold} contains an excellent study of the formalism of such systems and provides a number of fundamental results.
In the setting of random holomorphic dynamics, the foundations of the theore were laid in the seminal paper \cite{FS}.

A systematic study of random dynamics of quadratic polynomial was originated in the papers of Br\"uck, B\"uger and Reitz (see, e.g., \cite{B}, \cite{bruck}, \cite{BBR}.

More recently, Sumi obtained a number of very interesting results concerning action of semigroups of rational maps in both topological and probabilistic context (e.g., \cite{sumi}, \cite{sumi2}).

The recent paper \cite{LZ}  deals with random iteration of quadratic polynomials, 
in particular answering an old question whether the Julia set of a random iteration of quadratic polynomials is typically totally disconnected. 

The study of random iteration of quadratic polynomials concering related questions was then carried out by Sumi and Watanabe, in particular extending the results of \cite{LZ} for several more general settings (\cite{sumi_watanabe}, \cite{watanabe}).

We would like also to draw attention to the work of Comerford, in particular \cite{comerford1}, \cite{comerford2}.
These works exhibit some phenomena which occur in non-autonomous dynamics and which are unlikely for the autonomous one. For example, in \cite{comerford2} the author constructs a non- autonomous  polynomial system with positive Lebesgue Julia set and an  invariant line field on it.

\

In the present paper we are interested in the geometric properties  (mainly: Hausdorff dimension) of the harmonic measure on the Julia set of a random iteration of a quadratic polynomial and its dependence on the size of the region from which the parameter is chosen randomly. Below we explain the setting in more detail.

\

We consider non--autonomous compositions of quadratic polynomials $f_c=z^2+c$, where, at each step  $c$ is chosen 
from some bounded Borel set  $V\subset \mathbb C$ (e.g., the disc $\mathbb D(0,R)$).
Let us introduce the parameter space
$\Omega=V^\mathbb N.$
The space $\Omega$ is equipped with a natural left shift map $\sigma$.
Namely, for every bounded sequence $\underline c =(c_0,c_1, c_2,\dots)$ put
$$\sigma(\underline c)=(c_1, c_2, \dots).$$

\noindent Next, for every $\omega\in\Omega$, $\omega=(c_0,c_1,\dots)$ denote by  $f_\omega$ the map $f_{c_0}$.

\noindent Then the  non-autonomous composition  $f^n_\omega$ is given by the formula

$$f^n_\omega:=f_{c_{n-1}}\circ f_{c_{n-2}}\circ \dots \circ f_{c_0}.$$

\noindent The global dynamics can be described as a skew product
$F:\Omega\times \mathbb C\to \Omega\times\mathbb C,$
$$F(\omega, z)=\left (\sigma(\omega), f_\omega(z)\right ).$$

\noindent Then, for all $n\in\mathbb N$, we have that
$F^n(\omega,z)=\left (\sigma^n(\omega), f_\omega^n(z)\right )$.
So, every sequence $\omega\in\Omega$ determines a sequence of non-autonomous iterates
$\left (f_\omega^n\right )_{n\in\mathbb N}.$ 

\







Analogously to the autonomous case, we consider the following objects:

\begin{itemize}
\item[]{} \emph{escaping set}, or \emph{basin of infinity:} $$\mathcal A_\omega=\{z\in \mathbb C:f^n_\omega(z)\xrightarrow[n\to\infty]{} \infty\}$$
\item[]{} \emph{non-autonomous Julia set:}
$$J_\omega=\{z\in\mathbb C: \text{for every open set }~  U\ni z~\text{the family}~{f^n_\omega}_{|U}~  \text{is not normal.}\}$$
\item[]{} \emph{non- autonomous filled-in Julia set:}
$K_\omega=\mathbb C\setminus \mathcal A_\omega$.
\end{itemize}

\

The following proposition can be found in \cite{B} (Theorem 1). Its proof  is analogous to the autonomous case.

\begin{prop} Let $\omega\in \mathbb D(0,R)^\mathbb N$. Then 
$$J_\omega=\partial\mathcal A_\omega,$$
\end{prop}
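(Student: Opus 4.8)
The plan is to prove the two inclusions $J_\omega \subseteq \partial\mathcal A_\omega$ and $\partial\mathcal A_\omega \subseteq J_\omega$ separately, exploiting the fact that the basin of infinity $\mathcal A_\omega$ is an open set on which the family $\{f^n_\omega\}$ is normal (converging locally uniformly to $\infty$), and that the interior of $K_\omega$ is likewise a normality domain.

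First I would establish the uniform escape estimate that underlies everything: for $|c_k|\le R$, if $|z|$ is large enough (say $|z| \ge \rho_R := \max(2, R+1)$, or whatever threshold makes $|z^2+c|\ge |z|^2/2 > |z|$ hold on $\mathbb{D}(0,R)^{\mathbb N}$) then $|f_\omega^n(z)|\to\infty$ at a controlled rate, uniformly in $\omega$. This shows that $\mathcal A_\omega$ is open, nonempty, and contains a neighborhood of $\infty$; moreover on $\mathcal A_\omega$ the iterates tend locally uniformly to $\infty$, hence $\{f^n_\omega\}$ is normal there. Consequently $\mathcal A_\omega \cap J_\omega = \emptyset$. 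Next I would argue that $\operatorname{int} K_\omega$ is also contained in the normal locus: on any component of the interior of $K_\omega$ the orbits stay in a fixed bounded set (since the escape estimate forces any orbit that ever enters $\{|z|\ge \rho_R\}$ to escape), so $\{f^n_\omega\}$ is uniformly bounded there and hence normal by Montel. Combining these two facts gives $J_\omega \subseteq \mathbb{C}\setminus(\mathcal A_\omega \cup \operatorname{int}K_\omega) = \partial \mathcal A_\omega$, using $K_\omega = \mathbb{C}\setminus\mathcal A_\omega$.

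For the reverse inclusion $\partial\mathcal A_\omega \subseteq J_\omega$, I would take a point $z_0 \in \partial\mathcal A_\omega$ and any open neighborhood $U$ of $z_0$, and show the family $\{f^n_\omega|_U\}$ cannot be normal. Since $z_0 \in \partial \mathcal A_\omega$, every such $U$ meets $\mathcal A_\omega$, where the iterates tend to $\infty$, and also meets $K_\omega$, where by definition the iterates remain bounded (they do not escape to $\infty$). Thus on $U$ the iterates $f^n_\omega$ take values tending to $\infty$ on one part and staying bounded on another; no subsequence can converge locally uniformly (to a holomorphic function or to $\infty$) on all of $U$, so normality fails at $z_0$ and $z_0\in J_\omega$.

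The main obstacle I anticipate is the non-autonomous subtlety in the normality argument on $\operatorname{int}K_\omega$ and in ruling out convergence on $U$: in the autonomous case one can invoke a single map and its dynamics, but here the maps $f_{c_k}$ vary with $k$, so I must phrase the escape dichotomy uniformly and make sure ``bounded orbit'' genuinely means the orbit stays in one compact set independent of $n$ (which the escape estimate guarantees, since once $|f_\omega^k(z)|\ge \rho_R$ the modulus strictly increases to $\infty$). Handling the normal-family criterion for sequences of distinct maps — in particular verifying that a locally uniformly convergent subsequence would force the limit to be simultaneously bounded and infinite on overlapping pieces of $U$ — is the delicate point, but it reduces to the standard Montel-type argument once the uniform escape estimate is in hand.
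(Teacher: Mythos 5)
Your proof is correct and follows essentially the route the paper itself indicates: the paper gives no proof of this proposition, citing instead B\"uger's Theorem~1 in \cite{B} and noting that the argument is ``analogous to the autonomous case,'' which is precisely what you reconstruct (the uniform escape estimate of Proposition~\ref{prop:log} making $\mathcal A_\omega$ open with iterates tending locally uniformly to $\infty$, Montel on $\operatorname{int}K_\omega$ via the uniform bound $|f^n_\omega(z)|\le R_0$ for non-escaping orbits, and the coexistence of bounded and escaping orbits in any neighbourhood of a boundary point killing normality there). You also correctly identify and resolve the only non-autonomous subtlety, namely that ``does not escape'' must be upgraded to ``stays in a fixed compact set,'' which the escape estimate provides.
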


Let us also note the following straightforward observations:

\begin{prop}\label{prop:est_julia}
For every $\omega\in\mathbb D(0,R)^{\mathbb N}$
$$J_{\sigma\omega}=f_\omega(J_\omega), \quad  \mathcal A_{\sigma\omega}=f_\omega(\mathcal A_{\omega})$$.
\end{prop}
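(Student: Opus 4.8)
The plan is to derive both equalities from the single composition identity
$$f^{n+1}_\omega = f^n_{\sigma\omega}\circ f_\omega,\qquad n\ge 0,$$
which is immediate from the definitions, since $f_\omega=f_{c_0}$ while $f^n_{\sigma\omega}=f_{c_n}\circ\cdots\circ f_{c_1}$. I would first dispose of the escaping set. For the inclusion $f_\omega(\mathcal A_\omega)\subseteq\mathcal A_{\sigma\omega}$, take $z\in\mathcal A_\omega$; then $f^n_{\sigma\omega}(f_\omega(z))=f^{n+1}_\omega(z)\to\infty$, so $f_\omega(z)\in\mathcal A_{\sigma\omega}$. For the reverse inclusion I would use that the quadratic map $f_\omega$ is surjective on $\mathbb C$: given $w\in\mathcal A_{\sigma\omega}$ choose $z$ with $f_\omega(z)=w$; then $f^{n+1}_\omega(z)=f^n_{\sigma\omega}(w)\to\infty$ forces $f^n_\omega(z)\to\infty$, i.e.\ $z\in\mathcal A_\omega$. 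This settles $\mathcal A_{\sigma\omega}=f_\omega(\mathcal A_\omega)$.

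For the Julia sets I would pass to the filled-in sets $K_\omega=\mathbb C\setminus\mathcal A_\omega$ and invoke the preceding proposition together with the observation $J_\omega=\partial\mathcal A_\omega=\partial K_\omega$ (the boundary of an open set coincides with that of its complement). Since $z$ and $f_\omega(z)$ have bounded forward orbits simultaneously, the same type of argument as above — again using surjectivity of $f_\omega$ — yields both the forward invariance $f_\omega(K_\omega)=K_{\sigma\omega}$ and the backward invariance $f_\omega^{-1}(K_{\sigma\omega})=K_\omega$. Because $f^n_\omega(z)\to\infty$ for large $|z|$, the set $K_\omega$ is bounded, hence compact. The task is thus reduced to the purely topological identity $\partial(f_\omega(K_\omega))=f_\omega(\partial K_\omega)$.

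One inclusion holds for every non-constant polynomial $p$ and every compact set $K$: if $w\in\partial(p(K))$ then $w\in p(K)$ (the latter being compact, hence closed), and were every point of the finite fibre $p^{-1}(w)\cap K$ interior to $K$, then choosing neighbourhoods $U_i\subseteq K$ of these preimages and using that $p$ is open would exhibit the open set $\bigcup_i p(U_i)$ as a neighbourhood of $w$ contained in $p(K)$, contradicting $w\in\partial(p(K))$; hence some preimage of $w$ lies in $\partial K$ and $w\in p(\partial K)$. Applied to $p=f_\omega$ and $K=K_\omega$ this gives $J_{\sigma\omega}=\partial K_{\sigma\omega}\subseteq f_\omega(\partial K_\omega)=f_\omega(J_\omega)$.

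The reverse inclusion $f_\omega(\partial K_\omega)\subseteq\partial K_{\sigma\omega}$ is the main obstacle, and I expect it to be the place where the dynamics is genuinely needed: it is false for an arbitrary compact set, since a polynomial may fold a boundary arc into the interior of its image. Here the backward invariance should resolve the difficulty. Take $z\in\partial K_\omega$; then $f_\omega(z)\in f_\omega(K_\omega)=K_{\sigma\omega}$, and if $f_\omega(z)$ were interior to $K_{\sigma\omega}$, say with open neighbourhood $W\subseteq K_{\sigma\omega}$, then $f_\omega^{-1}(W)$ would be an open neighbourhood of $z$ contained in $f_\omega^{-1}(K_{\sigma\omega})=K_\omega$, forcing $z\in\mathrm{int}\,K_\omega$ and contradicting $z\in\partial K_\omega$. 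Hence $f_\omega(z)\in\partial K_{\sigma\omega}$, and combining the two inclusions gives $f_\omega(J_\omega)=J_{\sigma\omega}$.
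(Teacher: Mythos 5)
Your proof is correct. The paper in fact gives no proof of this proposition at all -- it is presented as a pair of ``straightforward observations'' -- so your argument simply supplies the details the authors leave to the reader, and it does so by the natural route: the identity $f^{n+1}_\omega=f^n_{\sigma\omega}\circ f_\omega$ together with surjectivity of $f_\omega$ gives $\mathcal A_{\sigma\omega}=f_\omega(\mathcal A_\omega)$ and, equivalently, the full invariance $f_\omega^{-1}(K_{\sigma\omega})=K_\omega$ and $f_\omega(K_\omega)=K_{\sigma\omega}$; then the boundary identity follows from your two topological arguments, openness of non-constant holomorphic maps handling $\partial\bigl(f_\omega(K_\omega)\bigr)\subseteq f_\omega(\partial K_\omega)$ and backward invariance handling the reverse inclusion (which, as you rightly note, fails for arbitrary compacta). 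Your appeal to $J_\omega=\partial\mathcal A_\omega=\partial K_\omega$ is also legitimate, since that is the preceding proposition of the paper (quoted from \cite{B}). Only two points are used tacitly and deserve a word: first, $\mathcal A_\omega$ is open (equivalently $K_\omega$ is closed), which is what makes $K_\omega$ compact rather than merely bounded and guarantees $\partial K_\omega\subseteq K_\omega$; this follows from Proposition~\ref{prop:log}, since $\mathcal A_\omega=\bigcup_n (f^n_\omega)^{-1}(\mathbb D_{R_0}^*)$. Second, ``bounded forward orbit'' should read ``non-escaping orbit,'' as that is the definition of membership in $K_\omega$; the equivalence you actually establish and use, $z\in\mathcal A_\omega\Leftrightarrow f_\omega(z)\in\mathcal A_{\sigma\omega}$, is the correct one.
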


Denote by   $\mu_\omega$ -- the equilibrium (harmonic) measure $\mu_\omega$  on $\partial\mathcal A_\omega=J_\omega$. (see Section~\ref{sec:green}).
The properties of this distribution  are the subject of this paper.

\

We  introduce random bounded systems of quadratic maps (see Definition~\ref{def:bounded_random}), where, in the consecutive steps, tha parameter $c$ is chosen according to the action of some ergodic automorphism on a probability space (see Section~\ref{sec:random_green} for the  definition).
The collection of equilibrium measure $(\mu_\omega)_{\omega\in\Omega}$ now becomes a random probability measure (see Section~\ref{sec:random_general}). 

We then introduce a class of bounded  random systems of quadratic maps, called \emph{independent, typically fast escaping}. This class of random systems was considered in \cite{LZ}. In particular, we proved in \cite{LZ} that for such systems the Julia set is almost surely totally disconnected.  A number of examples of random systems satisfying the above assumption is provided in \cite{LZ}. Perhaps the most surprising is the random system for which the parameter is chosen randomly from the main cardioid of the Mandelbrot set.

In Section~\ref{sec:formula} we prove

\begin{Theorem*} [Theorem~\ref{thm:dim_harm}]
Let $\mathcal S$ be a bounded random system of independent quadratic maps. Assume additionally that $\mathcal S$ is typically fast escaping.

\

Then for $\mathbb P$-a.e. $\omega\in\Omega$ 

\begin{equation}\label{eq:dim_max}
\dim_H(\mu_\omega)=\frac{\ln 2}{\chi}= \frac{\ln 2}{\ln 2+{\bf g}(0)}<1
\end{equation}
\end{Theorem*}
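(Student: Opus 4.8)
The plan is to compute the Hausdorff dimension of the harmonic measure via the standard thermodynamic identity $\dim_H(\mu_\omega) = h_{\mu_\omega}/\chi$, where $h_{\mu_\omega}$ is the measure-theoretic entropy and $\chi$ is the Lyapunov exponent. For a single quadratic polynomial, the harmonic (equilibrium) measure is the measure of maximal entropy, with entropy $\ln 2$, and the Green's function provides the conjugacy to the doubling map on the circle. The first step is therefore to establish the analogous structure in the non-autonomous setting: using Proposition~\ref{prop:est_julia}, the maps $f_\omega$ push the measure $\mu_\omega$ forward to $\mu_{\sigma\omega}$, so the family $(\mu_\omega)_\omega$ is equivariant under the skew product $F$. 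Because each $f_\omega$ is a degree-$2$ branched cover and $\mu_\omega$ is the pullback of normalized Lebesgue measure on the circle under the B\"ottcher-type coordinate at infinity, the "fiberwise entropy" contributed at each step is exactly $\ln 2$, giving numerator $\ln 2$.

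Next I would identify the Lyapunov exponent $\chi$ with $\ln 2 + \mathbf g(0)$, where $\mathbf g$ denotes the (random) Green's function defined in Section~\ref{sec:green}/\ref{sec:random_green}. The key computation is the change-of-variables formula relating $\int \ln|f_\omega'|\,d\mu_\omega$ to the value of the Green's function at the critical point $0$. Concretely, $\ln|f_\omega'(z)| = \ln 2 + \ln|z|$, and integrating $\ln|z|$ against the equilibrium measure is governed by the potential-theoretic relation between $\mu_\omega$ and the Green's function with pole at infinity; the critical value enters because the Green's function's behavior at $0$ measures the escape rate of the critical orbit. I would make this precise by writing $\chi = \lim_n \frac{1}{n}\ln|(f_\omega^n)'|$ along $\mu_\omega$-typical points, invoking the ergodic theorem for the skew product $F$ with respect to the measure obtained by integrating the fibered measures $\mu_\omega$ against $\mathbb P$, and then expressing the resulting integral through the functional equation the random Green's function satisfies.

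The strict inequality $\dim_H(\mu_\omega) < 1$ is the conceptually important consequence, and I would derive it from $\mathbf g(0) > 0$. This is precisely where the \emph{typically fast escaping} hypothesis does the essential work: it guarantees that the critical point $0$ lies in the basin of infinity $\mathcal A_\omega$ for almost every $\omega$, so the Green's function value $\mathbf g(0)$ is strictly positive (indeed the total-disconnectedness result from \cite{LZ} rests on the same escape condition). Once $\mathbf g(0)>0$ is known, the denominator strictly exceeds the numerator and the inequality follows immediately.

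The main obstacle I anticipate is the rigorous justification of the entropy-to-dimension formula in the non-autonomous, measure-theoretically random setting, where the usual Besicovitch/Frostman covering arguments and the Manning--McCluskey dimension formula must be adapted to a skew product rather than a single expanding map. In particular, establishing that $\mu_\omega$ is exact-dimensional almost surely, and that the pointwise dimension equals $h/\chi$ for almost every point, requires controlling the fibered Birkhoff averages uniformly enough to run a Volume Lemma. I expect this to hinge on a careful bounded-distortion estimate for the random compositions $f_\omega^n$ near the Julia set, using the typically-fast-escaping assumption to keep the critical orbit well away from $J_\omega$ and thereby secure uniform hyperbolicity of the fibered dynamics on an $\mathbb P$-full set of $\omega$.
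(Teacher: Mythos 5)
Your identification of the Lyapunov exponent $\chi=\ln 2+\mathbf{g}(0)$ via the potential-theoretic identity $p_{\mu_\omega}=g_\omega$, and your derivation of the strict inequality from $\mathbf{g}(0)>0$, both match the paper (Proposition~\ref{prop:exponent} and the fast-escaping tail bound). But the core of your plan rests on two claims that fail in this setting. First, you take the numerator $\ln 2$ from a conjugacy of $\mu_\omega$ with circle doubling via a ``B\"ottcher-type coordinate at infinity.'' For a typically fast escaping system the critical point $0$ lies in $\mathcal A_\omega$ almost surely, so $g_\omega$ has a critical point inside the basin and no such global coordinate exists; indeed $J_\omega$ is almost surely totally disconnected. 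The paper never uses entropy theory or any conjugacy: the factor $2^{-k}$ per step comes directly from the pushforward relation \eqref{eq:maximal}, namely $\mu_{\sigma^k\omega}\bigl(f^k_\omega(S)\bigr)=2^k\mu_\omega(S)$ on sets where $f^k_\omega$ is injective.

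Second, and more seriously, you propose to secure the Volume-Lemma step by asserting that the typically fast escaping hypothesis ``keeps the critical orbit well away from $J_\omega$'' and yields ``uniform hyperbolicity of the fibered dynamics.'' It does not: condition \eqref{eq: fast_escaping} is only a tail bound $\mathbb P\left(g_\omega(0)<2^{-k}\right)<e^{-\gamma k}$, and critical orbits do pass arbitrarily close to the Julia set with positive probability; the dynamics is genuinely non-uniformly hyperbolic. The actual role of the hypothesis is to supply Theorem~\ref{thm:main_step} and Proposition~\ref{prop:complement} (from \cite{LZ}): for a.e.\ $\omega$ there is a sequence of times $k_n$ with $k_{n+1}/k_n\to 1$ such that each component of $(f^{k_n}_\omega)^{-1}(\tilde D)$ is mapped injectively by $f^{k_n-K}_\omega$. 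Koebe distortion (Lemma~\ref{lem:balls}) then identifies these components with round disks of radius comparable to $|(f^{k_n-K}_\omega)'(z)|^{-1}$, the pointwise dimension is computed along this sequence, and the gaps between the radii are filled using $k_{n+1}/k_n\to 1$. Your plan also omits the ergodicity of the skew product $F$ with respect to the global measure (Proposition~\ref{prop:global_ergodic}), which is itself nontrivial (its proof again uses Proposition~\ref{prop:complement}) and is exactly what makes the Birkhoff averages $\frac 1n \ln|(f^n_\omega)'(z)|$ converge to the constant $\chi$ rather than merely to some invariant function. Without these ingredients your dimension formula remains unproved.
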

Here, ${\bf g}(0)$ is the value of the \emph{global Green's function}.

\

In Section~\ref{sec:dependence} we study random systems of quadratic maps, depending on a parameter which measures the ''scale of randomness''.

\

\begin{Theorem*}[see Theorems~ \ref{thm:continuous} and \ref{thm:asymp}]
Let $\mathbb P_R$ the product distribution of uniform 
distributions on $\mathbb D(0,R)$ in the product space $\mathbb D(0,R)^
{\mathbb N}$. Let ${\bf g}_R$ be the global Green's fuction for the random 
system of quadratic maps generated by this distribution. 

The generalized Green function ${\bf g}_R (0)$ depends continuously on $R$ and  satisfies
$\lim\limits_{R \rightarrow \infty} \frac{{\bf g}_{R}(0)}{\ln (R)} = \frac 1 2$.
Consequently,
$$\lim_{R\to\infty}{\dim_H(\mu^R_\omega)}\cdot {\ln R}=2\ln 2.$$
\end{Theorem*}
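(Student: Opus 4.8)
The plan is to reduce everything to a single self-consistency identity for the expected Green's function, together with one uniform escape estimate. Throughout write $\mathbf{g}_R(z)=\mathbb{E}_{\mathbb{P}_R}[g_\omega(z)]$ for the fibrewise Green's functions $g_\omega(z)=\lim_n 2^{-n}\ln^+|f^n_\omega(z)|$. The cocycle relation $g_\omega(z)=\tfrac12 g_{\sigma\omega}(f_\omega(z))$ (immediate from $f^n_\omega=f^{\,n-1}_{\sigma\omega}\circ f_\omega$), the independence of $c_0$ from $\sigma\omega$, and the shift-invariance of $\mathbb{P}_R$ give
$$\mathbf{g}_R(z)=\tfrac12\,\mathbb{E}_{c}\big[\mathbf{g}_R(z^2+c)\big],\qquad c\sim\mathrm{Unif}(\mathbb{D}(0,R)),$$
and in particular $\mathbf{g}_R(0)=\tfrac12\mathbb{E}_c[\mathbf{g}_R(c)]$. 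This turns the computation of $\mathbf{g}_R(0)$ into an estimate of $\mathbf{g}_R$ at a \emph{typical} point of $\mathbb{D}(0,R)$.

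Next I would establish a uniform escape estimate. Setting $R_*=\sqrt{2R}$, for $|w|\ge R_*$ (and $R$ large, so $R_*\ge 2$) one has $|f_c(w)|\ge|w|^2-R\ge|w|^2/2\ge|w|$ whenever $|c|\le R$; hence the entire orbit of $w$ stays above $R_*$ and escapes. Telescoping $\ln|f^{n+1}_\omega(w)|=2\ln|f^n_\omega(w)|+\ln|1+c_n/(f^n_\omega(w))^2|$ and using $|c_n/(f^n_\omega(w))^2|\le R/R_*^2=\tfrac12$ yields $\big|g_\omega(w)-\ln|w|\big|\le\ln 2$ for all $|w|\ge R_*$ and all $\omega$, and the same bound for $\mathbf{g}_R$ after taking expectations. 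For the complementary region, the maximum principle applied to the subharmonic function $g_\omega$ gives $\sup_{|z|\le R_*}g_\omega\le\ln R_*+\ln 2$, so $0\le\mathbf{g}_R(w)\le\tfrac12\ln(2R)+\ln 2$ everywhere.

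The asymptotics then follow by splitting the identity according to $\{|c|\ge R_*\}$ and $\{|c|<R_*\}$. On the first event $\mathbf{g}_R(c)=\ln|c|+O(1)$; on the second $\mathbf{g}_R(c)=O(\ln R)$, but $\mathbb{P}_R(|c|<R_*)=R_*^2/R^2=2/R$, so this contribution is $O(R^{-1}\ln R)=o(1)$. Combining with the elementary computation
$$\mathbb{E}_c[\ln|c|]=\frac{2}{R^2}\int_0^R r\ln r\,dr=\ln R-\tfrac12$$
and the truncation bound $\mathbb{E}[\,|\ln|c||\,\mathbf{1}_{|c|<R_*}]=O(R^{-1}\ln R)$, I obtain $\mathbf{g}_R(0)=\tfrac12\ln R+O(1)$, whence $\mathbf{g}_R(0)/\ln R\to\tfrac12$.

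For continuity I would couple the distributions by the scaling $c=R\tilde c$, $\tilde c\sim\mathrm{Unif}(\mathbb{D}(0,1))$, writing $\mathbf{g}_R(0)=\mathbb{E}_{\tilde\omega\sim\mathbb{P}_1}[g_{R\tilde\omega}(0)]$; for fixed $\tilde\omega$ the orbit of $0$ depends analytically on $R$ and $g$ is jointly continuous, while on any compact $R$-interval the a priori bound above dominates $g_{R\tilde\omega}(0)$ by a constant, so dominated convergence yields continuity of $R\mapsto\mathbf{g}_R(0)$. The dimension statement is then immediate from Theorem~\ref{thm:dim_harm}: since $\dim_H(\mu^R_\omega)=\ln 2/(\ln 2+\mathbf{g}_R(0))$ almost surely,
$$\dim_H(\mu^R_\omega)\cdot\ln R=\frac{\ln 2}{\frac{\ln 2}{\ln R}+\frac{\mathbf{g}_R(0)}{\ln R}}\xrightarrow[R\to\infty]{}\frac{\ln 2}{1/2}=2\ln 2.$$
The main obstacle I expect is not the arithmetic but making the escape estimate genuinely uniform in $R$ and in the full randomness while simultaneously controlling the non-escaping part of the critical orbit: on $\{|c|<R_*\}$ one only knows $\mathbf{g}_R=O(\ln R)$, and the argument works precisely because this region has probability $O(1/R)$, which is just enough to kill its contribution.
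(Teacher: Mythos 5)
Your treatment of the asymptotics is correct, and it takes a genuinely different route from the paper's proof of Theorem~\ref{thm:asymp}. The paper argues pathwise: an upper bound $g_\omega(0)\le g_{\omega_R}(0)$ by domination with the constant sequence (iterating $z^2+R$), and a lower bound by restricting to the event $\{|\omega(0)|>\delta R\}$ (of probability $1-\delta^2$) and iterating the comparison map $z^2-R$, which yields $\tfrac12(1-\delta^2)\ln(\delta R)\le \mathbf{g}_R(0)\le g_{\omega_R}(0)$ and then the limit after letting $\delta\to0$. You instead exploit the stationarity identity $\mathbf{g}_R(0)=\tfrac12\,\mathbb{E}_c[\mathbf{g}_R(c)]$ (valid by the cocycle relation \eqref{eq:invariance}, shift invariance and independence, with measurability supplied by Proposition~\ref{prop:random_harmonic_measure}) together with a uniform comparison $|\,\mathbf{g}_R(w)-\ln|w|\,|\le\ln 2$ outside $\mathbb{D}_{\sqrt{2R}}$ and the maximum principle inside. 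This is more structural and in fact stronger: it gives $\mathbf{g}_R(0)=\tfrac12\ln R+O(1)$, not merely the ratio limit. The derivation of the dimension statement from Theorem~\ref{thm:dim_harm} is the same in both cases.

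The continuity part, however, has a gap. After the (correct, and same as the paper's) scaling coupling $\mathbf{g}_R(0)=\mathbb{E}_{\mathbb{P}_1}[g_{R\tilde\omega}(0)]$, you assert that $R\mapsto g_{R\tilde\omega}(0)$ is continuous for fixed $\tilde\omega$ because ``the orbit of $0$ depends analytically on $R$ and $g$ is jointly continuous.'' But joint continuity of $(\omega,z)\mapsto g_\omega(z)$ in the \emph{sequence} variable (note that varying $R$ perturbs every coordinate of $R\tilde\omega$ at once) is precisely the nontrivial content here, not a citable fact: the finite-time functions $2^{-n}\ln^+|f^n_{R\tilde\omega}(0)|$ are continuous in $R$, but their limit defining $g$ need not be continuous without a uniformity statement, and a priori the escaping/non-escaping dichotomy could produce a jump of $g_\cdot(0)$ between $0$ and a positive value. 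This is exactly what the paper's Lemmas~\ref{lem:bound}, \ref{lem:simseq} and \ref{lem:cont_green} address, with a three-case analysis (both orbits escape, one escapes, neither escapes) showing a modulus of continuity of order $2^{-N}$, uniform in $\omega$. Your step is repairable with tools you already use: the cocycle relation $g_\omega(0)=2^{-n}g_{\sigma^n\omega}(f^n_\omega(0))$ together with the uniform bounds of Proposition~\ref{prop:log} gives $\bigl|g_\omega(0)-2^{-n}\ln^+|f^n_\omega(0)|\bigr|\le C\,2^{-n}$ uniformly over $\omega\in\Omega_{R_2}$, so $g_{R\tilde\omega}(0)$ is a \emph{uniform} limit of functions continuous in $R$; this yields the desired continuity (and even makes dominated convergence unnecessary, since the convergence of the integrands is then uniform in $\tilde\omega$ as well). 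As written, though, the key claim is assumed rather than proved.
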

Here, $\mu_\omega^R$ denotes the harmonic measure on the set $J_\omega$, where $\omega\in \mathbb D(0,R)^{\mathbb N}$. For given $R$, the dimension  $\dim_H(\mu^R_\omega)$ is constant almost everywhere.

Further, we consider in Section~\ref{sec:dependence} random iterates of maps $f_c$ near a parameter $c_0$ which is outside the Mandelbrot set.

\begin{prop*}[see Proposition~ \ref{cor:green_harmonic3}, Proposition~\ref{prop:dim_analytic} and Proposition~\ref{prop:stronger}]
Let $\nu$ be a Borel probability measure on $\mathbb D(0,1)$. Let   $\mathbb P$ be the product distribution on $\Omega$ generated by $\nu$. Let $c_0\notin\mathcal M$ and let $\Lambda=\mathbb D(0,\delta_0)$, with $\delta_0>0$ sufficiently small. 
For $\omega\in \mathbb D(0,1)^\mathbb N put $ $f_{\lambda,\omega}= f_{c_0+\lambda \omega_0}$

 The function
$$\lambda\mapsto {\bf g}_\lambda(0)=\int_{\omega\in\Omega}g_{\lambda,\omega}(0)d\mathbb P(\omega)$$
is   harmonic  in $\Lambda$.


Consequently, the  functions $\delta\mapsto {\bf g}_\delta(0)$ and $[0,\delta_0)\ni \delta\mapsto \dim\mu_{\delta,\omega}$  are real analytic  in the segment $[0,\delta_0)$. 
If, additionally, the distribution $\nu$ is uniform, then the function
 $[0,\delta_0)\ni \delta\mapsto \dim\mu_{\delta,\omega}$ is constant.

\end{prop*}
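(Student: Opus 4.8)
The plan is to prove the three assertions in turn: first harmonicity of $\lambda\mapsto \mathbf{g}_\lambda(0)$ on the complex disc $\Lambda$, then real analyticity of the restrictions to $[0,\delta_0)$, and finally constancy in the rotationally symmetric case. The key observation is that, since $c_0\notin\mathcal M$, the critical orbit of $f_{c_0}$ escapes to infinity; choosing $\delta_0$ small enough I can make this escape \emph{uniform} over all parameter sequences $(c_0+\lambda\omega_0,c_0+\lambda\omega_1,\dots)$ with $|\lambda|<\delta_0$ and $\omega\in\mathbb D(0,1)^{\mathbb N}$. Concretely, I would fix an escape radius $\rho$ so large that $|z|>\rho$ together with $|c|\le|c_0|+\delta_0$ forces $|z^2+c|>|z|$, and then use continuity over the finitely many iterates needed for $f_{c_0}^{N_0}(0)$ to reach modulus $>\rho$ to produce a uniform time $N_0$ with $|f^{N_0}_{\lambda,\omega}(0)|>\rho$ for all admissible $\lambda,\omega$. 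Writing $u_n:=f^n_{\lambda,\omega}(0)$ and $c_n=c_0+\lambda\omega_n$, I then use the telescoping identity
\[
g_{\lambda,\omega}(0)=\frac{1}{2^{N_0}}\log|u_{N_0}|+\sum_{n=N_0}^{\infty}\frac{1}{2^{n+1}}\log\Bigl|1+\frac{c_n}{u_n^2}\Bigr|.
\]
For fixed $\omega$ each $u_n$ is a polynomial in $\lambda$, nonvanishing on $\Lambda$ since its modulus exceeds $\rho$ for $n\ge N_0$, so $\log|u_{N_0}|$ is harmonic in $\lambda$ and each summand is harmonic as well (the argument of the logarithm stays close to $1$, so the principal branch applies). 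The double-exponential growth of $|u_n|$ makes the series converge uniformly on compact subsets of $\Lambda$, whence $\lambda\mapsto g_{\lambda,\omega}(0)$ is harmonic for every $\omega$.

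\textbf{Step 2 (integration and real analyticity).} Next I pass to $\mathbf{g}_\lambda(0)=\int_\Omega g_{\lambda,\omega}(0)\,d\mathbb P(\omega)$. The uniform estimates of Step 1 give a bound for $g_{\lambda,\omega}(0)$ that is independent of $\omega$ on compacta, so dominated convergence yields continuity of $\mathbf{g}_\lambda(0)$, and Fubini lets me integrate the mean-value property of each $g_{\lambda,\omega}(0)$ over $\omega$. Thus $\mathbf{g}_\lambda(0)$ is a continuous function satisfying the mean-value property, hence harmonic on $\Lambda$. A harmonic function on a planar disc is real analytic, so its restriction to the real axis is real analytic on $(-\delta_0,\delta_0)$ and in particular on $[0,\delta_0)$. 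Composing with the real-analytic map $x\mapsto\frac{\ln 2}{\ln 2+x}$, which is legitimate because $\mathbf{g}_\delta(0)>0$ keeps the denominator away from $0$, and invoking the dimension formula of Theorem~\ref{thm:dim_harm}, I conclude that $\delta\mapsto\dim\mu_{\delta,\omega}$ is real analytic.

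\textbf{Step 3 (constancy for uniform $\nu$).} Here I would exploit rotational symmetry. Let $R_\theta:\Omega\to\Omega$ act coordinatewise by $\omega_i\mapsto e^{i\theta}\omega_i$. A change of variables in the parameter sequence shows $g_{e^{i\theta}\lambda,\omega}(0)=g_{\lambda,R_\theta\omega}(0)$, because replacing $\lambda$ by $e^{i\theta}\lambda$ and $\omega$ by $R_{-\theta}\omega$ leaves every parameter $c_0+\lambda\omega_i$ unchanged. When $\nu$ is uniform on $\mathbb D(0,1)$ the product measure $\mathbb P$ is $R_\theta$-invariant, so integrating gives $\mathbf{g}_{e^{i\theta}\lambda}(0)=\mathbf{g}_\lambda(0)$. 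Thus $\mathbf{g}_\lambda(0)$ is rotation invariant and harmonic on a disc centered at $0$; the mean-value property over circles centered at the origin then forces it to be constant. Consequently $\mathbf{g}_\delta(0)$, and therefore $\dim\mu_{\delta,\omega}$ via the dimension formula, is constant on $[0,\delta_0)$.

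The main obstacle I anticipate is Step 1: securing uniform escape of the critical orbit in both $\lambda$ and $\omega$ and controlling the non-vanishing of the iterates $u_n$, so that the telescoping series is genuinely a uniformly convergent series of harmonic functions on $\Lambda$. Once this is in place, the measure-theoretic interchanges in Step 2 (Fubini, dominated convergence, mean-value property) are routine, and the symmetry argument of Step 3 is immediate.
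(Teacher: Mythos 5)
Your proposal is correct and takes essentially the same route as the paper: uniform escape of the critical orbit (the paper's Lemma~\ref{lem:distance2}) plus a uniformly convergent telescoping series of harmonic terms gives harmonicity of $\lambda\mapsto g_{\lambda,\omega}(0)$ (Proposition~\ref{prop:green_harmonic2}), integration over $\omega$ gives harmonicity of ${\bf g}_\lambda(0)$ and hence real analyticity on $[0,\delta_0)$ (Propositions~\ref{cor:green_harmonic3} and \ref{prop:dim_analytic}), and your rotation-invariance argument for uniform $\nu$ is exactly the paper's proof of Proposition~\ref{prop:stronger}. The only cosmetic difference is that you invoke Theorem~\ref{thm:dim_harm} for the dimension formula, whereas the paper uses its analogue for this hyperbolic perturbed setting, Proposition~\ref{prop:dim_harm}, whose hypotheses are automatic here since the uniform escape makes the system trivially fast escaping.
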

The last statement  says that introducing the randomness has no influence on the dimension of the harmonic measure on a (typical) random Julia set, as long as the range of the random choice of  the parameter $c$ remains bounded by some positive constant $\delta_0$ (depending on the initial parameter $c_0$). The dimension is the same as  for the unperturbed autonomous iteration of $f_{c_0}$.

\medskip





\vskip 20pt

\noindent {\bf Notation.} For every $r>0$ denote $\mathbb D_r:=\mathbb D(0,r)$ and
$\mathbb D_{r}^* := \mathbb{C} \setminus \overline{\mathbb D}_{r}$.




We denote by $\dim_H$  Hausdorff dimension of a set. Similarly,  for a Borel probability measure $\mu$ we denote by $\dim_H(\mu)$  Hausdorff dimension of the measure $\mu$, i.e.,
$$\dim_H(\mu)=\inf\{\dim_H(A): A-\text { Borel sets of positive measure} ~  \mu\}$$


\subsection{Green's function and equilibrium measure.}\label{sec:green}
Our aim is to study the equilibrium (harmonic) measure on non-autonomus Julia sets $J_\omega$, where $\omega\in\mathbb D(0,R)^\mathbb N$.


We recall the proposition proved in  \cite{FS}, which we state in a slightly different form. See also \cite{LZ} for more details.

\begin{prop}\label{prop:def_green}

For every $\omega\in\mathbb D(0,R)^\mathbb N$ the following limit exists: $g_\omega:\mathcal A_\omega\to \mathbb R$:

\begin{equation}\label{eq:green}
g_\omega(z)= \lim_{n\to\infty} \frac{1}{2^n}\ln |f^n_\omega(z)|.
\end{equation}
 The function $z\mapsto g_\omega(z)$ is the Green function on $\mathcal A_\omega$ with pole at infinity. Each set $\mathcal A_\omega$ is regular for  Dirichlet problem.
 Putting $g_\omega\equiv 0$ on the complement of $\mathcal A_\omega$, $g_\omega$ extends continuously to the whole plane. 
 \end{prop}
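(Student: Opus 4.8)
The plan is to realize $g_\omega$ simultaneously as the uniform limit of explicit continuous subharmonic functions, so that existence of the limit in \eqref{eq:green}, continuity of the extension, and vanishing on $K_\omega$ all drop out at once. First I fix an escape radius: choose $\rho=R+1$, so that $\rho>1$, $R<\rho^2$ and $\rho^2-\rho-R>0$; then for every $c$ with $|c|\le R$ and every $w$ with $|w|>\rho$ one has $|w^2+c|\ge|w|^2-R>\rho$. Consequently $\mathbb D_\rho^*\subseteq\mathcal A_\omega$ and $K_\omega\subseteq\overline{\mathbb D}_\rho$; in particular $\mathcal A_\omega$ is open (an orbit reaching modulus $>\rho$ forces, by continuity of the polynomial $f^N_\omega$, a whole neighbourhood to escape) and $K_\omega$ is compact. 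For $n\ge0$ set
$$u_n(z)=\frac{1}{2^n}\ln^+\frac{|f^n_\omega(z)|}{\rho},\qquad \ln^+t:=\max(\ln t,0).$$
Each $u_n$ is continuous, non-negative and subharmonic (the composition of the subharmonic $\max(\ln|\cdot|-\ln\rho,0)$ with the holomorphic $f^n_\omega$), and $u_n\equiv0$ on $K_\omega$ since there $|f^n_\omega|\le\rho$.

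The heart of the argument is a uniform Cauchy estimate. Writing $w=f^n_\omega(z)$ and $w'=w^2+c_n=f^{n+1}_\omega(z)$, I would bound
$$\Big|\ln^+\tfrac{|w'|}{\rho}-2\ln^+\tfrac{|w|}{\rho}\Big|\le C$$
by a constant $C=C(R)$ independent of $z$, of $n$, and of $c_n$ with $|c_n|\le R$, splitting into the cases $|w|\le\rho$ and $|w|>\rho$. In the first case the left $\ln^+$ vanishes and $|w'|\le\rho^2+R$ bounds the other term; in the second both terms are active and the difference equals $\ln\rho+\ln|1+c_n/w^2|$, which is bounded because $|c_n/w^2|\le R/\rho^2<1$. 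This gives $\|u_{n+1}-u_n\|_\infty\le C\,2^{-(n+1)}$, hence $\sum_n\|u_{n+1}-u_n\|_\infty<\infty$ and $(u_n)$ converges \emph{uniformly on $\mathbb C$} to a continuous function $g$. On $\mathcal A_\omega$ the orbit eventually exceeds $\rho$, so for large $n$ one has $u_n(z)=2^{-n}\ln|f^n_\omega(z)|-2^{-n}\ln\rho$; since $u_n(z)$ converges and $2^{-n}\ln\rho\to0$, the limit in \eqref{eq:green} exists and equals $g(z)=g_\omega(z)$. On $K_\omega$ we get $g\equiv0$. Thus $g$ is precisely $g_\omega$ extended by $0$, and it is continuous on the whole plane.

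It then remains to verify the named properties. Harmonicity on $\mathcal A_\omega$ is local: near any $z_0\in\mathcal A_\omega$ there are a neighbourhood $U$ and an $N$ with $|f^n_\omega|>\rho$ on $U$ for all $n\ge N$, so each $u_n$ with $n\ge N$ is harmonic on $U$ and the uniform limit $g_\omega$ is harmonic there. For positivity I reuse the case analysis: when $|w|>\rho$ one computes $u_{n+1}-u_n=2^{-(n+1)}\big(\ln\rho+\ln|1+c_n/w^2|\big)>0$, since $\rho^2-\rho-R>0$ gives $\rho(1-R/\rho^2)>1$; hence once the orbit enters $\mathbb D_\rho^*$ at some time $N$ the sequence $(u_n)_{n\ge N}$ is strictly increasing and $g_\omega(z)\ge u_N(z)>0$. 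For the pole at infinity, for $|z|>\rho$ the telescoping $g_\omega(z)=\ln|z|+\sum_{n\ge0}2^{-(n+1)}\ln|1+c_n/z_n^2|$ with $|z_n|\ge|z|$ yields $\bigl|g_\omega(z)-\ln|z|\bigr|\le 2R|z|^{-2}\to0$, so $g_\omega(z)=\ln|z|+o(1)$ at $\infty$. Together these identify $g_\omega$ as the Green function of $\mathcal A_\omega$ with pole at infinity (uniqueness being standard once regularity is known). Finally, Dirichlet regularity is immediate from the continuous extension: every $\zeta\in\partial\mathcal A_\omega=J_\omega$ satisfies $g_\omega(z)\to g_\omega(\zeta)=0$ as $z\to\zeta$, which is exactly the regularity of $\zeta$.

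The main obstacle is the uniform estimate $\|u_{n+1}-u_n\|_\infty\le C\,2^{-(n+1)}$: one must track the $\ln^+$ truncation carefully through the transition region $|w|\approx\rho$ and check that the constant is genuinely uniform over all admissible $c_n$ and all $w\in\mathbb C$. Everything else—harmonicity, positivity, the asymptotics at infinity and regularity—follows routinely once this estimate is in hand.
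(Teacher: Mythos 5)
Your proof is correct, but note that the paper itself contains no proof of this proposition: it is quoted from Forn{\ae}ss--Sibony \cite{FS} (see also \cite{LZ}), and the only ingredient the paper records on its own is Proposition~\ref{prop:log}, namely an $R_0$ with $f_\omega(\mathbb D^*_{R_0})\subset\mathbb D^*_{2R_0}$ and $\bigl|g_\omega(z)-\ln|z|\bigr|<1$ on $\mathbb D^*_{R_0}$. The route behind the citation uses the same telescoping bound on $2^{-(n+1)}\ln\bigl|1+c_n/(f^n_\omega)^2\bigr|$ that you prove, but only on a neighbourhood of infinity, where it yields uniform convergence of $2^{-n}\ln|f^n_\omega|$; the function is then propagated through the rest of $\mathcal A_\omega$ via the relation $g_{\sigma\omega}\circ f_\omega=2g_\omega$, and the continuous extension by zero and Dirichlet regularity are treated as a separate boundary estimate (H\"older continuity of $g_\omega$, which the paper invokes later in a Remark in Section~\ref{sec:formula}). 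Your packaging with the truncated functions $u_n=2^{-n}\ln^+\bigl(|f^n_\omega|/\rho\bigr)$ is a genuine streamlining: the single uniform estimate $\|u_{n+1}-u_n\|_\infty\le C\,2^{-(n+1)}$ holds on all of $\mathbb C$, so existence of the limit \eqref{eq:green}, global continuity of the extension, vanishing on $K_\omega$, and regularity of every boundary point all fall out of one argument, at the negligible cost of the case analysis across the truncation level $|w|=\rho$; what it does not give directly is the quantitative H\"older modulus that \cite{FS} obtains, but the proposition does not ask for it.

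Two small points you should make explicit to close the argument. First, your displayed inequality $|w^2+c|\ge|w|^2-R>\rho$ only proves forward invariance of $\mathbb D^*_\rho$, whereas $\mathbb D^*_\rho\subset\mathcal A_\omega$ requires actual escape; this is a one-line fix, e.g. $|w_{n+1}|-|w_n|\ge|w_n|^2-|w_n|-R\ge\rho^2-\rho-R=R^2>0$, so $|w_n|\to\infty$ (and in fact this computation is already implicit in your positivity step). Second, to speak of \emph{the} Green function of $\mathcal A_\omega$ with pole at infinity you should note that $\mathcal A_\omega$ is connected: if $U$ were a bounded component of $\mathbb C\setminus K_\omega$, then $\partial U\subset K_\omega$, where $\sup_n|f^n_\omega|\le\rho$, so by the maximum principle $|f^n_\omega|\le\rho$ on $U$ for all $n$, forcing $U\subset K_\omega$, a contradiction. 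With these additions your uniqueness appeal (positive harmonic function on $\mathcal A_\omega$, boundary values $0$, $\ln|z|+o(1)$ at infinity) is complete and the proof stands.
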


 This is a generalization of a well- known formula for the autonomous case: for the map $f_c(z):=z^2+c$ and its  basin of infinity $\mathcal A_c$, Green function with a pole at infinity is given by:
 
$$g_c(z)= \lim_{n\to\infty} \frac{1}{2^n}\ln |f^n_c(z)|.$$

\begin{corollary}
We have 
\begin{equation}\label{eq:invariance}
g_{\sigma\omega}(f_\omega(z))=2 g_\omega(z).
\end{equation}
\end{corollary}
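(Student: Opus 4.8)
The plan is to reduce \eqref{eq:invariance} to an elementary cocycle identity for the non-autonomous iterates and then simply pass to the limit in the definition \eqref{eq:green} of the Green function. First I would unwind the definitions of $f^n_\omega$ and $\sigma$. Since $\sigma\omega=(c_1,c_2,\dots)$ and $f_\omega=f_{c_0}$, the formula $f^n_\omega=f_{c_{n-1}}\circ\dots\circ f_{c_0}$ gives, for every $n\in\mathbb{N}$,
$$f^n_{\sigma\omega}\circ f_\omega = \left(f_{c_n}\circ\dots\circ f_{c_1}\right)\circ f_{c_0}=f_{c_n}\circ\dots\circ f_{c_0}=f^{n+1}_\omega,$$
an identity of maps valid on all of $\mathbb{C}$. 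This is the only structural input needed.

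Next I would restrict attention to $z\in\mathcal A_\omega$, where both sides of \eqref{eq:invariance} are genuinely defined. By Proposition~\ref{prop:est_julia} we have $\mathcal A_{\sigma\omega}=f_\omega(\mathcal A_\omega)$, so $f_\omega(z)\in\mathcal A_{\sigma\omega}$ and $g_{\sigma\omega}(f_\omega(z))$ is meaningful. Substituting the cocycle identity into \eqref{eq:green} yields
$$g_{\sigma\omega}(f_\omega(z))=\lim_{n\to\infty}\frac{1}{2^n}\ln|f^n_{\sigma\omega}(f_\omega(z))|=\lim_{n\to\infty}\frac{1}{2^n}\ln|f^{n+1}_\omega(z)|.$$
Writing $\frac{1}{2^n}=2\cdot\frac{1}{2^{n+1}}$ and reindexing by $m=n+1$, the right-hand limit becomes $2\lim_{m\to\infty}\frac{1}{2^m}\ln|f^m_\omega(z)|=2\,g_\omega(z)$. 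The existence of this limit, hence the legitimacy of the reindexing, is guaranteed by Proposition~\ref{prop:def_green}, so \eqref{eq:invariance} holds on $\mathcal A_\omega$.

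Finally I would dispose of the complement. On $\mathbb{C}\setminus\mathcal A_\omega$ the extended function $g_\omega$ vanishes by Proposition~\ref{prop:def_green}. Moreover, the cocycle identity shows that $f^{n+1}_\omega(z)\to\infty$ if and only if $f^n_{\sigma\omega}(f_\omega(z))\to\infty$, i.e. $f_\omega(z)\in\mathcal A_{\sigma\omega}$ exactly when $z\in\mathcal A_\omega$; hence $z\notin\mathcal A_\omega$ forces $f_\omega(z)\notin\mathcal A_{\sigma\omega}$ and $g_{\sigma\omega}(f_\omega(z))=0$, so both sides vanish and the identity is trivial there. I do not expect any genuine obstacle: the entire content is the algebraic cocycle relation together with an index shift, and no analytic difficulty arises since convergence of the defining limits is already supplied by Proposition~\ref{prop:def_green}. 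The only point demanding care is the bookkeeping of domains, which is handled cleanly by matching $\mathcal A_{\sigma\omega}=f_\omega(\mathcal A_\omega)$ from Proposition~\ref{prop:est_julia}.
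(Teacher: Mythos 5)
Your proof is correct and follows exactly the paper's route: the paper's own proof simply states that the identity follows directly from the limit formula \eqref{eq:green} defining the Green function, which is precisely the cocycle-plus-reindexing argument you spell out. Your additional care with domains (using $\mathcal A_{\sigma\omega}=f_\omega(\mathcal A_\omega)$ and the zero extension off the basin) is just an explicit rendering of what the paper leaves implicit.
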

\begin{proof}
This follows directly from the formula \eqref{eq:green}, defining the Green function $g_\omega$.
\end{proof}

The measure $\mu_\omega=\frac{1}{2\pi}\Delta g_\omega$ (i.e., the harmonic measure for the domain $\mathcal A_\omega$, evaluated at infinity)
is supported on the Julia set $J_\omega$, and, since $g_{\sigma\omega}\circ f_\omega=2g_\omega$, it  satisfies

\begin{equation}\label{eq:maximal}
\mu_{\sigma \omega}(f_\omega(A))=2\mu_\omega(A)
\end{equation}

if $A$ is a Borel set such that $f_\omega$ is one-to-one on $A$.

We call this measure also the  maximal measure since, analogously to the autonomous case, $\mu_\omega$ is a fixed point of the (non- autonomous) transfer operator.
In order to explain this fact in more detail, let us start 
with  the following simple fact. Its proof is an easy calculation (see Propostion 5 in \cite{LZ}).
\begin{prop} \label{prop:log}
For every  $R>0$  there exists $R_0>0$ such that for every $\omega\in\mathbb D(0,R)^{\mathbb N}$ we have that
\begin{equation}\label{eq:1}
f_\omega(\mathbb D_{R_0}^*)\subset \mathbb D^*_{2R_0},
\end{equation}
and
\begin{equation}\label{eq:est_green}
|g_\omega(z)-\ln|z||<1
\end{equation}
for all $z\in\mathbb D_{R_0}^*$.
The inclusion \eqref{eq:1} implies that 
\begin{equation}\label{eq:2}
\mathbb D_{R_0}^*\subset \mathcal A_\omega.
\end{equation}
By the Maximum Principle,  the inequality \eqref{eq:est_green} implies that
\begin{equation}\label{eq:est_green_above}
g_\omega(z)\le \log R_0+1
\end{equation}
for all $z\in \mathbb D_{R_0}$ and all $\omega\in\mathbb D(0,R)^\mathbb N$.
\end{prop}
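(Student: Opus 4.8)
The plan is to fix, for the given $R$, a single large radius $R_0=R_0(R)$ that forces every point outside $\mathbb D_{R_0}$ to escape to infinity at a geometric rate, uniformly over all admissible sequences $\omega$, and then to deduce the four assertions from this escape. First I would record the elementary bound: for $|z|>R_0$ and any $|c|<R$ one has $|z^2+c|\ge |z|^2-R\ge |z|\,(R_0-R/R_0)$, using that $t\mapsto t-R/t$ is increasing. Choosing $R_0$ so large that $R_0-R/R_0\ge 2$ immediately yields $|f_\omega(z)|\ge 2|z|>2R_0$, which is \eqref{eq:1}, and, upon iterating, $|f_\omega^n(z)|\ge 2^n|z|\to\infty$; since every coordinate $c_k$ satisfies the same bound $|c_k|<R$, the constant is independent of $\omega$. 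This monotone escape gives $\mathbb D_{R_0}^*\subset\mathcal A_\omega$, which is \eqref{eq:2}.

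For the Green estimate \eqref{eq:est_green} I would use the telescoping identity obtained from \eqref{eq:green}. Writing $z_k=f_\omega^k(z)$ and using $z_{k+1}=z_k^2+c_k$, one has
\[
\frac{1}{2^n}\ln|z_n|-\ln|z|=\sum_{k=0}^{n-1}\frac{1}{2^{k+1}}\ln\left|1+\frac{c_k}{z_k^2}\right|.
\]
On $\mathbb D_{R_0}^*$ the escape estimate gives $|z_k|\ge 2^k R_0$, so $|c_k/z_k^2|\le R/(2^{2k}R_0^2)<1/2$ for $R_0$ large, whence $\bigl|\ln|1+c_k/z_k^2|\bigr|\le 2|c_k/z_k^2|$ and each summand is bounded in absolute value by $2R/(2^{k+1}2^{2k}R_0^2)$. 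Summing and letting $n\to\infty$ shows both that the limit defining $g_\omega$ exists on $\mathbb D_{R_0}^*$ and that $|g_\omega(z)-\ln|z||\le C\,R/R_0^2$ with an absolute constant $C$ (one may take $C=8/7$); enlarging $R_0$ if necessary makes this $<1$, proving \eqref{eq:est_green}. The point to stress is that the resulting bound is uniform in $z\in\mathbb D_{R_0}^*$ and in $\omega$, since no per-step constant depends on the sequence.

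Finally, for \eqref{eq:est_green_above} I would invoke the maximum principle on the domain $\mathbb D_{R_0}\cap\mathcal A_\omega$, on which $g_\omega$ is harmonic by Proposition~\ref{prop:def_green}. Its boundary is contained in $\partial\mathbb D_{R_0}\cup J_\omega$: on $J_\omega$ we have $g_\omega=0$, while on $\{|z|=R_0\}$ continuity of $g_\omega$ together with \eqref{eq:est_green} gives $g_\omega\le\ln R_0+1$. Hence $g_\omega\le\ln R_0+1$ throughout $\mathbb D_{R_0}\cap\mathcal A_\omega$, and the bound holds trivially where $g_\omega\equiv 0$, giving \eqref{eq:est_green_above} for every $\omega$. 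The only point requiring genuine care—rather than a true obstacle—is uniformity: one must verify that a single $R_0=R_0(R)$ serves for all $\omega\in\mathbb D(0,R)^{\mathbb N}$ and all exterior points simultaneously, which is exactly what the common per-step bound $|c_k|<R$ provides.
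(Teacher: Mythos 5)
Your proof is correct: the uniform per-step escape bound $|z^2+c|\ge|z|\,(R_0-R/R_0)\ge 2|z|$, the telescoping series for $\tfrac{1}{2^n}\ln|z_n|-\ln|z|$ with the $8R/(7R_0^2)$ tail estimate, and the maximum-principle step on $\mathbb D_{R_0}\cap\mathcal A_\omega$ all go through with constants depending only on $R$, exactly as needed. The paper gives no proof of its own here, deferring to Proposition 5 of \cite{LZ} as ``an easy calculation,'' and your argument is precisely that standard calculation, so the approaches coincide.
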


\

So, now, we choose $R_0$ according to the statement of Proposition~\ref{prop:log}. Denote by $C_{R_0}$ 
the space of continuous functions in $\overline{\mathbb D}_{R_0}$.

Consider  the family of  operators  $\mathcal L_\omega:C_{R_0}\to C_{R_0}$ defined as
$$\mathcal L_\omega(\varphi)(w)=\frac{1}{2}\sum_{z\in f_\omega^{-1}(w)}\varphi(z),$$ where $\varphi\in C_{R_0}$ and the preimages are counted with multiplicity.
Because of the choice of $R_0$, each operator $\mathcal L_\omega$ acts continuously  on the space $C_{R_0}$. 

\

Let $\mathcal L^*_\omega$ be the conjugate operator. Thus, $\mathcal L^*$ acts on the conjugate space $C_{R_0}^*$, i.e., the space of signed finite Borel measures on 
$\overline{\mathbb  D}_{R_0}$.

Notice that  $\mathcal L_\omega^*(\mu_{\sigma\omega})=\mu_\omega$.
Indeed, let $\varphi\in C_{R_0}$. Then

$$\mathcal L^*_\omega\mu_{\sigma\omega}(\varphi)=\mu_{\sigma\omega}(\mathcal L_\omega(\varphi))=\mu_\omega(\varphi)$$
by the equation \eqref{eq:maximal}.

The equation \eqref{eq:maximal} implies also  that the measure $\{\mu_\omega\}$ is ''fiberwise invariant":

\begin{equation}\label{eq:invariance2}
\mu_{\sigma\omega}=\mu_\omega\circ (f_\omega)^{-1}
\end{equation}

\

The following proposition says that the maximal measure can be expressed in terms of iterates  of the (non- autonomous) transfer operator. This is a non- autonomous version of the classical fact in the iteration of polynomials, which was first observed by Brolin, see \cite{brolin}, Theorem 16.1. 
The proof of the   non- autonomous version can be found in \cite{bruck}, Theorem 8.5.

\begin{prop}\label{prop:max_measure2}
For 
an arbitrary point $z_0\in \mathbb D_{R_0}^*$ the following holds:
$$\mu_{\omega}=\lim_{n\to\infty}\frac{1}{2^n}\sum_{y\in (f^n_{\omega})^{-1}(z_0)}\delta_y.$$
weakly, i.e., 
for every continuous function $\varphi$ defined on a neighbourhood of $J_{\omega}$,

$$\mu_{\omega}(\varphi)=\lim_{n\to\infty}\frac{1}{2^n}\sum_{y\in (f^n_{\omega})^{-1}(z_0)}\varphi(y).$$

In other words,

$$\mu_\omega=\lim_{n\to\infty}({\mathcal L^{*,n}_\omega})(\delta_{z_0}),$$
where we denoted by $({\mathcal L^{*,n}_\omega})$ the composition of the operators:
$$\mathcal L^{*,n}_\omega=\mathcal L^*_\omega\circ \mathcal L^*_{\sigma\omega}\circ\dots\circ\mathcal L^*_{\sigma^{n-1}\omega}$$

\end{prop}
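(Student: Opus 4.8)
The plan is to run the classical potential-theoretic argument of Brolin in the non-autonomous setting, passing from the measures $\nu_n:=\mathcal L^{*,n}_\omega(\delta_{z_0})$ to their logarithmic potentials and comparing with the Green function $g_\omega$. First I would check that the two displayed formulas describe the same object. Testing $\nu_n$ against $\varphi\in C_{R_0}$ and unfolding the adjoints in the definition of $\mathcal L^{*,n}_\omega$ gives $\nu_n(\varphi)=\bigl(\mathcal L_{\sigma^{n-1}\omega}\circ\cdots\circ\mathcal L_\omega\bigr)(\varphi)(z_0)$. An easy induction on the definition of $\mathcal L_\omega$, using $f^n_\omega=f_{\sigma^{n-1}\omega}\circ\cdots\circ f_\omega$, shows
\[
\bigl(\mathcal L_{\sigma^{n-1}\omega}\circ\cdots\circ\mathcal L_\omega\bigr)(\varphi)(w)=\frac{1}{2^n}\sum_{y\in(f^n_\omega)^{-1}(w)}\varphi(y),
\]
the preimages counted with multiplicity. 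Hence $\nu_n=\frac{1}{2^n}\sum_{y\in(f^n_\omega)^{-1}(z_0)}\delta_y$, and it remains to prove $\nu_n\to\mu_\omega$ weakly.

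Next I would introduce the potentials. Since each $f^n_\omega$ is a monic polynomial of degree $2^n$, one has the factorization $f^n_\omega(z)-z_0=\prod_{y\in(f^n_\omega)^{-1}(z_0)}(z-y)$, so the logarithmic potential of $\nu_n$ is
\[
p_n(z):=\int \ln|z-y|\,d\nu_n(y)=\frac{1}{2^n}\ln\bigl|f^n_\omega(z)-z_0\bigr|.
\]
I would then show $p_n\to g_\omega$ pointwise. On $\mathcal A_\omega$ we have $|f^n_\omega(z)|\to\infty$, so comparing with \eqref{eq:green} yields $p_n(z)\to g_\omega(z)$; on $K_\omega\subset\overline{\mathbb D}_{R_0}$ the iterates $f^n_\omega(z)$ stay in $\overline{\mathbb D}_{R_0}$ while $z_0\in\mathbb D_{R_0}^*$, so $|f^n_\omega(z)-z_0|$ is bounded above and bounded below away from $0$, giving $p_n(z)\to 0=g_\omega(z)$. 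Iterating the invariance \eqref{eq:invariance} gives $g_{\sigma^n\omega}(f^n_\omega(y))=2^n g_\omega(y)$, so for $y\in(f^n_\omega)^{-1}(z_0)$ one has $g_\omega(y)=2^{-n}g_{\sigma^n\omega}(z_0)\to 0$, which by \eqref{eq:est_green} is uniform in $y$ and $n$; thus all $\nu_n$ are supported in a fixed compact sublevel set $\{g_\omega\le M\}$.

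Then I would upgrade this to convergence of measures. The functions $p_n$ are subharmonic on $\mathbb C$ and, being $2^{-n}\log$ of the modulus of a polynomial, are locally uniformly bounded above; together with the pointwise convergence to the subharmonic limit $g_\omega$, the standard convergence theory of subharmonic functions forces $p_n\to g_\omega$ in $L^1_{\mathrm{loc}}$, hence in the sense of distributions. Applying the Laplacian (continuous on $\mathcal D'$) and recalling $\frac{1}{2\pi}\Delta p_n=\nu_n$ and $\frac{1}{2\pi}\Delta g_\omega=\mu_\omega$, we obtain $\nu_n\to\mu_\omega$ in $\mathcal D'$. Finally, since all $\nu_n$ and $\mu_\omega$ are probability measures supported in the fixed compact set $\{g_\omega\le M\}$, the family is tight; any weak limit of a subsequence must, by testing against $C_c^\infty$ functions, coincide with $\mu_\omega$, so the full sequence converges weakly to $\mu_\omega$, which is the assertion.

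The main obstacle is the third step: promoting pointwise convergence of the subharmonic potentials $p_n$ into $L^1_{\mathrm{loc}}$ convergence. The delicate region is a neighbourhood of $J_\omega$, where the convergence $p_n\to g_\omega$ is not uniform and where the factor $\ln|f^n_\omega(z)-z_0|$ is driven to $-\infty$ at each of the $2^n$ preimages. Controlling this requires only the potential theory of subharmonic functions (local upper bounds, together with the regularity of $\mathcal A_\omega$ for the Dirichlet problem from Proposition~\ref{prop:def_green}) and no further dynamical input; everything else is the bookkeeping of the non-autonomous transfer operator already assembled in the excerpt.
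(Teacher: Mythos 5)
You should first note that the paper does not prove Proposition~\ref{prop:max_measure2} at all: it is imported from the literature, with the autonomous case credited to Brolin (\cite{brolin}, Theorem 16.1) and the non-autonomous version to Br\"uck (\cite{bruck}, Theorem 8.5). Your proposal is therefore a self-contained reconstruction rather than an alternative to anything in the text, and it follows the standard Brolin-type potential-theoretic route that the cited sources also use: unfold the adjoints to identify $\nu_n$ with the equidistributed measure on $(f^n_\omega)^{-1}(z_0)$, write its potential as $p_n=2^{-n}\ln|f^n_\omega(\cdot)-z_0|$ (valid since $f^n_\omega$ is monic of degree $2^n$), prove pointwise convergence $p_n\to g_\omega$ separately on $\mathcal A_\omega$ and on $K_\omega$, upgrade to $L^1_{\mathrm{loc}}$ convergence, and take distributional Laplacians; this meshes exactly with the paper's definition $\mu_\omega=\frac{1}{2\pi}\Delta g_\omega$. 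The argument is correct, but three steps should be written out. (i) The locally uniform upper bound on $p_n$ does not follow merely from $p_n$ being $2^{-n}\log$ of a polynomial modulus; it needs the bound $|f^n_\omega(z)|\le C_M^{2^n}$ on $\{|z|\le M\}$, an easy induction using $|c_i|<R$. (ii) The upgrade to $L^1_{\mathrm{loc}}$ is a subsequence argument: by the compactness theorem for subharmonic functions, every subsequence of $(p_n)$ has a further subsequence converging in $L^1_{\mathrm{loc}}$ to a subharmonic function, which converges a.e.\ along a further subsequence and hence equals $g_\omega$ a.e.; since $g_\omega$ (extended by $0$: continuous, nonnegative, harmonic on $\mathcal A_\omega$) is itself subharmonic, and two subharmonic functions equal a.e.\ coincide everywhere, the full sequence converges to $g_\omega$. (iii) The proposition tests against functions defined only on a neighbourhood $U$ of $J_\omega$, so tightness within $\{g_\omega\le M\}$ is not quite enough; but your identity $g_\omega(y)=2^{-n}g_{\sigma^n\omega}(z_0)\le 2^{-n}(\ln|z_0|+1)$ places the support of $\nu_n$ in $\{0<g_\omega\le 2^{-n}C\}$, and these sets are eventually contained in any such $U$ (a limit point outside $U$ would have $g_\omega=0$, hence lie in the interior of $K_\omega$, yet be a limit of points of $\mathcal A_\omega$, a contradiction), after which a cutoff reduces the claim to the global weak convergence you established. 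With these routine completions your proof is sound, and more informative than the citation the paper offers.
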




\section{Introduction, part 2. Bounded random systems  of quadratic polynomials.}\label{sec:random_green}
We now introduce formally random iterations of quadratic polynomials. So, now we  consider non- autonomous iterations, where the choice of consecutive parameteres $c_n$ is governed by some 
probability distribution over some probability space. Below is the formal setting.

As in \cite{Arnold}, the randomness is modeled by a measure preserving dynamical system $(\Omega,\mathcal F,m,\sigma)$
where $(\Omega,\mathcal F,m)$ is a complete probability space and  $\sigma:\Omega\to\Omega$ -- an invertible measure preserving ergodic transformation.
To every $\omega\in\Omega$ 
we associate in a measurable way a  parameter $c=c(\omega)\in\mathbb C$, and consequently, the map denoted in the sequel by $f_\omega$ is defined as 

$$f_\omega:=f_{c(\omega)}.$$

This gives a natural way of defining random iterates: for $\omega\in\Omega$ put 
\begin{equation}\label{eq:random_iterate}
f^n_\omega=f_{\sigma^{n-1}\omega}\circ\dots\circ f_\omega.
\end{equation}

The basin of infinity $\mathcal A_\omega$, filled-in Julia set $K_\omega$ and the Julia set $J_\omega$ are defined in exactly the same way as in Section~\ref{sec:intro}.

In the sequel we shall always assume that the function $\omega\mapsto c(\omega)$ is bounded, so that there exists $R>0$ such that 
$c(\omega)\in\mathbb D(0,R)$ for every $\omega\in\Omega$. 
Then, for every $\omega\in\Omega$  the facts collected in Section~\ref{sec:intro} apply.  

Such a random system will be referred to as a \emph{bounded random system of quadratic maps}. So, the formal definition is the following.

\begin{definition}\label{def:bounded_random}
A bounded random system of quadratic maps   $\mathcal S$ is formed by a complete probability space 
$(\Omega, \mathcal F,\mathbb P)$ together with an ergodic $\mathbb P$-measurable measure preserving automorphism $\sigma:\Omega\to\Omega$, a positive constant $R>0$
and a measurable map $\omega\mapsto c(\omega)\in \mathbb D(0,R)$.
\end{definition}
\

In this random setting Proposition~\ref{prop:def_green} takes on the form:
\begin{prop}\label{prop:def_green2}

Let $\mathcal S$ be a bounded random system of quadratic maps. For every $\omega\in\Omega$ the following limit exists: $g_\omega:\mathcal A_\omega\to \mathbb R$:

\begin{equation}\label{eq:green2}
g_\omega(z)= \lim_{n\to\infty} \frac{1}{2^n}\ln |f^n_\omega(z)|.
\end{equation}
 The function $z\mapsto g_\omega(z)$ is the Green function on $\mathcal A_\omega$ with pole at infinity. Each set $\mathcal A_\omega$ is regular for  Dirichlet problem.
 Putting $g_\omega\equiv 0$ on the complement of $\mathcal A_\infty$, $g_\omega$ extends continuously to the whole plane. 
 With $z$ fixed, the function $\omega\mapsto g_\omega(z)$ is $\mathbb P$-- measurable.
 Given $R_0>0$, the functions $z\mapsto g_\omega(z)$ are uniformly bounded on $\mathbb D_{R_0}$.
 \end{prop}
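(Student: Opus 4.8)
The plan is to establish Proposition~\ref{prop:def_green2} by reducing each assertion to the corresponding deterministic fact already recorded in Proposition~\ref{prop:def_green}, and then addressing only the two genuinely new claims: $\mathbb{P}$--measurability in $\omega$ and uniform boundedness on $\mathbb{D}_{R_0}$. The key observation is that a bounded random system, by Definition~\ref{def:bounded_random}, satisfies $c(\omega)\in\mathbb{D}(0,R)$ for every $\omega$, so for each fixed $\omega$ the sequence of parameters $(c(\sigma^k\omega))_{k\geq 0}$ is an element of $\mathbb{D}(0,R)^{\mathbb{N}}$. Hence the existence of the limit in \eqref{eq:green2}, the fact that $g_\omega$ is the Green function with pole at infinity, regularity of $\mathcal{A}_\omega$ for the Dirichlet problem, and continuous extension by zero across the complement are all immediate pointwise-in-$\omega$ consequences of Proposition~\ref{prop:def_green}. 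No new work is required for these.

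For the measurability claim I would fix $z$ and argue that $\omega\mapsto g_\omega(z)$ is a pointwise limit of measurable functions, hence measurable. Concretely, each map $\omega\mapsto f^n_\omega(z)$ is measurable: by \eqref{eq:random_iterate} it is built from finitely many compositions of the maps $f_{\sigma^k\omega}(w)=w^2+c(\sigma^k\omega)$, and since $\omega\mapsto c(\omega)$ is measurable and $\sigma$ is measurable, each $\omega\mapsto c(\sigma^k\omega)$ is measurable; composition and the algebraic operations preserve measurability in the $z$--variable being held fixed. Therefore $\omega\mapsto \tfrac{1}{2^n}\ln|f^n_\omega(z)|$ is measurable for each $n$ (on the set where the argument is nonzero, and one handles the escaping set where eventually $|f^n_\omega(z)|$ is large), and Proposition~\ref{prop:def_green2} guarantees the limit exists pointwise, so $\omega\mapsto g_\omega(z)$ is measurable as a limit of measurable functions. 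One must be slightly careful about the domain: for $z\in\mathcal{A}_\omega$ the limit is the Green value, while for $z$ in the complement it is $0$; since we define $g_\omega\equiv 0$ off $\mathcal{A}_\omega$, the extended function is still a pointwise limit of the measurable approximants on the escaping set and constant elsewhere, preserving measurability.

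For uniform boundedness on $\mathbb{D}_{R_0}$, this is where I would invoke Proposition~\ref{prop:log} directly. Given $R$ (the bound from the system) choose $R_0$ as in that proposition. The estimate \eqref{eq:est_green_above} states precisely that $g_\omega(z)\leq \log R_0 + 1$ for all $z\in\mathbb{D}_{R_0}$ and all $\omega\in\mathbb{D}(0,R)^{\mathbb{N}}$; since every $\omega$ in our random system induces such a parameter sequence, the same bound $\log R_0+1$ holds uniformly in $\omega\in\Omega$. Combined with $g_\omega\geq 0$ (the Green function is nonnegative), this yields the desired uniform bound. I would state the bound as an explicit constant depending only on $R_0$, emphasizing that the uniformity over $\omega$ is inherited verbatim from the deterministic Proposition~\ref{prop:log}, whose conclusion is already uniform over all parameter sequences in $\mathbb{D}(0,R)^{\mathbb{N}}$.

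The main obstacle, if any, is purely bookkeeping around measurability on the boundary between $\mathcal{A}_\omega$ and its complement, since the defining limit \eqref{eq:green2} is a priori only meaningful on $\mathcal{A}_\omega$. The cleanest way to sidestep this is to note that the approximants $\varphi_n(\omega):=\tfrac{1}{2^n}\max(\ln|f^n_\omega(z)|,0)$ are measurable in $\omega$ and converge pointwise to the extended $g_\omega(z)$ everywhere (they tend to $0$ on the complement because the orbit stays bounded there, and to the Green value on $\mathcal{A}_\omega$), so measurability of the extension follows without needing to treat the two regions separately. Everything else is a direct citation of the deterministic results already proved in the excerpt.
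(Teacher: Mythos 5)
Your proposal is correct, and it matches the paper's treatment: the paper gives no separate proof of Proposition~\ref{prop:def_green2}, presenting it as the random-setting restatement of Proposition~\ref{prop:def_green}, so the content of a proof is exactly your reduction (pointwise in $\omega$, since each $(c(\sigma^k\omega))_k$ lies in $\mathbb{D}(0,R)^{\mathbb{N}}$) plus the routine verification of measurability and uniform boundedness, which you supply soundly via the truncated approximants $\varphi_n(\omega)=\tfrac{1}{2^n}\max(\ln|f^n_\omega(z)|,0)$ (these indeed converge to the extended $g_\omega(z)$ everywhere, since by \eqref{eq:2} a non-escaping orbit stays in $\overline{\mathbb{D}}_{R_0}$, and the truncation also neutralizes the singularity when some $f^n_\omega(z)=0$) and via \eqref{eq:est_green_above}. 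Two cosmetic points: midway you cite Proposition~\ref{prop:def_green2} for existence of the limit where you mean Proposition~\ref{prop:def_green} (otherwise the argument reads as circular), and for the boundedness claim with an arbitrary radius $r>R_0$ one should add the one-line observation that \eqref{eq:est_green} gives $g_\omega\le \ln r+1$ on $\mathbb{D}_r\setminus\overline{\mathbb{D}}_{R_0}$.
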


\begin{remark}\label{rem:independ}
A natural example of a bounded random system of quadratic maps, which was  studied in \cite{LZ}, is the following.

Let $V\subset \mathbb D(0,R)$ be a Borel set, let $\nu$ be a Borel probability distribution on $V$. Put $\Omega=\Pi_{-\infty}^\infty V$, 
equipped with the (completed) product 
$\sigma$- algebra and the (completed) product probability $m$. Let $\sigma:\Omega\to\Omega$ be the left shift. Then $\sigma$ is an ergodic measure preserving automorphism.
The random iterates defined  in \eqref{eq:random_iterate} correspond in this case  to the random system, where, at each step, the parameter $c$
is chosen independently, according to the probability distribution $\nu$.
\end{remark}

\section{Random measures in Polish spaces}\label{sec:random_general}
We shall use a formalism of random sets and random measures. So, now we introduce necessary definitions and facts. We follow the presentation  elaborated in \cite{crauel}.
Let $(\Omega,\mathcal F, m)$ be a (complete) probability space, let $X$ be a Polish space.
Recall from \cite{crauel} that a function $\phi:\Omega\times X \to\R$, $\phi(\omega, z)=\phi_\omega(z)$, is called a random continuous function if
for every $\omega\in\Omega$ the function 
$$
X\ni z\longmapsto \phi_\omega(z)\in \R
$$ 
is continuous and bounded, and, in addition, for every $z\in X$ the function
$$
\Omega\ni\omega\longmapsto \phi(\om,z)\in\R
$$
is measurable. 
It then follows ( see, e.g., Lemma~1.1 in \cite{crauel}) that every random continuous function is measurable with respect to the product
$\sigma$--algebra $\mathcal F\otimes \mathcal B$, where  $\mathcal B$ is the Borel $\sigma$--algebra in $X$. Moreover, the map
$$
\Omega\ni\omega\longmapsto \|\phi_\omega\|_\infty\in\mathbb R
$$ 
is measurable and  $m-$ integrable.
The vector space of all real--valued random continuous functions is denoted by $C_b(\Omega\times X)$. Equipped with the norm 
$$
\|\phi\|:=\int_\Omega\|\phi_\omega\|_\infty dm(\om)
$$
it becomes a Banach space.

Put 
$$
\mathcal X:=\Omega\times X.
$$


Let 
$
\pi_1: \mathcal X \to \Omega
$
be the projection onto the first coordinate, i.e.,
$$
\pi_1(\om,z)=\om.
$$
Let $\mathcal M(\mathcal X)$ be the set of all non-negative probability measures defined on the $\sigma$-algebra $\mathcal F\otimes \mathcal B$,   and let $\mathcal {M}_m(X)$ be the set of all non-negative probability measures defined on the $\sigma$-algebra $\mathcal F\otimes \mathcal B$  that project onto $m$ under the map $\pi_1:X\to \Omega$, i.e. 
$$
\mathcal {M}_m=\big\{\mu\in\mathcal M (\mathcal X): \mu\circ\pi_1^{-1} =m\big\}.
$$
\begin{definition}
A map $\mu:\Omega\times \mathcal B\to[0,1]$, $(\omega, B)\longmapsto \mu_\omega(B)$, is called a random probability measure on $X$ if 
\begin{itemize}
\item{} For every set $B\in\mathcal B$ the function $\Om\ni\omega\longmapsto \mu_\omega(B)\in[0,1]$ is measurable,
\item{} For $m$-almost every $\omega\in\Omega$ the map $\mathcal B\ni B\mapsto \mu_\omega(B)\in[0,1]$ is a Borel probability measure.
\end{itemize}
\end{definition}
A random measure $\mu$ will be frequently denoted as $\{\mu_\omega\}_{\om\in\Om}$ or $\{\mu_\omega:\om\in\Om\}$.

The set $\mathcal M_m$ can be canonically identified with the collection of all random probability measures on $X$ as follows.

\begin{prop}[see Propositions 3.3 and 3.6 in\cite{crauel}]\label{prop:global_measure}
With the above notation, for every measure $\mu\in\mathcal M_m(X)$ there exists a unique random measure $\{\mu_\omega\}_{\om\in\Om}$ on $X$ such that 
$$
\int_{\Omega\times X}h(\omega,z)\,d\mu(\omega,z)
=\int_\Omega\left (\int_X h(\omega,z)\,d\mu_\omega(z)\right)dm(\omega)
$$
for every bounded measurable function $h:\Omega\times X\to \mathbb R$ { and for every measurable non-negative function $h:\Omega\times X\to \mathbb R$.}

Conversely, if $\{\mu_\omega\}_{\om\in\Om}$ is a random measure on $\mathcal X$, then for every bounded measurable  {or non- negative measurable} function 
$h:\mathcal X\to\mathbb R$ the function $\Om\ni\omega\longmapsto \int_{X}h(\omega,z)d\mu_\omega(z)$ is measurable, and the assignment
$$
\mathcal F\otimes \mathcal B\ni A\longmapsto\int_\Omega\int_{X}\1_A(\omega,z)d\mu_\omega(z) dm(\omega),
$$
defines a probability measure  (a ''global measure'') $\mu\in\mathcal M_m(\mathcal X)$.
\end{prop}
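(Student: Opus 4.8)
The plan is to establish the two directions of this disintegration statement separately, beginning with the (easier) converse. Suppose $\{\mu_\om\}_{\om\in\Om}$ is a random probability measure. I would first show that for every bounded measurable $h:\mathcal X\to\R$ the map $\om\mapsto\int_X h(\om,z)\,d\mu_\om(z)$ is measurable. This holds by definition when $h=\1_{F\times B}$ with $F\in\mathcal F$, $B\in\mathcal B$, since the integral then equals $\1_F(\om)\mu_\om(B)$, measurable in $\om$ by the first defining property of a random measure. A monotone class (Dynkin $\pi$--$\lambda$) argument extends measurability to indicators of all sets in $\mathcal F\otimes\mathcal B$, then to nonnegative simple functions, and finally, via monotone convergence, to all bounded and all nonnegative measurable $h$. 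Once this is in hand, the assignment $A\mapsto\int_\Om\int_X\1_A\,d\mu_\om\,dm$ is well defined; countable additivity follows from monotone convergence applied inside both integrals, the total mass equals $\int_\Om\mu_\om(X)\,dm=1$ because each fiber is a probability measure, and projection onto $m$ is the computation $\mu(F\times X)=\int_F\mu_\om(X)\,dm(\om)=m(F)$. Hence the resulting global measure lies in $\mathcal M_m(\mathcal X)$.

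For the forward direction I would construct the disintegration of a given $\mu\in\mathcal M_m(\mathcal X)$ by Radon--Nikodym. For each fixed $B\in\mathcal B$ the set function $F\mapsto\mu(F\times B)$ is a finite measure on $(\Om,\mathcal F)$ absolutely continuous with respect to $m$ (if $m(F)=0$ then $\mu(F\times B)\le\mu(F\times X)=m(F)=0$), so it admits a density $\rho_B\in L^1(m)$ with $0\le\rho_B\le 1$ and $\rho_X=1$ $m$-a.e. Setting $\mu_\om(B):=\rho_B(\om)$, the map $\om\mapsto\mu_\om(B)$ is measurable for each fixed $B$, and finite additivity $\mu_\om(B_1\sqcup B_2)=\mu_\om(B_1)+\mu_\om(B_2)$, monotonicity, and normalization $\mu_\om(X)=1$ each hold for $m$-a.e.\ $\om$. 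The defining integral identity holds for $h=\1_{F\times B}$ by the very definition of $\rho_B$, and extends to all bounded and nonnegative $h$ by the same monotone-class scheme; uniqueness follows because the identity on product indicators already pins down $\om\mapsto\mu_\om(B)$ up to an $m$-null set for every $B$.

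The main obstacle is the passage from these fiberwise identities, each valid only \emph{outside a $B$-dependent null set}, to a single full-measure set of $\om$ on which $B\mapsto\mu_\om(B)$ is a genuine countably additive Borel probability measure; a priori the exceptional null sets form an uncountable union as $B$ ranges over $\mathcal B$. This is exactly where the Polish structure of $X$ enters, and is the content of the classical disintegration (regular conditional probability) theorem for standard Borel targets. I would fix a countable algebra $\mathcal A_0$ generating $\mathcal B$ (available since $X$ is separable metric), impose finite additivity, monotonicity and normalization only on the countably many relations indexed by $\mathcal A_0$, and discard their countable union of null sets, obtaining a full-measure $\Om_0$ on which $B\mapsto\mu_\om(B)$ is a finitely additive normalized premeasure on $\mathcal A_0$. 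To upgrade to countable additivity I would invoke inner regularity by compact sets in the Polish space $X$ (tightness), which forces continuity from above at $\emptyset$ on the generating algebra; Carath\'eodory extension then yields, for each $\om\in\Om_0$, a unique Borel probability measure $\mu_\om$ agreeing with $\rho_{(\cdot)}(\om)$ on $\mathcal A_0$. A final monotone-class argument shows $\om\mapsto\mu_\om(B)$ remains measurable and equals $\rho_B(\om)$ $m$-a.e.\ for every $B\in\mathcal B$, completing the identification and matching \cite{crauel}.
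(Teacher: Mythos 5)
Your argument is correct in outline, but there is nothing in the paper to compare it against: the authors do not prove this proposition at all, they simply import it from Crauel's monograph (Propositions 3.3 and 3.6 there), and your proposal is in effect a reconstruction of the standard proof underlying that citation. The converse direction is exactly the routine functional monotone class scheme you describe (rectangles, Dynkin's $\pi$--$\lambda$ theorem, simple functions, monotone convergence), and the forward direction is the classical construction of a regular conditional probability: Radon--Nikodym densities $\rho_B$ for $F\mapsto\mu(F\times B)$, consistency relations imposed only on a countable generating algebra $\mathcal A_0$ so that the exceptional null sets can be collected, tightness in the Polish space $X$ to force countable additivity on $\mathcal A_0$, and Carath\'eodory extension. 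This is precisely where Polishness is used, and you correctly identify it as the crux rather than burying it. Two details deserve to be made explicit if you write this out in full. First, in the tightness step the compact sets must themselves be chosen countably and compatibly with the global measure --- e.g.\ for each $A_n\in\mathcal A_0$ and each $k$ pick a compact $K_{n,k}\subset A_n$ with $\mu\bigl(\Omega\times(A_n\setminus K_{n,k})\bigr)<4^{-k}$, so that a Markov-inequality argument gives, off a single null set, the inner regularity needed for continuity from above at $\emptyset$; the phrase ``invoke inner regularity'' glosses this. Second, uniqueness in the statement can only mean uniqueness up to $m$-a.e.\ equality of the fiber measures, and to pass from ``$\mu_\omega(B)=\mu'_\omega(B)$ for $m$-a.e.\ $\omega$, for each fixed $B$'' to ``$\mu_\omega=\mu'_\omega$ for $m$-a.e.\ $\omega$'' you must again route the argument through the countable algebra $\mathcal A_0$ and the uniqueness theorem for measures agreeing on a generating $\pi$-system; your closing sentence gestures at this but should say it. With those points spelled out, your proof is complete and matches the one the cited reference supplies.
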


\


\

Another useful characterization of random probability measure on $X$ is the following (see Remark 3.20 in~ \cite{crauel}).
\begin{prop}\label{prop:rm}
A family of Borel probability measures on $X$:  $\mu_\omega$, $\omega\in\Omega$   is a random probability measure if and only if  for every bounded continuous function $\varphi:X\to\R$
the map
$$\omega\mapsto \mu_\omega(\varphi)$$
is measurable.
 
\end{prop}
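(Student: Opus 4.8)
The plan is to establish the two implications of the equivalence separately, treating the family $\{\mu_\omega\}_{\omega\in\Omega}$ of Borel probability measures as fixed throughout; since each $\mu_\omega$ is assumed to be a Borel probability measure, the defining requirement of being a random probability measure reduces to the measurability of $\omega\mapsto\mu_\omega(B)$ for every $B\in\mathcal B$. The forward implication is routine, while the converse carries the content.

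For the forward direction I would assume that $\omega\mapsto\mu_\omega(B)$ is measurable for every Borel $B$ and deduce measurability of $\omega\mapsto\mu_\omega(\varphi)$ for bounded continuous --- in fact for bounded measurable --- $\varphi$. This is the standard approximation argument: the conclusion holds for indicators $\varphi=\1_B$ by hypothesis, hence for simple functions by linearity; and since every bounded measurable $\varphi$ is a uniform limit of simple functions $s_n$, the estimate $|\mu_\omega(s_n)-\mu_\omega(\varphi)|\le\|s_n-\varphi\|_\infty\to 0$ shows that $\omega\mapsto\mu_\omega(\varphi)$ is a uniform limit of measurable functions and is therefore measurable.

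The converse is the substantive direction, where I assume only that $\omega\mapsto\mu_\omega(\varphi)$ is measurable for every bounded continuous $\varphi$ and must recover measurability at the level of sets. The first step is to handle open sets: fixing a compatible metric $d$ on the Polish space $X$ and a proper open set $U\subseteq X$, I would set
$$\varphi_n(x)=\min\bigl(1,\,n\,d(x,X\setminus U)\bigr),$$
so that each $\varphi_n$ is bounded and continuous and $\varphi_n\uparrow\1_U$ pointwise. The monotone convergence theorem then gives $\mu_\omega(\varphi_n)\uparrow\mu_\omega(U)$ for every $\omega$, whence $\omega\mapsto\mu_\omega(U)$, being a pointwise limit of measurable functions, is measurable (the case $U=X$ being trivial). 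The second step is a Dynkin ($\pi$--$\lambda$) bootstrap: letting $\mathcal D$ be the collection of Borel sets $B$ for which $\omega\mapsto\mu_\omega(B)$ is measurable, I would verify that $\mathcal D$ is a $\lambda$-system --- it contains $X$ since $\mu_\omega(X)\equiv 1$, it is closed under proper differences via $\mu_\omega(B\setminus A)=\mu_\omega(B)-\mu_\omega(A)$, and it is closed under increasing unions via continuity from below $\mu_\omega(\bigcup_n A_n)=\lim_n\mu_\omega(A_n)$. Since the open sets form a $\pi$-system generating $\mathcal B$ and lie in $\mathcal D$ by the first step, the $\pi$--$\lambda$ theorem yields $\mathcal B\subseteq\mathcal D$, which is exactly the desired measurability.

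The main obstacle is precisely this converse passage from integrals of continuous test functions to measurability of the set-valued assignment $\omega\mapsto\mu_\omega(B)$, and the two devices that overcome it are the metric approximation $\varphi_n\uparrow\1_U$ of the indicator of an open set and the Dynkin-system argument that promotes open sets to all Borel sets. Everything else --- the verification that $\mathcal D$ is a $\lambda$-system and the approximation step in the forward direction --- follows from the elementary additivity and continuity properties of probability measures together with the stability of measurability under pointwise limits.
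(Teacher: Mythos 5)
Your proof is correct. Note, however, that the paper does not prove this proposition at all: it is stated as a quotation of Remark 3.20 in Crauel's book, so there is no internal argument to compare yours against. What you supply is the standard proof underlying that reference, and it is complete: the forward direction is the routine indicator--simple--uniform-limit approximation (valid since each $\mu_\omega$ is a probability measure, so $|\mu_\omega(s_n)-\mu_\omega(\varphi)|\le\|s_n-\varphi\|_\infty$), and the converse correctly combines the two necessary devices, namely the Lipschitz approximation $\varphi_n(x)=\min\bigl(1,\,n\,d(x,X\setminus U)\bigr)\uparrow \1_U$ for proper open $U$ (with the trivial case $U=X$ rightly set aside, since $d(\cdot,\emptyset)$ is undefined), followed by the $\pi$--$\lambda$ theorem applied to the $\lambda$-system $\mathcal D=\{B\in\mathcal B:\ \omega\mapsto\mu_\omega(B)\ \text{is measurable}\}$, using that the open sets form a $\pi$-system generating $\mathcal B$. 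The $\lambda$-system verifications (proper differences via finite additivity, increasing unions via continuity from below) and the monotone convergence step are all sound, so your argument stands as a self-contained proof of what the paper leaves to the literature.
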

\begin{definition}\label{def:random_closed}
Following Definition 2.1 in \cite{crauel} we say that a function $\Omega\ni\omega\mapsto C_\omega$, ascribing to each point $\omega\in\Omega$ a closed subset $C\subset X$
is called a random closed set if for each $z\in\C$ the function
$$\Omega\ni\omega\mapsto {\rm dist}(z,C_\omega)\in\R$$ is measurable. \end{definition}

Since the probability measure m on $\Omega$ is  
assumed to be complete, we can also give another characterization (see Proposition 2.4 in \cite{crauel}):

\begin{prop}
 Let $(\Omega, \mu)$ be a complete probability space. A function $\Omega\ni\omega\mapsto C_\omega\subset X$ is a random closed set if and only if 
 all sets $C_\omega$ are closed and, moreover,  the union (''graph'')
$$C:=\bigcup_{\omega\in\Omega}\{\omega\}\times C_\omega$$
is a measurable subset of $\Omega\times X$.
 
\end{prop}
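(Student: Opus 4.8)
The plan is to prove the two implications separately, the bridge between them being the elementary observation that, because each $C_\omega$ is closed, one has $z\in C_\omega$ if and only if ${\rm dist}(z,C_\omega)=0$. Consequently the graph $C=\bigcup_{\omega}\{\omega\}\times C_\omega$ is exactly the zero set of the distance function
$$\rho(\omega,z):={\rm dist}(z,C_\omega),\qquad (\omega,z)\in\Omega\times X.$$
Both directions then amount to relating measurability of $C$ to measurability of $\rho$, and the hypothesis that all $C_\omega$ are closed is used precisely to make this identification.

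For the forward implication (random closed set $\Rightarrow$ graph measurable), I would show that $\rho$ is jointly $\mathcal F\otimes\mathcal B$-measurable and then conclude that $C=\rho^{-1}(\{0\})$ is measurable. Joint measurability follows from the Carath\'{e}odory structure of $\rho$: by Definition~\ref{def:random_closed} the map $\omega\mapsto\rho(\omega,z)$ is measurable for each fixed $z$, while $z\mapsto\rho(\omega,z)$ is $1$-Lipschitz, hence continuous, for each fixed $\omega$. To upgrade measurability in one variable plus continuity in the other to joint measurability I would use separability of the Polish space $X$: fix a countable dense set, and for each $n$ partition $X$ into countably many Borel sets $B^n_k$ of diameter $<1/n$ with chosen base points $x^n_k\in B^n_k$, then set
$$\rho_n(\omega,z)=\sum_k\rho(\omega,x^n_k)\,\mathbf 1_{B^n_k}(z).$$
Each $\rho_n$ is manifestly $\mathcal F\otimes\mathcal B$-measurable, and continuity of $\rho$ in $z$ gives $\rho_n\to\rho$ pointwise on $\Omega\times X$, so $\rho$ is measurable as a pointwise limit of measurable functions. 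This direction uses neither completeness nor any projection theorem.

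For the reverse implication (graph measurable $\Rightarrow$ random closed set), I would fix $z\in X$ and $t>0$ and note that $\{\omega:{\rm dist}(z,C_\omega)<t\}$ is precisely the set of $\omega$ for which the open ball $B(z,t)$ meets $C_\omega$; that is, it equals the projection onto $\Omega$ of the measurable set $C\cap(\Omega\times B(z,t))$. The crucial input is then the measurable projection theorem: if $X$ is Polish and the probability space is complete, the projection to $\Omega$ of any set in $\mathcal F\otimes\mathcal B$ again lies in $\mathcal F$. Granting this, the sets $\{\omega:{\rm dist}(z,C_\omega)<t\}$ are measurable for every $t$, so $\omega\mapsto{\rm dist}(z,C_\omega)$ is measurable, which is exactly the defining property of a random closed set.

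The main obstacle is the measurable projection theorem invoked in the reverse direction; this is the one genuinely nontrivial ingredient, resting on the theory of analytic sets (the projection of a product-measurable set is analytic, hence universally measurable), and it is exactly the point at which the standing completeness assumption on the probability space is indispensable. Everything else reduces to the elementary approximation and zero-set arguments above.
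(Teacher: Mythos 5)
Your proposal is correct, but note that the paper itself gives no proof of this proposition: it is quoted directly from Crauel (Proposition 2.4 of \emph{Random Probability Measures on Polish Spaces}), so the only meaningful comparison is with that source, whose argument yours reproduces. Specifically, your two steps are exactly the standard ones: joint $\mathcal F\otimes\mathcal B$-measurability of the Carath\'eodory function $(\omega,z)\mapsto\mathrm{dist}(z,C_\omega)$ (measurable in $\omega$, $1$-Lipschitz in $z$, approximated via a countable Borel partition of the Polish space) identifies the graph as its zero set, closedness of the fibers being used precisely there, while the converse rests on the measurable projection theorem for complete probability spaces applied to $C\cap\bigl(\Omega\times B(z,t)\bigr)$, which is indeed the one nontrivial ingredient and the exact point where completeness of $(\Omega,\mathcal F,m)$ is indispensable.
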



We shall use yet another equivalent  characterization of random closed sets:

\begin{prop}[see Proposition 2.4 in \cite{crauel}]\label{prop:charact:random_closed}
$C$ is a random closed set in $X$ if and only if for every open set $U\subset X$ the set $\{\omega\in\Omega: U\cap C_\omega\neq\emptyset\}$ is measurable.
\end{prop}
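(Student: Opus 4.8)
The plan is to reduce both implications to a single elementary identity relating the distance function to the ``hitting'' of open balls, and then to use the separability of the Polish space $X$ to pass from basic balls to arbitrary open sets. I would fix a countable dense set $D=\{z_1,z_2,\dots\}\subset X$ and recall that the balls $B(z_i,q)$ with $z_i\in D$ and $q\in\mathbb Q^+$ form a countable basis for the topology of $X$. The identity I would use throughout is
$$\{\omega\in\Omega:\ B(z,r)\cap C_\omega\neq\emptyset\}=\{\omega\in\Omega:\ {\rm dist}(z,C_\omega)<r\},$$
valid for every $z\in X$ and every $r>0$, with the convention ${\rm dist}(z,\emptyset)=+\infty$ so that it remains correct on fibers where $C_\omega=\emptyset$. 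This follows at once from the definition of the infimum defining ${\rm dist}(z,C_\omega)$ together with the closedness of $C_\omega$.

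For the forward implication I would assume that $C$ is a random closed set, so $\omega\mapsto {\rm dist}(z,C_\omega)$ is measurable for every fixed $z$. By the identity, each set $\{\omega:\ B(z_i,q)\cap C_\omega\neq\emptyset\}$ equals $\{\omega:\ {\rm dist}(z_i,C_\omega)<q\}$, the preimage of $(-\infty,q)$ under a measurable function, and is therefore measurable. Given an arbitrary open $U\subset X$, I would write $U=\bigcup_k B_k$ as a countable union of basic balls from the family above; since $U\cap C_\omega=\bigcup_k(B_k\cap C_\omega)$, the set $U\cap C_\omega$ is nonempty exactly when $B_k\cap C_\omega\neq\emptyset$ for some $k$, so
$$\{\omega:\ U\cap C_\omega\neq\emptyset\}=\bigcup_k\{\omega:\ B_k\cap C_\omega\neq\emptyset\}$$
is a countable union of measurable sets, hence measurable.

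For the reverse implication I would assume that $\{\omega:\ U\cap C_\omega\neq\emptyset\}$ is measurable for every open $U$, fix $z\in X$, and apply this hypothesis to the balls $U=B(z,r)$. By the same identity, $\{\omega:\ {\rm dist}(z,C_\omega)<r\}$ is then measurable for every $r>0$. Because the half-lines $[0,r)$ with $r\in\mathbb Q^+$ generate the Borel $\sigma$-algebra of $[0,+\infty]$, this yields measurability of $\omega\mapsto {\rm dist}(z,C_\omega)$, which is precisely the defining property of a random closed set.

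The argument is largely bookkeeping once the central identity is in place, and I expect the only genuinely delicate point to be in the forward direction: the passage from basic balls to an arbitrary open set $U$ depends crucially on the second countability of $X$, which lets me express $U$ as a \emph{countable} union of basic balls and thereby keep the resulting union of $\omega$-sets measurable. The separability of the Polish space $X$ is thus exactly where the content lies; I would also keep the convention ${\rm dist}(z,\emptyset)=+\infty$ explicit throughout so that the degenerate fibers $C_\omega=\emptyset$ are handled uniformly and do not require a separate case.
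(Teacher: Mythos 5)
Your proposal is correct, and it is essentially the paper's own route: the paper gives no proof here but cites Proposition 2.4 of Crauel, where the equivalence is established by exactly your reduction via the identity $\{\omega:\ B(z,r)\cap C_\omega\neq\emptyset\}=\{\omega:\ {\rm dist}(z,C_\omega)<r\}$ together with a countable basis of balls coming from separability, and with the convention ${\rm dist}(z,\emptyset)=+\infty$ for empty fibers. One cosmetic remark: that identity follows from the definition of the infimum alone --- closedness of $C_\omega$ plays no role for open balls --- but this does not affect your argument.
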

\

\section{Random equilibrium measures}

In this section we consider a random bounded system $\mathcal S$ of quadratic polynomials, as defined in Definition ~\ref{def:bounded_random}. Let $R$ be a constant coming from this definition and let $R_0$ be assigned 
to $R$ according to Proposition~\ref{prop:log}.

We shall  interpret the objects defined in Section~ \ref{sec:random_green} in terms of the language introduced in Section~ \ref{sec:random_general}.

\begin{prop}\label{prop:random_harmonic_measure}
Let $\mathcal S$  be a bounded random system of quadratic maps. Then

\begin{enumerate}
 \item{} The filled-in random Julia sets  $K_\omega$ are random closed sets, 
 i.e., the collection $\{K_\omega\}_{\omega\in\Omega}$ satisfies the condition formulated in Definition ~\ref{def:random_closed}.
 \item{} The collection $g_\omega$ of random Green functions with poles at infinity is a random continuous function in $\mathbb D_{R_0}$.
 \item{} The collection $\{\mu_\omega\}_{\omega\in\Omega}$ of random equilibrium (harmonic) measures at infinity on $J_\omega$, defined by
 $$\mu_\omega=\frac{1}{2\pi}\Delta g_\omega$$
 is a random probability measure.
\end{enumerate}
\end{prop}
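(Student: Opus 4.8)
The plan is to verify the three measurability statements in turn, since each is a separate claim about the random objects, and the later parts will build on the earlier ones.

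For part (1), I would use the characterization from Proposition~\ref{prop:charact:random_closed}: it suffices to show that for every open set $U\subset\mathbb C$ the set $\{\omega\in\Omega: U\cap K_\omega\neq\emptyset\}$ is measurable. My approach is to exploit the dynamical description of the escaping set. By Proposition~\ref{prop:log}, we have the uniform radius $R_0$ with $\mathbb D_{R_0}^*\subset\mathcal A_\omega$ for every $\omega$, so $K_\omega\subset\overline{\mathbb D}_{R_0}$. Since $K_\omega=\{z: f_\omega^n(z)\not\to\infty\}$, I would write $K_\omega=\bigcap_{n} (f_\omega^n)^{-1}(\overline{\mathbb D}_{R_0})$; the key point is that $z\in K_\omega$ iff the whole forward orbit stays in $\overline{\mathbb D}_{R_0}$, which follows from the trapping property \eqref{eq:1}. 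The map $(\omega,z)\mapsto f_\omega^n(z)$ is measurable in $\omega$ (because $\omega\mapsto c(\omega)$ is measurable) and continuous in $z$, hence jointly measurable, so each set $\{(\omega,z): |f_\omega^n(z)|\le R_0\}$ is in $\mathcal F\otimes\mathcal B$, and the graph $\bigcup_\omega\{\omega\}\times K_\omega$ is a countable intersection of such sets, hence measurable. Completeness of $m$ then gives the random-closed-set property.

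For part (2), by the definition of random continuous function I must check two things: that for each fixed $\omega$ the function $z\mapsto g_\omega(z)$ is continuous and bounded on $\mathbb D_{R_0}$, and that for each fixed $z$ the function $\omega\mapsto g_\omega(z)$ is measurable. The first is exactly the content of Proposition~\ref{prop:def_green2} (continuity of $g_\omega$ and the uniform bound \eqref{eq:est_green_above}). For the measurability in $\omega$, I would use the explicit limit formula \eqref{eq:green2}: each approximant $\omega\mapsto \frac{1}{2^n}\ln|f_\omega^n(z)|$ is measurable by the joint measurability of $(\omega,z)\mapsto f_\omega^n(z)$ established above, and a pointwise limit of measurable functions is measurable. (For $z\in K_\omega$ one sets $g_\omega\equiv 0$, which is harmless; the measurability statement is already asserted in Proposition~\ref{prop:def_green2}.) The uniform boundedness over $\mathbb D_{R_0}$ is again \eqref{eq:est_green_above}.

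For part (3), I would invoke Proposition~\ref{prop:rm}: the family $\{\mu_\omega\}$ is a random probability measure provided $\omega\mapsto\mu_\omega(\varphi)$ is measurable for every bounded continuous $\varphi$. The cleanest route is through Proposition~\ref{prop:max_measure2}, which expresses $\mu_\omega(\varphi)$ as the limit $\lim_{n\to\infty}2^{-n}\sum_{y\in(f_\omega^n)^{-1}(z_0)}\varphi(y)$ for a fixed $z_0\in\mathbb D_{R_0}^*$. For each fixed $n$ the finite sum over preimages depends measurably on $\omega$ (the preimages, counted with multiplicity, vary measurably with the parameters $c(\omega),\dots,c(\sigma^{n-1}\omega)$, which are measurable functions of $\omega$), so $\omega\mapsto 2^{-n}\sum_{y}\varphi(y)$ is measurable, and the pointwise limit is measurable; this gives measurability of $\omega\mapsto\mu_\omega(\varphi)$. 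The anticipated main obstacle is precisely this last point: making rigorous the claim that the unordered set of preimages, with multiplicity, depends measurably on $\omega$, since one cannot in general choose the branches of $(f_\omega^n)^{-1}$ continuously. I would handle this by noting that the symmetric functions of the roots (equivalently the coefficients of the polynomial $f_\omega^n(z)-z_0$) are measurable in $\omega$, and that $\sum_y\varphi(y)$ over the multiset of roots is a continuous function of these coefficients for continuous $\varphi$; composing gives the required measurability without ever selecting individual branches.
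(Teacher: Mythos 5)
Your proposal is correct, and parts (2) and (3) follow essentially the paper's own route: part (2) is read off from Proposition~\ref{prop:def_green2}, and part (3) combines Proposition~\ref{prop:rm} with the convergence in Proposition~\ref{prop:max_measure2}; your symmetric-function argument for the measurability of the preimage sums is precisely the content of the paper's Lemmas~\ref{lem:lemma1} and~\ref{lem:lemma2} (roots of a polynomial, counted with multiplicity, depend continuously on its coefficients, composed with the measurable map $\omega\mapsto\underline c(\omega)$). Part (1), however, takes a genuinely different route. The paper passes to the complementary event $\{\omega: U\subset\mathcal A_\omega\}$, exhausts $U$ by relatively compact open sets $V_n$, rewrites $\overline V_n\subset\mathcal A_\omega$ as $\inf_{\overline V_n}g_\omega>0$, and reduces to a countable dense set of points $z_k$, so that everything follows from the measurability of $\omega\mapsto g_\omega(z_k)$; this leans on the potential-theoretic fact that $g_\omega$ is positive exactly on $\mathcal A_\omega$. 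You instead use the purely dynamical identity $K_\omega=\bigcap_n (f_\omega^n)^{-1}\bigl(\overline{\mathbb D}_{R_0}\bigr)$, justified by the trapping property \eqref{eq:1}, deduce measurability of the graph from joint (Carath\'eodory) measurability of $(\omega,z)\mapsto f_\omega^n(z)$, and then invoke the graph characterization of random closed sets, which is where completeness of the measure enters. Your version is more elementary and avoids Green functions altogether in part (1); the paper's version reuses the measurability of $\omega\mapsto g_\omega(z)$ already provided by Proposition~\ref{prop:def_green2} and thus keeps all three parts running through the same object. Both arguments are complete and correct.
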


\begin{proof}
To prove item (1), let $U\subset \C$ be an arbitrary open set. We need to show that the set
\begin{equation}\label{eq:meas}
\{\omega\in\Omega:K_\omega\cap U\neq\emptyset\}
\end{equation}
is measurable.
Equivalently, we need to show measurability of the set 

$$\{\omega\in\Omega:K_\omega\cap U=\emptyset\}=\{\omega\in\Omega:U\subset\mathcal A_\omega\}.$$
Let $(V_n)_{n\in\mathbb N}$ be an ascending sequence of open sets, relatively compact in $U$, such that $\bigcup_{n\in\mathbb N} V_n=U$.

Then $U\subset \mathcal A_\omega$ if and only if $V_n\subset\mathcal A_\omega$ for all $n\in\mathbb N$  and we are left to show that all the sets
\begin{equation}\label{eq:v_n}
\{\omega\in\Omega: \overline{V}_n\subset \mathcal A_\omega\}
\end{equation}
are measurable.

Since $\overline V_n$ is compact, the condition expressed in \eqref{eq:v_n} can be written in terms of Green function:
\begin{equation}\label{eq:dense}
\{\omega\in\Omega: \overline{V}_n\subset \mathcal A_\omega\}=\{\omega\in\Omega:\inf_{z\in {\overline V}_n} g_\omega(z)>0\}
\end{equation}
Let $\{z_k\}_{k\in \mathbb N}$ be a dense set in $U$. The condition expressed in \eqref{eq:dense} can be written equivalently as
\begin{equation}\label{eq:dense2}
\{\omega\in\Omega:\inf_{z_k\in {\overline V}_n} g_\omega(z_k)>0\}
\end{equation}
Since for each $k$ the function $\omega\mapsto g_\omega(z_k)$ is measurable, the set defined in \eqref{eq:dense2} is measurable, 
and, therefore the sets 
defined in \eqref{eq:dense}, 
 \eqref{eq:v_n}
and \eqref{eq:meas}  are measurable as well.

Item (2) follows directly from Proposition ~\ref{prop:def_green2} and from the definition of a random continuous function.

To prove item (3) we use the characterization of random probability measures formulated in Proposition~\ref{prop:rm}.
To continue the proof we need two lemmas.
\begin{lemma}\label{lem:lemma1}
 Denote by  $\underline c=(c_0,c_1,\dots, c_{n-1})$  a sequence of parameters in $\mathbb D(0,R)$ i.e. $\underline c \in \mathbb D (0,R)^n$.  Let $R_0$ be the value assigned to $R$
 in Proposition~\ref{prop:log}.

\noindent Let $\varphi$ be a continuous function in $\overline{\mathbb D}_{R_0}$.
Let $z_0\in \overline{\mathbb D}_{R_0}$.  Then the function
\begin{equation}\label{eq:continuous}
\left (\mathbb D(0,R)\right )^n\ni \underline c\mapsto \sum_{v\in (f^n_{\underline c})^{-1}(z_0)}\varphi(v)
\end{equation}
(where the preimages are counted with multiplicity)
is continuous.
\end{lemma}
\begin{proof}
The points $v\in (f^n_{\underline c})^{-1}(z_0)$ are roots of the polynomial $z\mapsto f^n_{\underline c}(z)-z_0$. 
The coeffcients of this polynomial depend also polynomially on
the coefficients of $\underline c=(c_0,\dots c_{k-1}).$ Continuity of the function ~\eqref{eq:continuous} thus 
follows immediately from the fact that roots of a complex polynomial (counted 
with multiplicity) depend continuously on the coefficients.
 
\end{proof}

\begin{lemma}\label{lem:lemma2} Let $\varphi$  be a continuous function in $\overline{\mathbb D}_{R_0}$.
Let $z_0\in \overline{\mathbb D}_{R_0}$. For each $n\in\mathbb N$  the function
$$\Omega\ni\omega \mapsto \sum_{v\in (f^n_{\omega})^{-1}(z_0)}\varphi(v)$$
is measurable.
\end{lemma}
\begin{proof}
It is enough to observe that the function

$$\omega\mapsto\underline c(\omega)= (c_0(\omega),c_1(\omega)\dots c_{n-1}(\omega))$$
is measurable, whereas
the function
$$\underline c\mapsto\sum_{v\in (f^n_{\underline c})^{-1}(z_0)}\varphi(v)$$
is continuous, by Lemma~\ref{lem:lemma1}.
\end{proof}
We can now conclude the proof of Proposition~\ref{prop:random_harmonic_measure}.
Indeed, defining by $\mu_{n,\omega}$ the measure equidistributed over the set $\{v\in (f^n_{\omega})^{-1}(z_0)\}$ 
(counted with multiplicity), we conclude from Lemma~\ref{lem:lemma2}
that the functions 
$$\omega\mapsto \mu_{n,\omega}(\varphi)$$
are measurable.
Since for every $\omega$ the sequence $\mu_{n,\omega}(\varphi)$ converges to $\mu_\omega(\varphi)$ (see Proposition~\ref{prop:max_measure2}), we conclude that
the limit function $\omega\mapsto \mu_\omega(\varphi)$ is measurable.

\end{proof}

Since, by Proposition ~\ref{prop:random_harmonic_measure} $\{\mu_\omega\}_{\omega\in\Omega}$ is a random probability measure, 
we can consider the global measure $\mu$ on the 
product space $\Omega\times \C$, as in Proposition~\ref{prop:global_measure}, and the global map (skew product)
$$F:\Omega\times \mathbb C\to \Omega\times \mathbb C$$ determined by the formula
 
 \begin{equation}\label{eq:skew}
 F(\omega, z)=\left (\sigma\omega, f_\omega(z)\right ).
 \end{equation}

\begin{prop}\label{prop:ergodic_mean}
 The global map $F$ is $\mu$--measurable and  the measure $\mu$ is invariant under the map $F$.

\end{prop}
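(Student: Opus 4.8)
The plan is to treat the two assertions separately: first the measurability of the skew product $F$ with respect to the product $\sigma$-algebra $\mathcal F\otimes\mathcal B$, and then the $F$-invariance of the global measure $\mu$.

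For measurability I would argue coordinatewise, writing $F=(F_1,F_2)$ with $F_1(\omega,z)=\sigma\omega$ and $F_2(\omega,z)=f_\omega(z)=z^2+c(\omega)$. The first coordinate $F_1=\sigma\circ\pi_1$ is measurable because $\pi_1$ is measurable and $\sigma$ is a measurable automorphism. For the second coordinate I would split $F_2$ as the sum of $(\omega,z)\mapsto z^2$, which is continuous and hence $(\mathcal F\otimes\mathcal B)$-measurable (it factors through $\pi_1$ and $\pi_2$), and $(\omega,z)\mapsto c(\omega)=c\circ\pi_1$, which is measurable since $\omega\mapsto c(\omega)$ is measurable by Definition~\ref{def:bounded_random}. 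A sum of measurable $\mathbb C$-valued maps is measurable, so $F_2$, and therefore $F$, is measurable into the product $\sigma$-algebra.

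For invariance I would verify directly that $\mu(F^{-1}(A))=\mu(A)$ for every $A\in\mathcal F\otimes\mathcal B$; proceeding on sets rather than on continuous test functions avoids any appeal to topological properties of $\Omega$. Fix such an $A$ and, for $\omega'\in\Omega$, let $A_{\omega'}=\{w\in\mathbb C:(\omega',w)\in A\}$ be its section, which is Borel. Since $\mathbf{1}_A(F(\omega,z))=\mathbf{1}_A(\sigma\omega,f_\omega(z))=\mathbf{1}_{A_{\sigma\omega}}(f_\omega(z))$, applying the disintegration formula of Proposition~\ref{prop:global_measure} to the bounded measurable function $\mathbf{1}_A\circ F$ gives
$$\mu(F^{-1}(A))=\int_\Omega\int_{\mathbb C}\mathbf{1}_{A_{\sigma\omega}}(f_\omega(z))\,d\mu_\omega(z)\,dm(\omega).$$
The crux is the inner integral, which I would rewrite using the fiberwise invariance \eqref{eq:invariance2}, i.e. $\mu_{\sigma\omega}=\mu_\omega\circ (f_\omega)^{-1}$. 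Taking the Borel set $A_{\sigma\omega}$ in this identity yields $\int_{\mathbb C}\mathbf{1}_{A_{\sigma\omega}}(f_\omega(z))\,d\mu_\omega(z)=\mu_{\sigma\omega}(A_{\sigma\omega})$ for $m$-a.e. $\omega$. Setting $G(\omega')=\mu_{\omega'}(A_{\omega'})$, the integral becomes $\int_\Omega G(\sigma\omega)\,dm(\omega)$; since $\sigma$ is measure preserving this equals $\int_\Omega G(\omega)\,dm(\omega)=\int_\Omega\mu_\omega(A_\omega)\,dm(\omega)$, which is exactly $\mu(A)$ by the definition of the global measure.

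The one point requiring care, and the only genuine obstacle, is that the section function $\omega'\mapsto G(\omega')=\mu_{\omega'}(A_{\omega'})$ is measurable and $m$-integrable, so that the measure-preservation step $\int_\Omega G\circ\sigma\,dm=\int_\Omega G\,dm$ is legitimate. Both facts are already built into the converse direction of Proposition~\ref{prop:global_measure}, which guarantees that $\omega'\mapsto\int_{\mathbb C}\mathbf{1}_A(\omega',w)\,d\mu_{\omega'}(w)$ is measurable, while $0\le G\le 1$ secures integrability. With this in hand the displayed chain of equalities closes and establishes $\mu(F^{-1}(A))=\mu(A)$ for every $A\in\mathcal F\otimes\mathcal B$, that is, the $F$-invariance of $\mu$.
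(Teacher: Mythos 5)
Your proof is correct; it reaches the conclusion by a route that differs from the paper's in both halves, most notably in the measurability step. The paper does not argue coordinatewise: it fixes a rectangle $C\times B$ with $C\in\mathcal F$ and $B\subset\mathbb C$ compact, writes $F^{-1}(C\times B)=\bigcup_{\omega\in\sigma^{-1}(C)}\{\omega\}\times f_\omega^{-1}(B)$, and shows this graph is measurable by checking that $\omega\mapsto f_\omega^{-1}(B)$ (set to $\emptyset$ off $\sigma^{-1}(C)$) is a random closed set, via the criterion of Proposition~\ref{prop:charact:random_closed} and the observation that $\{c\in\mathbb D(0,R): U\cap f_c^{-1}(B)\neq\emptyset\}$ is open. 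Your argument --- that $\pi_1\circ F=\sigma\circ\pi_1$ and $\pi_2\circ F$ are each measurable, and that a map into $(\Omega\times\mathbb C,\mathcal F\otimes\mathcal B)$ is measurable if and only if both compositions with the projections are --- is shorter and more elementary, and avoids the random-closed-set machinery entirely; the only slip is calling $(\omega,z)\mapsto z^2$ ``continuous'', which is vacuous since $\Omega$ carries no topology, but your parenthetical remark that it factors through the projection is the actual (and correct) justification. On invariance the two computations share the same core, namely \eqref{eq:invariance2} combined with $\sigma$-invariance of the base measure: the paper verifies $\mu(F^{-1}(A))=\mu(A)$ only for rectangles $A=C\times B$ and then relies (implicitly, via uniqueness of probability measures agreeing on a generating $\pi$-system) on such sets generating $\mathcal F\otimes\mathcal B$, whereas you treat an arbitrary $A\in\mathcal F\otimes\mathcal B$ directly by disintegrating $\mathbf 1_A\circ F$. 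The price of your generality is exactly the point you flag: one needs measurability of $\omega\mapsto\mu_\omega(A_\omega)$, which you correctly extract from Proposition~\ref{prop:global_measure}; on rectangles this is free, since there the section function is $\mathbf 1_C(\omega)\,\mu_\omega(B)$, measurable by the very definition of a random measure. Both routes are legitimate: yours trades the generation argument for the section-measurability fact, and the paper's does the opposite.
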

\begin{proof}
 First, we prove measurability of the map $F$. It is enough to prove that for every  set of the form $C\times B$ where $C \in \mathcal{F}$ and $B$ is a compact subset of $\C$, its preimage
 $F^{-1}(C\times B)$ is measurable.
 We have, by definition of the map $F$,
 \begin{equation}\label{eq:measurability}
 F^{-1}(C\times B)=\bigcup_{\omega\in\sigma^{-1}(C)}\{\omega\}\times f^{-1}_\omega(B).
 \end{equation}
 By the characterization of a random closed set, to prove measurability of \eqref{eq:measurability}, we need to prove that the collection

 $$B_\omega=\begin{cases}f^{-1}_\omega(B)\quad\text{for}\quad\omega\in \sigma^{-1}(C),\\
 \emptyset\quad\text{for}\quad \omega\notin \sigma^{-1}(C)
          \end{cases}
            $$ 
  is a random closed set.          
 To prove measurability of this set we use the characterization given in Proposition~\ref{prop:charact:random_closed}. 
 
 Let $U\subset \C$ be an open set. We need to check that the set 
 $$\{\omega\in\sigma^{-1}(C):U\cap f_\omega^{-1}(B)=\emptyset\}=\sigma^{-1}(C)\cap \{\omega\in\Omega:U\cap f_\omega^{-1}(B)=\emptyset\}$$
 is measurable. Clearly, it is sufficient to check measurability of the set $$\{\omega\in\Omega:U\cap f_\omega^{-1}(B)=\emptyset\}.$$
 Since by definition the function $\omega \mapsto c (\omega) \in \mathbb{D} (0,R)$ is measurable, it is in fact sufficient to verify Borel measurability of the set 
 $$
 \{c \in \mathbb{D} (0,R) : U \cap f_c^{-1}(B)=\emptyset \}
 $$
 or, equivalently,, Borel  measurability of  the set
 $$
 \mathcal C:=\{c \in \mathbb{D} (0,R) : U \cap f_c^{-1}(B) \neq \emptyset \}.
 $$
Since $U$ is open the above set is also open, and thus measurable.
 

 Next, we check that $F$ preserves the measure $\mu$. So, we need to show that for every $\mu$-- measurable set $A$ we have $\mu(F^{-1}(A))=\mu(A)$.
 
It is enough to check the above  equality for every set of the form $A:=C\times B$, where $C \in \mathcal F$ and $B$ is a Borel measurable subset of $\mathbb C$, since these sets generate 
the product $\sigma$--algebra  $\mathcal F\otimes\mathcal B$.
 We have:
 
 $$F^{-1}(C\times B)=\bigcup_{\omega\in \sigma^{-1}(C)}\{\omega\}\times f^{-1}_\omega(B).$$
 
 Thus,
 
 \begin{align*}&\mu(F^{-1}(C\times B))=\int_{\omega\in\sigma^{-1}(C)}\mu_\omega(f^{-1}_\omega(B))\mathbb{P}(\omega)\\&=
 \int_{\omega\in\sigma^{-1}(C)}\mu_{\sigma\omega}(B)\mathbb{P}(\omega) =  \int_{\omega \in C}\mu_{\omega}(B)\mathbb{P}(\omega) = \mu (C \times B)
 \end{align*}
 \end{proof}

\begin{remark}
It is natural to ask about ergodicity of the above skew product. It will be proved in the next section, under some additional natural assumption (Proposition~\ref{prop:global_ergodic}).
\end{remark}

\section{Random systems of quadratic  maps. Dimension of the maximal measure}\label{sec:formula}



Let us recall that, as noted in Proposition~\ref{prop:def_green2}, if $\mathcal S$ is a bounded random quadratic system, then the function 
the function $\omega\mapsto g_\omega(0)$ is $\mathbb P$-- measurable and bounded. Therefore, the integral 

$${\bf g}(0):=\int_\Omega g_\omega(0)d\mathbb P(\omega)$$ is well- defined. This value will be referred to as the global Green function.

We now define the random Lyapunov exponent:  

\begin{equation}\label{eq:lyap_random}
\chi_\omega=\int_{J_\omega}\ln |f'_\omega|(z)d\mu_\omega(z)=\int_{J_\omega}\ln|2z|d\mu_\omega(z),
\end{equation}
and, next, the global Lyapunov exponent:
\begin{equation}
\chi=\int_\Omega\left (\int_{J_\omega}\ln |f'_\omega|(z)d\mu_\omega(z)\right )d\mathbb P(\omega)=\int_{\Omega\times\mathbb C}\ln|f'_\omega|(z)d\mu(\omega,z),
\end{equation}
where $\mu$ is the global measure on $\Omega\times\mathbb C$ with fiber measures $\mu_\omega$. 
\begin{prop}\label{prop:exponent}
The global Lyapunov exponent can be calculated in terms of the global Green function:

$$\chi=\ln 2+{\bf g}(0).$$
\end{prop}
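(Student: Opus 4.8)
The plan is to reduce the whole statement to a fiberwise potential-theoretic identity and then integrate. The starting point is the elementary observation that $f_\omega(z)=z^2+c(\omega)$ has derivative $f'_\omega(z)=2z$, so that $\ln|f'_\omega(z)|=\ln 2+\ln|z|$. Since each $\mu_\omega$ is a probability measure and $\mathbb P$ is a probability measure, the global measure $\mu$ is a probability measure, whence
$$\chi=\int_{\Omega\times\mathbb C}\bigl(\ln 2+\ln|z|\bigr)\,d\mu(\omega,z)=\ln 2+\int_{\Omega\times\mathbb C}\ln|z|\,d\mu(\omega,z).$$
Thus the entire content of the proposition is the identification of $\int_{\Omega\times\mathbb C}\ln|z|\,d\mu$ with ${\bf g}(0)=\int_\Omega g_\omega(0)\,d\mathbb P(\omega)$, and for this it suffices to prove, for $\mathbb P$-a.e. $\omega$, the fiberwise identity $g_\omega(0)=\int_{\mathbb C}\ln|z|\,d\mu_\omega(z)$.

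For the fiberwise identity I would argue by potential theory. Write $p_\omega(z)=\int_{\mathbb C}\ln|z-w|\,d\mu_\omega(w)$ for the logarithmic potential of $\mu_\omega$. Since $\mu_\omega=\frac{1}{2\pi}\Delta g_\omega$ by definition and $\frac{1}{2\pi}\Delta p_\omega=\mu_\omega$ as well, the difference $g_\omega-p_\omega$ has vanishing distributional Laplacian on $\mathbb C$, hence is harmonic (Weyl's lemma). The crucial input is that the Robin constant of $\mathcal A_\omega$ is zero: because each $f^n_\omega$ is a monic polynomial of degree $2^n$, one gets $\frac{1}{2^n}\ln|f^n_\omega(z)|=\ln|z|+o(1)$ as $z\to\infty$, so $g_\omega(z)=\ln|z|+o(1)$ (equivalently $\mathrm{cap}(K_\omega)=1$); likewise $p_\omega(z)=\ln|z|+o(1)$. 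Therefore $g_\omega-p_\omega$ is an entire harmonic function tending to $0$ at infinity, and Liouville forces $g_\omega\equiv p_\omega$ on all of $\mathbb C$. Evaluating at $z=0$ gives $g_\omega(0)=\int_{\mathbb C}\ln|z|\,d\mu_\omega(z)$; in particular the right-hand side is finite, since it equals $g_\omega(0)\in[0,\ln R_0+1]$.

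It then remains to integrate over $\Omega$ and interchange the order of integration. Applying the global-measure identity of Proposition~\ref{prop:global_measure} to the measurable function $h(\omega,z)=\ln|z|$ — splitting it as $\ln^+|z|-\ln^-|z|$ into nonnegative pieces so that the nonnegative version of that proposition applies — yields
$$\int_{\Omega\times\mathbb C}\ln|z|\,d\mu(\omega,z)=\int_\Omega\Bigl(\int_{\mathbb C}\ln|z|\,d\mu_\omega(z)\Bigr)\,d\mathbb P(\omega)=\int_\Omega g_\omega(0)\,d\mathbb P(\omega)={\bf g}(0).$$
Combining this with the first display gives $\chi=\ln 2+{\bf g}(0)$, as desired.

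I expect the main obstacle to be the logarithmic singularity of $\ln|z|$ at the origin, which surfaces twice: first in justifying that the potential identity $g_\omega=p_\omega$ holds \emph{at the point} $z=0$ even when $0\in J_\omega$ (so that $\int_{\mathbb C}\ln|z|\,d\mu_\omega$ is finite and equals $g_\omega(0)$ rather than diverging to $-\infty$), and second in the integrability check needed to interchange integration against the global measure. The potential-theoretic route disposes of the first point cleanly, since $g_\omega-p_\omega$ is shown to be harmonic — hence finite and continuous — on all of $\mathbb C$, forcing $p_\omega(0)=g_\omega(0)<\infty$. As a consistency check one can recompute the same quantity from Brolin's formula (Proposition~\ref{prop:max_measure2}): for $z_0\in\mathbb D_{R_0}^*$, the roots of the monic polynomial $f^n_\omega(z)-z_0$ have product of modulus $|f^n_\omega(0)-z_0|$, so $\frac{1}{2^n}\sum_{f^n_\omega(y)=z_0}\ln|y|=\frac{1}{2^n}\ln|f^n_\omega(0)-z_0|\to g_\omega(0)$, the two cases $0\in\mathcal A_\omega$ and $0\in K_\omega$ both producing $g_\omega(0)$ in the limit.
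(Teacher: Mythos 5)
Your proof is correct and takes essentially the same route as the paper's: both identify the logarithmic potential $p_{\mu_\omega}$ with $g_\omega$ by observing that their difference is harmonic on all of $\mathbb{C}$ and tends to $0$ at infinity (hence vanishes by Liouville), then evaluate at $z=0$ to get $\int\ln|z|\,d\mu_\omega=g_\omega(0)$ and integrate over $\Omega$. The differences are cosmetic only — you bypass the paper's (inessential) appeal to Dirichlet regularity of $J_\omega$ and are somewhat more explicit about the integrability and the interchange of integrals.
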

\begin{proof} The proof does not differ much from the autonomous case. We provide the details for completeness.
$$\chi_\omega=\int_{J_\omega}\ln |f'_\omega|(z)d\mu_\omega(z)=\int_{J_\omega}\ln|2z|d\mu_\omega(z)=\ln 2+\int_{J_\omega}\ln|z|d\mu_\omega(z)$$

So, we need to calculate the integral

\begin{equation}\label{eq:potential_at zero}
\int_{J_\omega}\log|z|d\mu_\omega(z)
\end{equation}
Recall that for a Borel measure $\eta$  in $\mathbb C$, the formula
$p_\mu(w)=\int \ln|z-w|d\eta(z)$
defines a subharmonic function in $\mathbb C$, and $\Delta p_\eta=2\pi \eta$.

In the case we consider, $\eta=\mu_\omega$ is the equilibrium measure (i.e. the measure maximizing the  integral

$$\iint\ln|x-y|d\nu(x)d\nu(y),$$
where $\nu$ runs over all Borel probability measures on $J_\omega$).

Since each set $J_\omega$ is regular for Dirichlet problem, the function\\
$$x\mapsto\int\ln|x-y|d\mu_\omega(y)$$
is constant on $J_\omega$. 

\

So, for each $J_\omega$ we have:

\begin{enumerate}
\item{}$p_{\mu_\omega}$ is constant on $J_\omega$, say $p_{\mu_\omega}\equiv P$ on $J_\omega$.

\item{}$g_{\omega}\equiv 0$ on $J_\omega$.
\end{enumerate}

\

Since $\Delta p_{\mu_\omega}=\Delta g_\omega= 2\pi \mu_\omega$, we conclude that
$p_{\mu_\omega}=g_\omega+h$ where $h$ is some harmonic function in $\mathbb C$.

Observe that 
$h(z)\to 0$ as $z\to \infty$ (because both $g_\omega$ and $p_{\mu_\omega}$ are of the form $\log|z|+o(1)$ as $z\to\infty$),
and  $h(z)\equiv P$ on $J_\omega$. Since $h$ is harmonic, this implies that $P=0$, and consequently
$p_{\mu_\omega}= g_\omega.$

In particular, $p_{\mu_\omega}(0)=g_\omega(0)$,  which gives the value of the integral \eqref{eq:potential_at zero}, and shows that

$$\chi_\omega=\ln 2+ g_\omega(0).$$

The formula for the global  Lyapunov exponent follows by a direct  integration.
\end{proof}

\

\begin{remark}\label{rem1}
Recall that  $\mathcal S$ is a bounded random quadratic system, so there exists $R>0$ such that  $c(\omega)\in\mathbb D(0,R)$ for every $\omega\in\Omega$. 
Using Proposition~\ref{prop:log}, we see that for every $\omega$ the set $K_\omega$ is contained in $\mathbb D_{R_0}$,  where $R_0$ is the value coming from Proposition~\ref{prop:log}.
Thus,
\begin{equation}\label{eq:est_exponent}
\chi_\omega< \ln 2+\log R_0.
\end{equation}
This follows from the fact that $\log|f'_\omega(z)|=\log2+\log|z|<\log2+\log R_0$ for every $z\in J_\omega$.
\end{remark}

In order to give the exact formula for the dimension of the measure $\mu_\omega$ we need to restrict our considerations to a natural subclass of bounded random system of quadratic maps.

\begin{definition}\label{def:random_independent}
 A bounded random system of quadratic maps $\mathcal S$ is called a \emph{system of independent random quadratic maps} if the probability space $\Omega$ is the infinite product
 $V^{\mathbb N}$ for some Borel bounded set $V\subset \mathbb C$ and $\mathbb P$ is the (completed) product distribution generated by  some Borel distribution $\nu$ on $V$.
 \end{definition}

We need one more definition:

\begin{definition}[Definition 2.2 in \cite {LZ}.]\label{typ_fast_esc}
Let $\mathcal S$ be a bounded  system of independent random quadratic maps, with $V\subset\mathbb D(0,R)$. The system $\mathcal S$ 
is called \emph{typically fast escaping}
if there  exists $\gamma>0$ such that 
\begin{equation}\label{eq: fast_escaping}
\mathbb P\left(\left \{\omega\in\Omega: g_\omega(0)<\frac{1}{2^k}\right \}\right )<e^{-\gamma k}
\end{equation}
\end{definition}

Consequently,  for fast escaping system for almost every $\omega\in\Omega$ the critical point $0$ escapes to infinity under the iterates $f^n_\omega$. 
It was proved in \cite{LZ}
that for typically fast escaping systems the Julia set $J_\omega$ is almost surely totally disconnected. The same work also provides proof that the typically fast escaping property holds for many natural systems $\Omega$, such as products of disks of radius larger than $\frac{1}{4}$ or the main cardioid of the Mandelbrot set, each equipped with the uniform distribution.

\

In this section we prove the following theorem. 
 
\begin{theorem}\label{thm:dim_harm}
Let $\mathcal S$ be a bounded random system of independent quadratic maps. Assume additionally that $\mathcal S$ is typically fast escaping.

\

Then for $\mathbb P$-a.e. $\omega\in\Omega$ 

\begin{equation}\label{eq:dim_max}
\dim_H(\mu_\omega)=\frac{\log 2}{\chi}= \frac{\ln 2}{\ln 2+{\bf g}(0)}<1
\end{equation}
\end{theorem}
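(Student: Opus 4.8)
The plan is to establish the dimension formula via the standard "volume lemma" paradigm for conformal measures, which identifies $\dim_H(\mu_\omega)$ as the ratio of entropy to the Lyapunov exponent. Since the measure $\mu_\omega$ is the maximal measure with fiberwise invariance $\mu_{\sigma\omega} = \mu_\omega \circ f_\omega^{-1}$ and scaling $\mu_{\sigma\omega}(f_\omega(A)) = 2\mu_\omega(A)$, its natural entropy is $\ln 2$ (each map is two-to-one and the measure distributes mass equally over the two preimage branches). Thus the target is to prove the Hausdorff-dimension analogue of the formula $\dim_H(\mu_\omega) = \frac{\ln 2}{\chi}$, where $\chi = \ln 2 + \mathbf{g}(0)$ is the global Lyapunov exponent already computed in Proposition~\ref{prop:exponent}. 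Note that $\chi$ is a single $\mathbb{P}$-almost-everywhere constant (not $\omega$-dependent) precisely because of ergodicity of $\sigma$ applied to the integrable cocycle $\ln|f'_\omega|$.

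The key steps, in order, are as follows. First I would verify the lower bound $\dim_H(\mu_\omega) \geq \frac{\ln 2}{\chi}$ by showing that for $\mu_\omega$-a.e.\ $z$,
$$
\liminf_{r \to 0} \frac{\ln \mu_\omega(B(z,r))}{\ln r} \geq \frac{\ln 2}{\chi},
$$
and then invoking the Billingsley/Frostman mass-distribution principle. The idea is to track a point $z$ along the orbit $f^n_\omega(z)$, using that the measure of a preimage "cylinder" of generation $n$ is exactly $2^{-n}$, while its geometric size is controlled by the derivative $\prod_{j=0}^{n-1}|f'_{\sigma^j\omega}(f^j_\omega(z))|$. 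By the Birkhoff ergodic theorem applied to the global invariant measure $\mu$ on $\Omega \times \mathbb{C}$ (whose invariance is Proposition~\ref{prop:ergodic_mean}), the exponential growth rate of this derivative product is $\chi$ for $\mu$-a.e.\ $(\omega,z)$. Combining the $2^{-n}$ mass with the $e^{-n\chi}$ scale yields the ratio $\frac{\ln 2}{\chi}$. For the matching upper bound, I would exhibit, for $\mu_\omega$-a.e.\ $z$, a sequence of balls on which $\frac{\ln\mu_\omega(B(z,r))}{\ln r}$ is close to $\frac{\ln 2}{\chi}$ from above, which gives $\dim_H(\mu_\omega) \leq \frac{\ln 2}{\chi}$ by the converse part of the density-estimate criterion. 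Finally, the strict inequality $\frac{\ln 2}{\chi} < 1$ is immediate since $\chi = \ln 2 + \mathbf{g}(0) > \ln 2$, using that $\mathbf{g}(0) > 0$; the latter is guaranteed by the typically-fast-escaping hypothesis \eqref{eq: fast_escaping}, which forces $g_\omega(0) > 0$ on a set of positive measure and hence $\mathbf{g}(0) = \int_\Omega g_\omega(0)\,d\mathbb{P} > 0$.

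The main obstacle I expect is the geometric distortion control needed to pass rigorously between the combinatorial mass $2^{-n}$ and the Euclidean radius of the corresponding piece of $J_\omega$. In the autonomous case this is handled by Koebe-type distortion estimates on inverse branches that stay a definite distance from the critical value; in the non-autonomous totally disconnected regime one must ensure these estimates are uniform enough along $\mathbb{P}$-typical sequences $\omega$. This is exactly where the typically-fast-escaping assumption does real work: the exponential escape bound \eqref{eq: fast_escaping} guarantees that, almost surely, the orbit of the critical point $0$ escapes quickly, so that the inverse branches of $f^n_\omega$ near $J_\omega$ have uniformly bounded distortion (the postcritical set stays far from the Julia set). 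I would therefore spend most of the effort establishing a Borel–Cantelli argument from \eqref{eq: fast_escaping} showing that, for $\mathbb{P}$-a.e.\ $\omega$, the distortion of the relevant branches is controlled at all scales, after which the entropy-over-exponent computation closes both inequalities and yields \eqref{eq:dim_max}.
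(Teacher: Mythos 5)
Your skeleton (local-dimension estimates comparing the $\mu_\omega$-mass of pull-back components with their Koebe-controlled diameters, Birkhoff sums along the skew product, and $\chi=\ln 2+\mathbf{g}(0)$ from Proposition~\ref{prop:exponent}) matches the paper's, but two of your load-bearing steps have genuine gaps.

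First, you conclude that $\frac{1}{n}\ln|(f^n_\omega)'(z)|\to\chi$ for $\mu$-a.e.\ $(\omega,z)$ from the Birkhoff theorem applied to the invariant measure $\mu$, attributing the constancy of the limit to ergodicity of the base map $\sigma$. That is not enough: the derivative sums are Birkhoff sums of the skew product $F$, not of $\sigma$, and invariance of $\mu$ (Proposition~\ref{prop:ergodic_mean}) only yields convergence to the conditional expectation with respect to the $F$-invariant $\sigma$-algebra, which could be non-constant --- a skew product over an ergodic base need not be ergodic. The paper must, and does, prove ergodicity of $F$ with respect to $\mu$ (Proposition~\ref{prop:global_ergodic}), and this proof is a substantial piece of the whole argument: it needs the univalence-up-to-$K$-steps structure of Proposition~\ref{prop:complement} and an absolute-continuity comparison of fiberwise conditional measures of two putative invariant sets. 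Your proposal silently assumes exactly what that proposition establishes.

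Second, your plan to obtain, via Borel--Cantelli from \eqref{eq: fast_escaping}, distortion control of inverse branches \emph{at all scales} (``the postcritical set stays far from the Julia set'') fails for the very systems the theorem is about. Condition \eqref{eq: fast_escaping} only bounds the probability of exponentially small values of $g_\omega(0)$; it does not prevent $\mathbb{P}(g_\omega(0)<\varepsilon)>0$ for every fixed $\varepsilon>0$, which is the case for the motivating examples (uniform distribution on $\mathbb{D}(0,R)$ with $R>1/4$, or on the main cardioid). In that situation, by ergodicity of $\sigma$, for $\mathbb{P}$-a.e.\ $\omega$ one has $g_{\sigma^n\omega}(0)<\varepsilon$ for infinitely many $n$ --- indeed with positive frequency --- so the critical orbit approaches the Julia set infinitely often and no uniform Koebe control over all generations exists. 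What is actually available, and what the paper imports from \cite{LZ} (Theorem~\ref{thm:main_step} and Proposition~\ref{prop:complement}), is weaker: a constant $K$ and an a.s.\ defined sequence $k_n$ with $k_{n+1}/k_n\to 1$ such that each component of $(f^{k_n}_\omega)^{-1}(D)$ is mapped univalently by $f^{k_n-K}_\omega$. Consequently the mass of a generation-$k_n$ component is not ``exactly $2^{-k_n}$'' but only squeezed between $2^{-k_n}$ and $2^{-(k_n-K)}$, Koebe applies only at those times, and the intermediate scales are recovered by the interpolation \eqref{eq:compare_ratio}--\eqref{eq:dense3}, which is precisely where the density property $k_{n+1}/k_n\to 1$ is needed. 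Without this subsequence structure, neither of your two inequalities closes.
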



\begin{remark}
Using our notation, it is not difficult  to observe that  for every $\omega\in\mathbb D(0,R)^\mathbb N$ 
the Green function $g_\omega$ is H\"older continuous with any H\"older exponent smaller than 
$$\alpha_0=\frac{\ln 2}{\ln 2+\ln R_0}.$$






Now,   $\alpha_0$ -- H\"older continuity of the Green function allows to conclude directly  that the  Hausdorff dimension of the equilibrium measure satisfies
 $\dim_H(\mu_\omega)\ge\alpha_0$
(see \cite{ransford2}, Lemma 3.5).

 For $\mathbb P$-- typical $\omega$, Theorem~ \ref{thm:dim_harm}  gives, 
using also
Remark~\ref{rem1}, the common value of the dimension of $\mu_\omega$:

$$ \dim_H(\mu_\omega)=\frac{\ln 2}{\chi}> \frac{\ln 2}{\ln 2+\ln R_0}=\alpha_0.$$
\end{remark}

\

\

Before starting the proof of Theorem~\ref{thm:dim_harm}, we need to refer to more definitions and results from  \cite{LZ}.

\

Following \cite{LZ}, we introduce a constant $\tilde R_0\in (R_0,R_0^2-R)$, say, $ \tilde R_0=\frac{2 R_0-R}{2}$. 
Then for every $\omega\in\mathbb D(0,R)^\mathbb N$, $f_\omega^{-1}(\mathbb D_{\tilde R_0})\subset \mathbb D_{R_0}$.

\

For our application explained below  it is only important that $\tilde R_0>R_0$.

\

Put $\tilde D=\mathbb D_{\tilde R_0}$ and 
Put $D=\mathbb D_{R_0}$.
\begin{theorem}[see Theorem 11 in \cite{LZ}] \label{thm:main_step}
Let $V$ be a bounded Borel subset of $\mathbb C$, $V\subset \mathbb D(0,R)$. Let $R_0$ be the 
value assigned to $R$ as in Proposition~\ref{prop:log}.
Let $\nu$ be a Borel probability measure on $V$ and let $\mathbb P$ be the product distribution on $\Omega= V^
\mathbb N$, generated by $\nu$. 
If the system $\mathcal S$ is \emph{typically fast escaping}, i.e., if 
\eqref{eq: fast_escaping} holds, then there exist $N\in\mathbb N$ such that for $\mathbb P$--almost every $\omega\in\Omega$ the following holds:

(*) There is a sequence $k_n=k_n(\omega)\to\infty$ such that 
for  each component $D^{k_n}_j(\omega)$ of the set $\tilde D^{k_n}(\omega)=(f^{k_n}_\omega)^{-1}(\tilde D)$ the degree of  the map
$$f^{k_n}_\omega:\tilde D^{k_n}_j(\omega)\to \tilde D$$
is at most $N$.
\end{theorem}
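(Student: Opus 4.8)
The plan is to reduce the degree bound to a purely combinatorial count of how often the critical point enters the pullback ``tube'', and then to control that count via Birkhoff's ergodic theorem together with the fast--escaping tail estimate. First I would record the degree factorization. Fix $k$ and a component $U=\tilde D^{k}_j(\omega)$, and set $U_i:=f^i_\omega(U)$ for $0\le i\le k$, so that $U_0=U$, $U_k=\tilde D$, and $U_i$ is the component of $(f^{k-i}_{\sigma^i\omega})^{-1}(\tilde D)$ meeting $f^i_\omega(U)$. Since $|c(\cdot)|<R<\tilde R_0$, the target $\tilde D$ and, inductively, every $U_i$ is a simply connected Jordan domain, as set up in \cite{LZ}; the restriction $f^k_\omega|_U$ factors as the composition of the proper maps $f_{c_i}\colon U_i\to U_{i+1}$, each of degree $1$ or $2$. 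Because $f_{c_i}$ has the single critical point $0$, its degree on $U_i$ is $2$ exactly when $0\in U_i$ and $1$ otherwise, whence
$$\deg\bigl(f^k_\omega|_U\bigr)=2^{N(U)},\qquad N(U):=\#\{\,0\le i<k:\ 0\in f^i_\omega(U)\,\}.$$
So it suffices to find a sequence $k_n\to\infty$ along which $N(U)$ is bounded by a deterministic constant, uniformly in $U$.

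The key point is that $N(U)$ has a bound independent of the component. Indeed, if $0\in U_i$ then $f^{k-i}_{\sigma^i\omega}(0)\in f^{k-i}_{\sigma^i\omega}(U_i)=\tilde D$, i.e. the critical value of $f^k_\omega$ produced at level $i$ lands in $\tilde D$; this condition involves only $(\omega,i,k)$, not $U$. Introducing the escape time $\tau(\theta):=\min\{n\ge0:|f^n_\theta(0)|>\tilde R_0\}$ --- finite a.s. because $0\in\mathcal A_\theta$, and such that the orbit of $0$ stays in $\tilde D$ exactly on $\{0,\dots,\tau(\theta)-1\}$ by the monotone escape \eqref{eq:1} --- the condition $f^{k-i}_{\sigma^i\omega}(0)\in\tilde D$ is equivalent to $\tau(\sigma^i\omega)>k-i$. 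Hence, uniformly over all components,
$$N(U)\ \le\ M_k(\omega):=\sum_{i=0}^{k-1}\1\bigl[\tau(\sigma^i\omega)>k-i\bigr].$$

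Next I would verify $\tau\in L^1(\mathbb P)$, which is where the fast--escaping hypothesis enters. If $\tau(\omega)>m$ then $|f^n_\omega(0)|\le\tilde R_0$ for all $n\le m$, so iterating the invariance \eqref{eq:invariance} and using the uniform boundedness of $g_\omega$ on $\mathbb D_{\tilde R_0}$ (cf.\ \eqref{eq:est_green_above}) gives $g_\omega(0)=2^{-m}g_{\sigma^m\omega}(f^m_\omega(0))\le 2^{-m}(\log\tilde R_0+C)$. Thus $\{\tau>m\}\subset\{g_\omega(0)<2^{-(m-c_0)}\}$ for a fixed constant $c_0$, and \eqref{eq: fast_escaping} yields $\mathbb P(\tau>m)\le e^{\gamma c_0}e^{-\gamma m}$; summing the tails gives $\mathbb E[\tau]<\infty$.

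Finally comes the ergodic step. Interchanging the order of summation,
$$\sum_{k=1}^{K}M_k(\omega)=\sum_{j=0}^{K-1}\min\bigl(\tau(\sigma^j\omega)-1,\,K-j\bigr)\ \le\ \sum_{j=0}^{K-1}\tau(\sigma^j\omega),$$
and by Birkhoff's theorem the right-hand side divided by $K$ converges a.s.\ to $\mathbb E[\tau]$. Hence $\liminf_K \tfrac1K\sum_{k\le K}M_k(\omega)\le\mathbb E[\tau]$, and since $M_k\ge0$ this forces $\liminf_k M_k(\omega)\le\mathbb E[\tau]$ almost surely; as $M_k$ is integer-valued, for $\mathbb P$-a.e.\ $\omega$ there is a sequence $k_n\to\infty$ with $M_{k_n}(\omega)\le M:=\lfloor\mathbb E[\tau]\rfloor$, so that every component satisfies $\deg(f^{k_n}_\omega|_{\cdot})\le 2^{M}=:N$, a deterministic constant, proving Theorem~\ref{thm:main_step}. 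I expect the main obstacle to be the geometric input of the first paragraph --- justifying that all the tube components $U_i$ are simply connected, so that the degree is \emph{exactly} $2^{N(U)}$ and branching is cleanly detected by the critical value entering $\tilde D$; once that is in place, the probabilistic core (the $L^1$ tail of $\tau$ and the $\liminf$ extraction from Birkhoff) is routine.
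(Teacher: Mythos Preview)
The paper does not prove Theorem~\ref{thm:main_step}; it quotes it from \cite{LZ} (Theorem~11 there) and only indicates, in the remarks after Proposition~\ref{prop:complement}, that in \cite{LZ} the sequence $k_n$ is produced as the sequence of visit times of an ergodic orbit to a fixed positive-measure set. Your argument is a correct, self-contained proof of the theorem as stated: the factorization $\deg(f^k_\omega|_U)=2^{N(U)}$ with $N(U)\le M_k(\omega)$ is sound (and, incidentally, does not require simple connectivity of the $U_i$ --- only that each $U_i$ is a component of $f_{c_i}^{-1}(U_{i+1})$, so that $f_{c_i}\colon U_i\to U_{i+1}$ is proper of degree $1$ or $2$ according to whether $0\in U_i$); the tail bound $\mathbb P(\tau>m)\lesssim e^{-\gamma m}$ obtained from \eqref{eq: fast_escaping} via \eqref{eq:invariance} and \eqref{eq:est_green_above} gives $\tau\in L^1$; and the Ces\`aro estimate $\frac1K\sum_{k\le K}M_k\le\frac1K\sum_{j<K}\tau(\sigma^j\omega)\to\mathbb E[\tau]$ forces $\liminf_k M_k(\omega)\le\mathbb E[\tau]$, hence a sequence $k_n\to\infty$ with uniform degree bound $N=2^{\lfloor\mathbb E\tau\rfloor}$.

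The route sketched by the paper for \cite{LZ} is organized differently: instead of averaging $M_k$ over $k$, one constructs a measurable set $G\subset\Omega$ of positive measure such that membership of $\sigma^k\omega$ in $G$ guarantees the degree control at time $k$, and lets $k_n$ be the successive visit times to $G$. The payoff of that packaging is density: Birkhoff applied to $\1_G$ gives visits of positive frequency, hence $\lim_n(k_{n+1}-k_n)/k_n=0$, which is precisely the extra conclusion \eqref{eq:dense4} of Proposition~\ref{prop:complement} used later in the proof of Theorem~\ref{thm:dim_harm}. Your $\liminf$ extraction is slicker for Theorem~\ref{thm:main_step} alone, but does not a priori yield a sequence of positive density; if you intend to also cover Proposition~\ref{prop:complement}, you would need to upgrade the argument (for instance by showing that the set $\{k:M_k(\omega)\le M\}$ has positive lower density, which your Ces\`aro bound in fact gives once $M>\mathbb E[\tau]$).
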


Actually,  the proof of Theorem 11 in \cite{LZ} says more: 

\begin{prop}[Complement to Theorem~ \ref{thm:main_step}]\label{prop:complement}
 There exists $K\in\mathbb N$ such that for  $\mathbb P$--almost every $\omega\in\Omega$ the following holds:

 (**)For  each component $ \tilde D^{k_n}_j(\omega)$ of the set $ (f^{k_n}_\omega)^{-1}( D)$ the degree of  the map
$$f^{k_n-K}_\omega:\tilde D^{k_n}_j(\omega)\to f^{k_n-K}_\omega( \tilde D^{k_n}_j(\omega))$$
is equal to one.

Moreover, for $\mathbb P$- a.e. $\omega$ the sequence $k_n=k_n(\omega)$ may be chosen in such a way that
\begin{equation}\label{eq:dense4}
 \lim_{n\to\infty}\frac{k_{n+1}-k_n}{k_n}=0
\end{equation}
 \end{prop}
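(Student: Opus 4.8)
The plan is to choose the sequence $(k_n)$ among a class of times at which the branching of $f^{k_n}_\omega$ on the components is confined to the last $K$ steps, and then to verify that this class is rich enough to meet the multiplicative density condition \eqref{eq:dense4}. Fix a component $W=\tilde D^{k}_j(\omega)$ of $(f^{k}_\omega)^{-1}(D)$ and write $W_\ell:=f^\ell_\omega(W)$ for $0\le\ell\le k$. Since $f_\omega^{-1}(\tilde D)\subset\mathbb D_{R_0}$, all the sets $W_\ell$ are contained in $\mathbb D_{R_0}$, each $W_\ell$ is simply connected, and $f_{\sigma^\ell\omega}\colon W_\ell\to W_{\ell+1}$ is proper of degree $2$ exactly when $0\in W_\ell$ (a \emph{branch time}) and of degree $1$ otherwise. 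Consequently $\deg(f^{k}_\omega|_W)=2^{b}$, where $b$ is the number of branch times $\ell\in\{0,\dots,k-1\}$, and $f^{k-K}_\omega|_W$ is univalent if and only if no branch time is $\le k-K-1$. The whole statement therefore reduces to locating the branch times of all components simultaneously.

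First I would characterize branch times through the escape of the critical point. If $0\in W_\ell$, pick $w\in W$ with $f^\ell_\omega(w)=0$; then $f^{j}_{\sigma^\ell\omega}(0)=f^{\ell+j}_\omega(w)\in\mathbb D_{R_0}$ for $0\le j\le k-\ell$, so the orbit of $0$ under $\sigma^\ell\omega$ stays in $\overline{\mathbb D}_{\tilde R_0}$ for $k-\ell$ steps. Writing $\tau_t:=\min\{m\ge1:f^m_{\sigma^t\omega}(0)\notin\overline{\mathbb D}_{\tilde R_0}\}$ for the escape time of the critical point of the shift $\sigma^t\omega$, a branch time at $\ell$ forces $\tau_\ell>k-\ell$. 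Moreover \eqref{eq:invariance} together with the uniform bound $M_0:=\sup_{\omega,\,|z|\le\tilde R_0}g_\omega(z)<\infty$ (finite by Proposition~\ref{prop:def_green2}) gives $g_{\sigma^\ell\omega}(0)=2^{-(k-\ell)}g_{\sigma^{k}\omega}\big(f^{k-\ell}_{\sigma^\ell\omega}(0)\big)\le 2^{-(k-\ell)}M_0$, so the typically fast escaping hypothesis \eqref{eq: fast_escaping} yields $\mathbb P(\tau_0>s)\le\mathbb P\big(g_\omega(0)\le 2^{-s}M_0\big)\le e^{-\gamma(s-c_0)}$ with $c_0:=\lceil\log_2 M_0\rceil$.

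Call a time $k$ \emph{$K$-good} if $\tau_\ell\le k-\ell$ for every $\ell\le k-K-1$. By the previous paragraph every branch time of every component at a $K$-good time lies in $\{k-K,\dots,k-1\}$; hence $f^{k-K}_\omega$ is univalent on each component and $\deg(f^{k}_\omega|_W)\le 2^{K}$. Taking $N=2^{K}$, the $K$-good times are thus admissible for Theorem~\ref{thm:main_step}, and it remains only to extract from them a sequence $k_n\to\infty$ obeying \eqref{eq:dense4}. A Borel--Cantelli argument over $s$ using the tail bound above shows $\sup_{\ell\le k}\tau_\ell=O(\log k)$ almost surely, so the set of bad (non-$K$-good) times is the union $\bigcup_\ell[\ell+K+1,\ell+\tau_\ell]$ of intervals of length $O(\log k)$; since ``$k$ is bad'' equals $h(\sigma^k\omega)$ for the stationary indicator $h$ of the event $\{\exists\,s>K:\tau_{-s}>s\}$, the ergodic theorem gives the bad times a density $\bar\delta_K\le\sum_{s>K}e^{-\gamma(s-c_0)}$, which tends to $0$ as $K\to\infty$.

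The main obstacle is to upgrade this to the multiplicative gap condition $(k_{n+1}-k_n)/k_n\to0$, since positive density of good times alone does not rule out occasional linear gaps. The plan is to prove that, almost surely, every window $[m,m+C(\log m)^2]$ with $m$ large contains a $K$-good time; the gaps between consecutive good times are then $O((\log k_n)^2)=o(k_n)$, which gives \eqref{eq:dense4}. For the product (independent) system this is where I expect the real work: truncating the escape times at level $C\log m$ --- legitimate because $\sup_{\ell\le2m}\tau_\ell=O(\log m)$ almost surely --- makes the events $\{k\ \text{bad}\}$ at times separated by more than $2C\log m$ depend on the coordinates $c_\ell$ in disjoint blocks, hence independent; selecting about $c'\log m$ such well-separated times inside the window bounds the probability that all of them are bad by $\bar\delta_K^{\,c'\log m}=m^{-c'\log(1/\bar\delta_K)}$, which is summable along $m=2^{j}$ once $K$ is fixed large enough that $c'\log(1/\bar\delta_K)>1$. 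Borel--Cantelli then eliminates, almost surely, all large windows free of good times, and the resulting sequence $(k_n)$ satisfies both (**) (with the uniform constant $K$) and \eqref{eq:dense4}. The delicate points are this decoupling estimate --- where the infinite-range dependence of $\tau_\ell$ on the future coordinates must be absorbed into the exponential tail --- and the bookkeeping guaranteeing that one constant $K$ works for almost every $\omega$ at once.
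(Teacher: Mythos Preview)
Your derivation of (**) is correct and is essentially a reconstruction of the argument behind Theorem~11 in \cite{LZ}: you identify branch times with slow critical escape, use the typically-fast-escaping tail to bound their occurrence, and conclude that at a $K$-good time all branching is confined to the last $K$ steps. This matches the paper's intent, which simply cites \cite{LZ}.

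Where you diverge is in the treatment of \eqref{eq:dense4}. You already observe that ``$k$ is bad'' equals $h(\sigma^k\omega)$ for a fixed measurable indicator $h$, i.e.\ the good times are precisely the visits of the orbit $(\sigma^k\omega)_{k\ge0}$ to the set $A=\{h=0\}$, which has positive measure once $K$ is large. At that point the paper's argument is a one-liner: by Birkhoff's ergodic theorem the $n$-th visit time satisfies $k_n/n\to 1/\mathbb P(A)$ almost surely, hence $k_{n+1}/k_n\to1$ and $(k_{n+1}-k_n)/k_n\to0$. Your statement that ``positive density alone does not rule out occasional linear gaps'' is true for an arbitrary set of integers, but not for the sequence of visit times to a fixed positive-measure set under an ergodic automorphism --- the full asymptotic $k_n\sim n/\mathbb P(A)$, not just the Ces\`aro density, is what Birkhoff gives you. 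So the entire decoupling/Borel--Cantelli paragraph is correct in outline but unnecessary: you had already proved what you needed two sentences earlier. The paper's route is shorter and avoids the delicate bookkeeping you flag at the end; your route, on the other hand, yields the quantitative gap bound $k_{n+1}-k_n=O((\log k_n)^2)$, which is stronger than required.
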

We refer to the proof of Theorem 11 in \cite{LZ}. The sequence $k_n$ corresponds to consecutive visits of a trajectory to some positive measure set, under some ergodic automorphism. The estimate \eqref{eq:dense} is then an easy  consequence of the ergodic theorem.

Before proving Theorem ~\ref{thm:dim_harm} we check the announced  ergodicity of the global map $F$. The proof of Proposition~\ref{prop:global_ergodic}  is based on Proposition~\ref{prop:complement} together with  rather standard reasoning.


\begin{prop}\label{prop:global_ergodic}
 Let $\mathcal S$ be a  typically fast escaping (see Definition \ref{typ_fast_esc}) bounded system of independent random quadratic maps. 
 Then the global map $F$ defined in the formula \eqref{eq:skew} is ergodic with respect to the global measure $\mu$.
\end{prop}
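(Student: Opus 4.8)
The plan is to show that every $F$-invariant set has $\mu$-measure $0$ or $1$, splitting the work into a base reduction and a fiberwise zero-one argument. First I would take a set $A\subset\Omega\times\C$ with $F^{-1}(A)=A$ and write $A_\omega=\{z:(\omega,z)\in A\}$ for its fibers. Fubini for the global measure (Proposition~\ref{prop:global_measure}) together with $F$-invariance gives, for $\mathbb P$-a.e.\ $\omega$, the fiberwise relation $A_\omega=f_\omega^{-1}(A_{\sigma\omega})$, and hence, iterating (countably many a.e.\ conditions), $A_\omega=(f^k_\omega)^{-1}(A_{\sigma^k\omega})$ for every $k$. Setting $\phi(\omega)=\mu_\omega(A_\omega)$ and using the transformation rule \eqref{eq:invariance2}, i.e.\ $(f_\omega)_*\mu_\omega=\mu_{\sigma\omega}$, I get $\phi(\omega)=\mu_\omega(f_\omega^{-1}(A_{\sigma\omega}))=\mu_{\sigma\omega}(A_{\sigma\omega})=\phi(\sigma\omega)$. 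Since $\sigma$ is ergodic, $\phi$ is $\mathbb P$-a.e.\ equal to a constant $c$, and $\mu(A)=\int_\Omega\phi\,d\mathbb P=c$. It remains to prove $c\in\{0,1\}$.

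For the fiberwise zero-one step I would run a martingale (Lebesgue density) argument along the preimage components of the fixed disc $\tilde D=\mathbb D_{\tilde R_0}$. Because $f_\theta^{-1}(\tilde D)\subset\tilde D$ for every parameter $\theta$ (this is exactly how $\tilde R_0$ was chosen), the sets $(f^{k}_\omega)^{-1}(\tilde D)$ are nested and decreasing in $k$, so their connected components form a refining sequence of measurable partitions $\mathcal P_k$ of a full $\mu_\omega$-measure subset of $J_\omega$. Granting that the diameters of these components tend to $0$, so that $\bigvee_k\mathcal P_k$ generates the Borel $\sigma$-algebra, the martingale convergence theorem yields, for $\mu_\omega$-a.e.\ $z_0$, that $\mu_\omega(A_\omega\cap W_k(z_0))/\mu_\omega(W_k(z_0))\to \mathbf 1_{A_\omega}(z_0)$, where $W_k(z_0)$ is the component of $(f^k_\omega)^{-1}(\tilde D)$ containing $z_0$. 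In particular, assuming $c>0$ (so that $\mu_\omega(A_\omega)=c>0$ for a.e.\ $\omega$) and fixing a density point $z_0\in A_\omega$, the relative measure of the complement $A_\omega^c$ in $W_k(z_0)$ tends to $0$.

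The contradiction comes from the bounded-degree structure. Along the good subsequence $k_n=k_n(\omega)$ provided by Theorem~\ref{thm:main_step} and Proposition~\ref{prop:complement}, the map $f^{k_n}_\omega\colon W_n\to\tilde D$, with $W_n=W_{k_n}(z_0)$, has degree at most $N$ and is univalent after dropping the last $K$ steps. On the univalent part the maximal measure scales with the exact Jacobian $2^{k_n-K}$ by iterating \eqref{eq:maximal}, so there is no measure distortion there; the last $K$ steps, being an at-most-$N$-to-$1$ composition of $K$ quadratic maps that pushes $\mu$ forward to $\mu$, distort relative measures by a factor at most $N$. Putting these together gives, for every Borel $E\subseteq\tilde D$, that $\mu_\omega((f^{k_n}_\omega)^{-1}(E)\cap W_n)/\mu_\omega(W_n)$ is comparable to $\mu_{\sigma^{k_n}\omega}(E)$ up to the factor $N$ (here $\mu_{\sigma^{k_n}\omega}(\tilde D)=1$ since $J_{\sigma^{k_n}\omega}\subset\tilde D$). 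Taking $E=A_{\sigma^{k_n}\omega}^c$ and using $A_\omega^c\cap W_n=(f^{k_n}_\omega)^{-1}(A_{\sigma^{k_n}\omega}^c)\cap W_n$ with $\mu_{\sigma^{k_n}\omega}(A^c_{\sigma^{k_n}\omega})=1-c$, the lower bound reads $\mu_\omega(A_\omega^c\cap W_n)/\mu_\omega(W_n)\ge (1-c)/N$. Since the left-hand side tends to $0$, I conclude $c=1$; hence $c\in\{0,1\}$ and $F$ is ergodic.

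I expect the main obstacle to be the two geometric inputs feeding the density step: verifying that the components of $(f^k_\omega)^{-1}(\tilde D)$ genuinely shrink to points, so that the refining partitions generate, and making the constant-Jacobian transfer across the non-univalent final $K$ steps fully rigorous, including the exact degree-to-measure bookkeeping. The shrinking of components is closely tied to the total disconnectedness of $J_\omega$ established in \cite{LZ} for typically fast escaping systems, and I would extract it from the same construction that yields Theorem~\ref{thm:main_step}.
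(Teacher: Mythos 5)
Your proposal is correct, shares its dynamical core with the paper's proof, but extracts the zero--one conclusion by a genuinely different mechanism. Both arguments begin identically: the fiberwise invariance $A_\omega=f_\omega^{-1}(A_{\sigma\omega})$ plus $(f_\omega)_*\mu_\omega=\mu_{\sigma\omega}$ make $\omega\mapsto\mu_\omega(A_\omega)$ $\sigma$-invariant, hence a.e.\ constant; and both then feed on Proposition~\ref{prop:complement} (univalence of $f^{k_n-K}_\omega$ on components) together with the exact Jacobian $2^k$ from \eqref{eq:maximal}. The paper, however, argues by contradiction with \emph{two} disjoint invariant sets $A,B$, forms the conditional measures $\mu_{A,\omega},\mu_{B,\omega}$, proves $\mu_{A,\sigma^k\omega}(S')\le 2^k\mu_{A,\omega}(S)$ for proper maps via a slit-and-branch trick, obtains the uniform ratio bound $\mu_{A,\omega}(S_{k_n}(z))/\mu_{B,\omega}(S_{k_n}(z))\le 2^K$ on all components, and upgrades this to $\mu_{A,\omega}\ll\mu_{B,\omega}$ using $C=\bigcap_n S_n(C)$, contradicting disjointness; martingale convergence never appears. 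You instead run a Lebesgue-density/martingale zero--one law for a single invariant set along the refining partitions by components of $(f^k_\omega)^{-1}(\tilde D)$ and contradict density at a point of $A_\omega$ with the uniform lower bound $(1-c)/N$. Your degree-to-measure bookkeeping in the last $K$ steps is sound and can be made rigorous exactly as in the paper: $f^{k_n-K}_\omega$ maps $W_n$ conformally onto a full component $B'$ of $(f^K_{\sigma^{k_n-K}\omega})^{-1}(\tilde D)$ (proper maps send components onto components and degrees multiply), so $f^K$ on $B'$ is proper of degree $d\le N$ onto $\tilde D$; every $w\in\tilde D$ thus has between $1$ and $N$ preimages in $B'$, and with Jacobian $2^K$ this yields $\mu_\omega\bigl((f^{k_n}_\omega)^{-1}(E)\cap W_n\bigr)\ge 2^{-k_n}\mu_{\sigma^{k_n}\omega}(E)$ and $\mu_\omega(W_n)\le N\,2^{-k_n}$, i.e.\ exactly your constant. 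The one input you flag as a gap --- components shrinking to points so that the partitions generate --- is the same fact the paper uses implicitly in the line $C=\bigcap_n S_n(C)$: it holds because $\bigcap_n\overline{S_n(z)}$ is a nested intersection of compact connected sets, hence a connected subset of the a.s.\ totally disconnected $K_\omega$, hence a point. What each route buys: the paper's two-set absolute-continuity argument avoids any discussion of generating $\sigma$-algebras and martingale convergence; yours avoids the conditional-measure apparatus and proves the zero--one law directly, at the modest cost of justifying generation. (Neither argument actually needs \eqref{eq:dense4}, which is reserved for Theorem~\ref{thm:dim_harm}.)
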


\begin{proof}[Proof of Proposition \ref{prop:global_ergodic}]
Assume the contrary, that is that $F$ is not ergodic with respect to $\mu$. Then there exist two $F$- invariant measurable  disjoint sets $A$, $B$ of positive measure $\mu$. Let us denote by $A_\omega$ the preimage of the projection $\pi^{-1}(\omega)$ where $\pi : A \to \Omega$. We define the set $B_{\omega}$ analogously. Then for each $\omega \in \Omega$ the sets $A_{\omega}, B_{\omega}$ are Borel measurable and the functions $\omega \mapsto \mu_{\omega}(A_{\omega}), \omega \mapsto \mu_{\omega}(B_{\omega})$ are $\mathbb{P}$-measurable (refer for example to \cite{crauel}, Corollary 3.4). 
 
 Invariance of $A,B$ under $F$ implies that 
 $$
 f^{-1}_{\omega} (A_{\sigma \omega}) = A_{\omega}, \, f^{-1}_{ \omega} (B_{\sigma \omega}) = B_{\omega}.
 $$
Moreover invariance of $\mu$ implies that for $\mathbb{P}$ almost every $\omega \in \Omega$ we have
$$
\mu_{\omega}(f^{-1}_{ \omega}(A_{\sigma \omega})) = \mu_{\sigma \omega} (A_{\sigma \omega})$$ and $$\mu_{\omega}(f^{-1}_{ \omega}(B_{\sigma \omega})) = \mu_{\sigma \omega} (B_{\sigma \omega}).
$$
From the above we have that the functions
$$
\varphi (\omega) = \mu_{\omega} (A_{\omega})\quad\text{and}\quad
   \psi (\omega) = \mu_{\omega} (B_{\omega})
$$
are $\sigma$-invariant, and thus, by ergodicity of $\sigma$, constant $\mathbb{P}$-almost everywhere.
 
 Consider two Borel probability measures $\mu_A$ and $\mu_B$ , defined by their fiber measures as follows:
 
 $$\mu_{A,\omega}(F)=\frac{\mu_\omega(A_\omega\cap F)}{\mu_\omega(A_\omega)},$$
and, analogously
 $$\mu_{B,\omega}(F)=\frac{\mu_\omega(B_\omega\cap F)}{\mu_\omega(B_\omega)},$$
 where $F$ is a Borel set.
 Both $(\mu_{A,\omega})_{\omega \in \Omega}, (\mu_{B,\omega})_{\omega \in \Omega}$ are random probability measures. Indeed, to confirm that for $(\mu_{A,\omega})_{\omega \in \Omega}$ we need to check that the function $$\omega \mapsto \mu_{\omega} (A_{\omega} \cap F)$$ is $\mathbb{P}$-measurable. This follows again from \cite{crauel}, Corollary 3.4., applied to the set $(\Omega \times F) \cap A$. 
 
 Note that if $S \subset \mathbb{C}$ is a Borel measurable set on which $f_{\omega}$ is injective then from the fact that $\varphi (\omega)$ is constant almost everywhere and from \eqref{eq:maximal} we get
 $$
 \mu_{A,\sigma\omega}(f_\omega(S))=2\cdot \frac{\mu_\omega(A_\omega)}{\mu_{\sigma\omega}(A_{\sigma\omega})}\mu_{A,\omega}(S) = 2 \mu_{A, \omega} (S).
 $$
 Similarly if $f^k_{\omega}$ is injective on $S$ then
 $$
 \mu_{A, \sigma^k \omega} (f^k_{\omega}(S)) = 2^k \mu_{A, \omega} (S).
 $$
 Clearly the above equalities also hold if we replace the set $A$ with $B$. Finally if $S,S'$ are topological disks and $f^k_{\omega}: S \mapsto S'$ is a proper map of some degree $d \geqslant 1$ then
 $$
 \mu_{A, \sigma^k \omega} (S') \leqslant 2^k \mu_{A, \omega} (S)\quad\text{and}\quad \mu_{B, \sigma^k \omega} (S') \leqslant 2^k \mu_{B, \omega} (S)
 $$
To prove the above note first that the map $f^k_{\omega} : S \mapsto S'$ is a branched covering map of degree $d$. Since the measure $\mu_{\omega}$ has no atoms, one can find an arc $L \subset S'$ with one endpoint inside $S'$ and the other in $\partial S'$, such that $L$ contains all critical values and $\mu_{\sigma^k \omega}(L) = 0$. The set $S' \setminus L$ is simply connected. Choose a branch of $(f^k_{\omega})^{-1}$ defined on $S' \setminus L$ and mapping this set onto its image  $S'' \subset S$. Then $f^k_{\omega} : S'' \mapsto S' \setminus L$ is injective, and thus indeed we get
$$
\mu_{A, \sigma^k \omega} (S') = \mu_{A, \sigma^k \omega} (S' \setminus L) = 2^k \mu_{A, \omega} (S'') \leqslant 2^k \mu_{A, \omega} (S),
$$
and,  obviously, the same holds for the measures $\mu_{B, \omega}$.

\

Now denote as before $D = D_{R_0}$. Let $z \in \mathcal{J}_{\omega}$ and let $S_k (z)$ be the connected component of $(f^{k}_{\omega})^{-1}(D)$ containing $z$. Choose an arbitrary $\omega \in \Omega$ for which the statement of Proposition \ref{prop:complement} holds and let $k_n$ be the sequence from this Proposition which is assigned to $\omega$. Let $z \in \mathcal{J}_{\omega}$. Then the map $$f^{k_n - K}_{\omega} : S_{k_n} (z) \mapsto f^{k_n - K}_{\omega} (S_{k_n}(z))$$ is injective, and therefore we have
$$
\mu_{A , \omega } (S_{k_n}(z)) = \frac{1}{2^{k_n -K}} \mu_{A, \sigma^{k_n - K}\omega} (f^{k_n - K}_{\omega} (S_{k_n}(z))) \leqslant \frac{1}{2^{k_n - K}}.
$$
For the measure $\mu_{B, \omega} (S_{k_n}(z))$ we will use the following lower estimate
$$
\mu_{B,\omega}(S_{k_n}(z)) \geqslant \frac{1}{2^{k_n}} \mu_{\sigma^{k_n}\omega , B} (D) = \frac{1}{2^{k_n}}. 
$$
Combining the above we obtain
\begin{equation}\label{eq:qwerty}
\frac{\mu_{A,\omega}(S_{k_n}(z))}{\mu_{B,\omega}(S_{k_n}(z))} \leqslant 2^K.
\end{equation}
We shall show that this implies absolute continuity of the measure $\mu_{A,\omega}$ with respect to $\mu_{B,\omega}$. 

 Consider a set $C \subset \mathcal{J}_{\omega}$, such that $\mu_{B, \omega} (C) = 0$. Set $S_n (C) = \bigcup_{z \in C} S_n (z)$ and now since for almost every $\omega$ the set $\mathcal{J}_{\omega}$ is totally disconnected, we have $$C = \bigcap_{n}S_n (C). $$
This yields
$$
\mu_{A,\omega}(C) = \lim\limits_{n \rightarrow \infty} \mu_{A, \omega} (S_n (C))
$$
and analogously
$$
\mu_{B,\omega}(C) = \lim\limits_{n \rightarrow \infty} \mu_{B, \omega} (S_n (C)).
$$
Thus for every $\delta > 0 $ there exists $n$ such that $$\mu_{B, \omega} (S_{k_n}(C)) < \delta.$$ A straightforward application of \eqref{eq:qwerty} implies that we also have
$$
\mu_{A,\omega}(S_{k_n} (C)) < \delta 2^K
$$
and thus also
$$
\mu_{A,\omega}(C) < \delta 2^K.
$$
Since $\delta$ is arbitrarily small, we have $\mu_{A, \omega} (C) = 0$ and thus indeed, $\mu_{A,\omega}$ is absolutely continous with respect to $\mu_{B, \omega}$, which is a contradiction, since for almost all $\omega$ we have $A_{\omega} \cap B_{\omega} = \emptyset$.
\end{proof}

\begin{proof}[Proof of Theorem \ref{thm:dim_harm}]

Choose $\omega\in\Omega$ for which satisfying the assertion of Proposition~\ref{prop:complement} holds.
Let $z\in J_\omega$. Consider the sequence $D^{k_n}_j(z)$ of connected components of $(f_\omega^{k_n})^{-1}(D)$, containing the point $z$.  Every such component $D^{k_n}_j(z)$
is mapped by 
$f_\omega^{k_n-K}$ univalently onto its image $B$, which is some component in the collection $\{D^K_i\}$ of connected components of $f^{-K}(D)$.

The following lemma is then a straightforward consequence of the Koebe distortion theorem.

\begin{lemma}\label{lem:balls}
There exists $C>0$ such that the following holds.  Choose an arbitrary  $\omega\in\Omega$ satisfying  Proposition~\ref{prop:complement} and $z\in J_\omega$.
Let $k_n=k_n(\omega)$ be the sequence 
described in the assertion of Proposition ~\ref{prop:complement}. Then 

$$\mathbb D \left (z,\frac{1}{C} \cdot \frac{1}{|(f^{k_n-K}_\omega)'(z)|}\right )\subset D^{k_n}_j(z)\subset \mathbb D \left (z,C\cdot \frac{1}{|(f^{k_n-K}_\omega)'(z)|}\right )$$
 
\end{lemma}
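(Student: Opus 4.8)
The plan is to apply the Koebe distortion theorem to the inverse branch of $f^{k_n-K}_\omega$, so the first task is to produce a univalent map and identify its derivative. By Proposition~\ref{prop:complement} the map $f^{k_n-K}_\omega$ is injective on $D^{k_n}_j(z)$, hence it admits a univalent inverse branch $\psi$ carrying the image $B:=f^{k_n-K}_\omega(D^{k_n}_j(z))$ back onto $D^{k_n}_j(z)$, with $\psi(w)=z$ and $\psi'(w)=1/(f^{k_n-K}_\omega)'(z)$, where $w:=f^{k_n-K}_\omega(z)\in J_{\sigma^{k_n-K}\omega}$. Since $z$ lies in the interior of a domain of injectivity, $(f^{k_n-K}_\omega)'(z)\neq 0$, so both radii in the asserted inclusions are finite and positive.

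Next I would establish that the target domains $B$ have geometry that is uniform in $\omega$ and $n$. Writing $f^{k_n}_\omega=g\circ f^{k_n-K}_\omega$ with $g:=f^{K}_{\sigma^{k_n-K}\omega}$, the set $B$ is the component of $g^{-1}(D)$ containing $w$, and I enlarge it to $\tilde B$, the component of $g^{-1}(\tilde D)$ containing $w$ (recall $\overline D\subset\tilde D$). Because $K$ is the fixed constant from Proposition~\ref{prop:complement} and every parameter lies in $\mathbb D(0,R)$, the maps $g$ range over a compact family of degree-$2^K$ polynomials with bounded coefficients. Consequently $\tilde B\setminus\overline B$ contains a collar of modulus at least $\tfrac{1}{2^{K}}\,\mathrm{mod}(\tilde D\setminus\overline D)>0$ (the modulus inequality for the proper map $g$), all the $B$ sit inside $\mathbb D_{R_0}$, and there are constants $0<r_1<r_2$, depending only on $R,R_0,\tilde R_0,K$, with $\mathbb D(w,r_1)\subset B\subset\mathbb D(w,r_2)$. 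I would also note that $\psi$ extends univalently to $\tilde B$, i.e.\ that the first $k_n-K$ maps stay injective on the larger component $(f^{k_n}_\omega)^{-1}(\tilde D)$, by the same argument that yields the degree-one statement of Proposition~\ref{prop:complement}, since the branching of $f^{k_n}_\omega$ is carried by the final $K$ iterates.

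With $\psi$ univalent on $\tilde B$ and a collar of definite modulus separating $B$ from $\partial\tilde B$, the Koebe distortion theorem shows that $|\psi'|$ is comparable to $|\psi'(w)|$ throughout $B$ up to a universal factor, while the Koebe one-quarter theorem gives a definite lower bound for the image of the round disk $\mathbb D(w,r_1)$. Combining these with the sandwich $\mathbb D(w,r_1)\subset B\subset\mathbb D(w,r_2)$ yields
$$\mathbb D\!\left(z,\tfrac{1}{C}\,|\psi'(w)|\right)\subset\psi(B)=D^{k_n}_j(z)\subset\mathbb D\!\left(z,C\,|\psi'(w)|\right)$$
for a constant $C$ absorbing all the uniform geometric factors, and since $|\psi'(w)|=1/|(f^{k_n-K}_\omega)'(z)|$ this is exactly the claim.

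The main obstacle is the middle step: proving that the domains $B$ and their enlargements $\tilde B$ carry a uniformly bounded geometry, so that the Koebe constant $C$ can be chosen independently of $\omega$ and $n$. This uniformity rests entirely on $K$ being a fixed finite number, so that only a compact family of degree-$2^{K}$ polynomials with bounded coefficients is involved, together with the nesting $\overline D\subset\tilde D$ supplying the collar; once this is in hand the distortion estimate is the routine application of Koebe that the statement advertises.
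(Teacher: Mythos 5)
Your proposal is correct and takes essentially the same route as the paper: the paper simply notes that $D^{k_n}_j(z)$ is mapped by $f^{k_n-K}_\omega$ univalently onto a component $B$ of $(f^K)^{-1}(D)$ and then declares the lemma ``a straightforward consequence of the Koebe distortion theorem.'' Your write-up fills in exactly the uniformity ingredients this one-liner relies on --- the compact family of degree-$2^K$ polynomials, the collar coming from $\tilde D\setminus\overline D$ (which is why the paper introduced $\tilde R_0>R_0$), the univalent extension of the inverse branch, and the fact that $w\in J_{\sigma^{k_n-K}\omega}$ gives bounded geometry of $B$ --- so it is a faithful, more detailed rendering of the intended argument.
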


\

\


\




Now,  note that the measure $\mu_\omega$ of such component $D^{k_n}_j(z)$ satisfies $$\mu_{\omega}(D^{k_n}_j(z)) = \frac{1}{2^{k_n-K}}\cdot \mu_{\sigma^{k_n-K}\omega}(B)\le\frac{1}{2^{k_n-K}}.$$ This is a consequence of the formula \eqref{eq:maximal}.

So, denoting $r_n=r_n(z)=  \frac{1}{C}\frac{1}{|(f^{k_n-K}_\omega)'(z)|}$  we conclude, using Proposition~\ref{prop:global_ergodic} applied to the function $(\omega,z)\mapsto 
\ln|f'_\omega(z)|$,
that for $\mathbb P$-a.e. $\omega\in\Omega$, for $\mu_\omega$ a.e. $z\in J_\omega$:

$$\frac{-\ln\mu_\omega (\mathbb D(z,r_n))}{-\ln r_n}\ge\frac{(k_n-K)\ln 2}{\ln|(f^{k_n-K}_\omega)'(z)|-\ln C}\xrightarrow{n\to\infty}\frac{\ln 2}{\chi},$$
so,
$$\liminf_{n\to\infty}\frac{-\ln\mu_\omega (\mathbb D(z,r_n))}{-\ln r_n}\ge\frac{\ln 2}{\chi}$$
and, similarly,
denoting $R_n=R_n(z)=  {C}\frac{1}{|(f^{k_n-K}_\omega)'(z)|}$, we conclude that
$$\limsup_{n\to\infty}\frac{-\ln\mu_\omega (\mathbb D(z,R_n))}{-\ln R_n}\le\frac{\ln 2}{\chi}$$

Now, let $r>0$ be an arbitrary radius in $(0, r_1)$. Take the largest $n$ such that $r<r_n$. Then
$$r_{n+1}\le r<r_n$$ and
\begin{equation}\label{eq:compare_ratio}
\frac{-\ln\mu_\omega(\mathbb D(z,r)}{-\ln r}\ge\frac{-\ln\mu_\omega \mathbb D(z,r_n)}{-\ln r}
=\frac{-\ln\mu_\omega \mathbb D(z,r_n)}{-\ln r_n}\cdot\frac{-\ln r_n}{-\ln r}
\end{equation}
Next, notice that
\begin{equation}\label{eq:dense3}
 \frac{-\ln r_n}{-\ln r}\ge\frac{-\ln r_n}{-\ln r_{n+1}}=\frac{\ln|(f^{k_n-K}_\omega)'(z)|-\ln C}{\ln|(f^{k_{n+1}-K}_\omega)'(z)|-\ln C}=\frac{k_n}{k_{n+1}}\cdot 
\frac{\frac{1}{k_n} (\ln|(f^{k_{n}-K}_\omega)'(z)|-\ln C)}{\frac{1}{k_{n+1}} (\ln|(f^{k_{n+1}-K}_\omega)'(z)|-\ln C)}
\end{equation}
Since, by Proposition ~\ref{prop:complement}, $\frac{k_n}{k_{n+1}}\to 1$ as $n\to\infty$, combining \eqref{eq:compare_ratio} and \eqref{eq:dense3}, we conclude that

$$\liminf_{r\to 0}\frac{-\ln \mu_\omega(\mathbb D(z,r))}{-\ln r}\ge \frac{\log 2}{\chi}.$$
An analogous reasoning with $r_n$ replaced by $R_n$ gives that 
$$\limsup_{r\to 0}\frac{-\ln \mu_\omega(\mathbb D(z,r))}{-\ln r}\le \frac{\log 2}{\chi}.$$

We conclude that for $\mathbb P$ -a.e. $\omega\in\Omega$ and  
and for a.e. $z$ with respect to $\mu_\omega$ 
$\lim_{r\to 0}\frac{\ln\mu_\omega(\mathbb D(z,r))}{\ln r}$
exists, and 
and it is  equal to  $\frac{\ln 2}{\chi}=\frac{\ln 2}{\ln 2+\bf g(0)}$.

\

It is now standard (see, e.g.~ \cite{pubook}, Theorem 8.6.5) to conclude that for $\mathbb P$ -a.e. $\omega\in\Omega$ $$\dim \mu_\omega=\frac{\ln 2}{\ln 2+{\bf g}(0)}.$$
\end{proof}

\

\section{Dimension of the maximal measure-dependence on a parameter}\label{sec:dependence}

In Section~\ref{sec:formula} we calculated Hausdorff dimension of the maximal measure on the Julia set of a randomly iterated polynomial.
More precisely, in Theorem~\ref{thm:dim_harm}, for  a bounded random system of independent quadratic maps, assuming  additionally that $\mathcal S$ is typically fast escaping,
we proved that  for $\mathbb P$-a.e. $\omega\in\Omega$ 

$$\dim_H(\mu_\omega)=\frac{\ln 2}{\chi}= \frac{\ln 2}{\ln 2+{\bf g}(0)}<1$$
Recall that here, $\mathbb P$ is the product distribution generated by an initial  distribution $\nu$  
in the parameter space. For example (see \cite{LZ}, Proposition 15), $\nu$ can be taken as the uniform distribution 
on a disc $\mathbb D(0,R)$ ($R>\frac{1}{4}$)  in the parameter space. Clearly, the global Green function $\bf g$ then depends on
the initial distribution $\nu$, in particular, in the above example the global Green function  $\bf g$ depends on $R$.

So, actually, the formula for the dimension of $\mu_\omega$ should be written as:
\begin{equation}\label{eq:dim_harm2}
\dim_H(\mu_\omega^R)=\frac{\ln 2}{\ln 2+{\bf g}_R(0)}
\end{equation}
to underline the dependence on $R$.

Recall also (see \cite{BBR}) that for every $\omega\in\overline{\mathbb D}(0,1/4)^{\mathbb N}$ $g_\omega(0)=0$ and  the set $K_\omega$ is connected, thus- the basin of infinity $\mathcal A_\omega$ is simply-connected. 
Therefore, for every 
$\omega\in\overline{\mathbb D}(0,1/4)^{\mathbb N}$ 
$\dim_H \mu_\omega=1$, 
as a consequence of Makarov's theorem (see, e.g., Theorems 5.1 and 5.2 in \cite{marshall})).
The function $R\mapsto {\bf g}_R(0)$ is also constant, equal to $0$ in the interval $[0,1/4]$.

\

This motivates the following question.

\begin{question}\label{quest:regul}
Examine regularity of  the function
$$[1/4,\infty)\ni R\mapsto {\bf g}_R(0).$$
\end{question}

Note that  regularity of the above function automatically implies the same kind of regularity of the function
$$\R\mapsto \dim_H(\mu_\omega^R)=\frac{\ln 2}{\ln 2+{\bf g}_R(0)}$$

\


\begin{remark}Let us compare this question to   the deterministic case, $z^2+c$.  Denoting by $\mu_c$ the harmonic (equilibrium) measure on the Julia set $J(f_c)=\partial \mathcal A_c$,  we have:
$$\dim_H(\mu)=\frac{\ln 2}{\ln 2+g_c(0)}$$
The function $c\mapsto g_c(0)$ is harmonic, thus-real analytic  in the complement of the Mandelbrot set.
 So,  also the function $c\mapsto\dim_H(\mu_c)$ is real- analytic in the complement of the Mandelbrot set.





\end{remark}

\subsection{Dependence of the  dimension  of the random maximal measure on the radius $R$}

In this section we give an initial answer to Question~\ref{quest:regul}.

From now on let us denote $\Omega_R = \mathbb D(0,R)^{\mathbb{N}}$, where $R>\frac{1}{4}$.

Choose $R_1<R_2$ such that $0<R_1<R<R_2$.  Pick $R_0$ large enough as in Proposition~\ref{prop:log} for $\Omega_{R_2}$. 
Then clearly this $R_0$ is an appropriate choice for Proposition~\ref{prop:log} for all $\Omega_R$ where $R \in (R_1, R_2)$.

\begin{lemma}\label{lem:bound}
There exists $M,h > 0$ such that the following holds true: if  $|z-z'| < h$ then for all  $R \in(R_1, R_2)$ and all $\omega, \omega ' \in \Omega_R$ we have
$$|g_{\omega} (z) - g_{\omega '} (z') | < M .$$
\end{lemma}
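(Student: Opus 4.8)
The plan is to reduce everything to the uniform estimates of Proposition~\ref{prop:log}. The key observation is that these estimates hold uniformly in $\omega$, and, by the choice of $R_0$ described just above the lemma, with a single $R_0$ that is valid for every $R \in (R_1, R_2)$ simultaneously. Concretely, for every such $R$, every $\omega \in \Omega_R$ and every point we have $g_\omega \ge 0$; moreover $g_\omega(z) \le \log R_0 + 1$ whenever $z \in \overline{\mathbb{D}}_{R_0}$ (by \eqref{eq:est_green_above}, extended to the closure by continuity), and $|g_\omega(z) - \log|z|| < 1$ whenever $|z| > R_0$ (by \eqref{eq:est_green}). Since we only seek an absolute bound $M$ rather than smallness, I would estimate $g_\omega(z)$ and $g_{\omega'}(z')$ separately and combine them by the triangle inequality; the hypothesis $|z - z'| < h$ will be used only to guarantee that $z$ and $z'$ lie in comparable regions.

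Fix any $h > 0$, say $h := 1$, and split into cases according to the position of $z$ and $z'$ relative to the circle $\{|z| = R_0\}$. If both $z, z' \in \overline{\mathbb{D}}_{R_0}$, then $0 \le g_\omega(z), g_{\omega'}(z') \le \log R_0 + 1$, so their difference is at most $\log R_0 + 1$. If exactly one of them, say $z'$, lies in $\mathbb{D}_{R_0}^*$, then the bound $|z - z'| < h$ forces $R_0 < |z'| \le R_0 + h$, whence $g_{\omega'}(z') < \log(R_0 + h) + 1$, while $0 \le g_\omega(z) \le \log R_0 + 1$; again the difference is bounded by a constant depending only on $R_0$ and $h$.

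The only genuinely delicate case is when both points lie in the exterior $\mathbb{D}_{R_0}^*$, where $g$ grows like $\log|z|$ and is unbounded; this is precisely where the closeness hypothesis is essential, and it is the one place I would expect to require a little care. Here I would write $|g_\omega(z) - g_{\omega'}(z')| \le |\log|z| - \log|z'|| + 2$ using \eqref{eq:est_green}, and then control the logarithmic term through the Lipschitz continuity of $\log$ on $[R_0, \infty)$ (with constant $1/R_0$) together with the reverse triangle inequality $\big||z| - |z'|\big| \le |z - z'| < h$, giving $|\log|z| - \log|z'|| < h/R_0$. Taking $M$ larger than each of the finitely many explicit case bounds — for instance $M := \log(R_0 + h) + 3$ — completes the argument. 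Beyond this exterior estimate I do not anticipate any real obstacle: the entire content is the uniform boundedness furnished by Proposition~\ref{prop:log}, and the role of $h$ is merely to prevent the unbounded growth of $g$ at infinity from driving the two values apart.
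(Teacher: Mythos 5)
Your proof is correct and takes essentially the same route as the paper's: both arguments rest entirely on the two uniform estimates of Proposition~\ref{prop:log}, splitting into an interior region where $g_\omega \le \ln R_0 + O(1)$ gives an absolute bound, and an exterior region where $g_\omega(z) = \ln|z| + O(1)$ together with $|z-z'|<h$ controls the logarithmic difference. The only difference is cosmetic: the paper splits on whether $|z| \ge R_0+1$ or $z \in \mathbb{D}_{R_0+1}$ (two cases), while you track both points relative to $\overline{\mathbb{D}}_{R_0}$ (three cases) and bound the log term by Lipschitz continuity rather than the ratio estimate.
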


\begin{proof}
For $|z| \ge R_0+1$  we have $g_{\omega} (z) = \ln |z| + \varepsilon (\omega , z)$, where  $|\varepsilon (\omega, z)|\le 1$.
(see Proposition~\ref{prop:log}). Thus we have
$$
\aligned
|g_{\omega}(z) - g_{\omega'}(z')| &= |\ln(z) - \ln(z') + \varepsilon(\omega, z) - \varepsilon(\omega',z)| < 2 + |\ln(|z|) - \ln(|z'|)|\\
&=2+\left |\ln\left (1+\frac{|z'|-|z|}{|z|}\right )\right | < 3;
\endaligned
$$
the above holds for $h<1$ small enough.

Now, let $z \in \mathbb D_{R_0+1}$, and let $|z'-z|<1$. \\
Since  $z \in \mathbb D_{R_0+1}$ then $g_{\omega} (z)\le\ln|R_0+1|+1$, see again Proposition~\ref{prop:log}, and for $z'$ we have an analogous estimate with $R_0+1$ replaced by $R_0+2$.
Denoting $G:=\ln|R_0+2|+1$, we can estimate: 
$$|g_{\omega} (z) - g_{\omega '} (z')| < 2G,$$ and setting $M = \max (2G, 3)$ concludes the proof.
\end{proof}

For a sequence  $\omega=(c_i)_{i\in\mathbb N}\in \Omega_R$ we denote below  by $\omega(i)$ the parameter $c_i$. 
\begin{lemma}\label{lem:simseq}
For every $\varepsilon$, there exist $N\in\mathbb N$ and $\tilde h>0$ such that for all $R \in (R_1,R_2)$ and all 
$\omega, \omega' \in \Omega_R$ the following holds:

If $|\omega' (i) - \omega (i)| < \tilde h$ for all $i \leqslant N$, then $|g_{\omega} (0) - g_{\omega'} (0)| < \varepsilon$. 
\end{lemma}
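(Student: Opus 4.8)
The plan is to exploit the functional equation \eqref{eq:invariance} iterated $N$ times, which expresses $g_\omega(0)$ as the value of a Green function at the $N$-th image of the critical point, rescaled by $2^{-N}$. Indeed, since $f_{\sigma\omega}\circ f_\omega=f^2_\omega$ and so on, a straightforward induction on \eqref{eq:invariance} gives $g_\omega(0)=2^{-N}\,g_{\sigma^N\omega}\bigl(f^N_\omega(0)\bigr)$ for every $N$, and likewise for $\omega'$. Subtracting,
\[
|g_\omega(0)-g_{\omega'}(0)| = \frac{1}{2^N}\left| g_{\sigma^N\omega}\bigl(f^N_\omega(0)\bigr) - g_{\sigma^N\omega'}\bigl(f^N_{\omega'}(0)\bigr)\right|.
\]
The key observation is that although the shifted sequences $\sigma^N\omega$ and $\sigma^N\omega'$ may be entirely unrelated (they are only constrained to lie in $\Omega_R$), the difference on the right is controlled by the \emph{uniform} constant $M$ of Lemma~\ref{lem:bound}, and the prefactor $2^{-N}$ renders it as small as desired.

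Concretely, I would first fix $N$ so large that $M/2^N<\varepsilon$, where $M$ and $h$ are the constants furnished by Lemma~\ref{lem:bound} (we may take $h<1$). Next I would note that the map $\underline c=(c_0,\dots,c_{N-1})\mapsto f^N_{\underline c}(0)$ is a polynomial in the parameters, hence continuous, and therefore uniformly continuous on the compact polydisc $\overline{\mathbb D}(0,R_2)^N$, which contains the first $N$ coordinates of every $\omega\in\Omega_R$ with $R\in(R_1,R_2)$. Since $f^N_\omega(0)$ depends only on $\omega(0),\dots,\omega(N-1)$, uniform continuity yields $\tilde h>0$ such that the closeness hypothesis of Lemma~\ref{lem:simseq} (which in particular forces $|\omega(i)-\omega'(i)|<\tilde h$ for all $i<N$) implies $|f^N_\omega(0)-f^N_{\omega'}(0)|<h$. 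Crucially, this $\tilde h$ depends only on $N$ (hence on $\varepsilon$), on $R_2$ and on $h$, and not on $R$ nor on the tails of $\omega,\omega'$.

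Finally, writing $z_N:=f^N_\omega(0)$ and $z_N':=f^N_{\omega'}(0)$, we have $|z_N-z_N'|<h$ and $\sigma^N\omega,\sigma^N\omega'\in\Omega_R$, so Lemma~\ref{lem:bound} gives $|g_{\sigma^N\omega}(z_N)-g_{\sigma^N\omega'}(z_N')|<M$; substituting into the displayed identity yields $|g_\omega(0)-g_{\omega'}(0)|<M/2^N<\varepsilon$, as required. I do not anticipate a serious obstacle: the two substantive ingredients are the iterated scaling relation \eqref{eq:invariance} and the uniform-in-$R$ bound of Lemma~\ref{lem:bound}. The only real point of care is the \emph{decoupling} of the head of the sequence from its tail—the head being handled by uniform continuity of the finite composition $f^N_{\underline c}(0)$, and the tail by the crude uniform bound $M$, made negligible through the geometric factor $2^{-N}$.
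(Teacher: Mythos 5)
Your proposal is correct, and its skeleton coincides with the paper's: both proofs pick $N$ with $M/2^N<\varepsilon$ using the uniform constant $M$ from Lemma~\ref{lem:bound}, choose $\tilde h$ so that agreement of the first $N$ parameters to within $\tilde h$ forces $|f^N_\omega(0)-f^N_{\omega'}(0)|<h$ (continuity of the finite composition in the parameters), and then exploit the $2^{-N}$ rescaling to make the level-$N$ discrepancy negligible. The genuine difference is how the rescaling identity is obtained. The paper works from the limit formula $g_\omega(0)=\lim_n 2^{-n}\ln|f^n_\omega(0)|$, which is only valid when $0$ lies in the basin $\mathcal A_\omega$; this forces a three-case analysis (critical point escaping for both sequences, for exactly one, or for neither), with a separate constant $\tilde G$ in the mixed case. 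You instead iterate the functional equation \eqref{eq:invariance} to get $g_\omega(0)=2^{-N}g_{\sigma^N\omega}\bigl(f^N_\omega(0)\bigr)$ unconditionally, which collapses all three cases into one computation — a cleaner argument. The single point you should make explicit is that \eqref{eq:invariance} holds on all of $\mathbb C$, not merely on $\mathcal A_\omega$ where the defining limit \eqref{eq:green} lives: this follows from the zero extension of $g_\omega$ together with $f_\omega(\mathcal A_\omega)=\mathcal A_{\sigma\omega}$ (Proposition~\ref{prop:est_julia}), since for $z\notin\mathcal A_\omega$ one has $f_\omega(z)\notin\mathcal A_{\sigma\omega}$ and both sides vanish. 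With that one-line justification added, your proof is complete and, if anything, tidier than the paper's.
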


\begin{proof}
Set $\varepsilon$ and let $N$ be such that
$$
\max \left (\frac{G}{2^N}, \frac{M}{2^N}\right ) < \varepsilon,
$$
where $M$ is chosen (along with a constant $h$) as in the previous lemma and $G$ is the constant which bounds $g_{\omega}(z)$ in $\mathbb D_{R_0}$. Finally let $\tilde{h}$
be small enough so that  

$$|\omega' (i) - \omega (i)| < \tilde h\quad\text{for all}\quad i \leqslant N \Rightarrow |f^N_{\omega}(0) - f^N_{\omega'}(0)| < h$$

where $h$ is chosen with $M$ as in Lemma \ref{lem:bound}.

We consider now  three cases: when $0$ is escaping for both $\omega$ and $\omega'$, when it is escaping for one of these sequences, and when it is escaping for neither.

Assume first that $0$ is escaping for both $\omega$ and $\omega'$. Let $z = f^N_{\omega}(0)$ and $z' = f^N_{\omega'} (0)$. 
Since $0$ is escaping, the formula $$g_{\omega}(0) = \lim\limits_{n \rightarrow \infty} \frac{1}{2^n} \ln |f^n_{\omega}(0)|$$ holds.
We have 
\begin{equation*}
    \begin{split}
        |g_{\omega} (0) - g_{\omega'}(0)| &= \lim\limits_{n \rightarrow \infty} \frac{1}{2^{n}}| \ln (|f^n_{\omega}(0)|) - \ln (|f^n_{\omega'} (0)|)|\\ &= \lim\limits_{n \rightarrow \infty} \frac{1}{2^{N}}|\frac{1}{2^n} \ln (|f^{n+N}_{\omega}(0)|) - \frac{1}{2^n} \ln (|f^{n+N}_{\omega'} (0)|)|\\ &=  \lim\limits_{n \rightarrow \infty} \frac{1}{2^{N}}(\frac{1}{2^n} \ln (|f^n_{\omega}(z)|) - \frac{1}{2^n} \ln (|f^n_{\omega'} (z')|))\\ &= \frac{1}{2^N} (g_{\omega} (z) - g_{\omega'}(z'))< \frac{M}{2^N}.
    \end{split}
\end{equation*} \\
Assume now that $0$ is escaping for $\omega$ but not for $\omega'$. This means $g_{\omega'} (0) = 0$, whereas $g_{\omega} (0) = \lim\limits_{n \rightarrow \infty} \frac{1}{2^{n}}| \ln (|f^n_{\omega}(0)|)>0$. Moreover $R_0 > |f^N_{\omega'} (0)|$ which implies $R_0 + h > |f^N_{\omega} (0)|$. Thus we have: 
\begin{equation*}
    \begin{split}
        |g_{\omega}(0) - g_{\omega'}(0)| &= \lim\limits_{n \rightarrow \infty} \frac{1}{2^{n}}|
        \ln (|f^n_{\omega}(0)|)\\ &=  \lim\limits_{n \rightarrow \infty} \frac{1}{2^{n+N}}| \ln (|f^{n+N}_{\omega}(0)|)\\ 
        &= \frac{1}{2^N} \lim\limits_{n \rightarrow \infty} \frac{1}{2^{n}}| \ln (|f^n_{\sigma^N\omega}(f^N_{\omega}(0))|)\\ 
        &= \frac{1}{2^N} g_{\sigma^N\omega} (f^N_{\omega}(0))< \frac{\tilde G}{2^N}.
    \end{split}
\end{equation*}\\
where $\tilde G=\ln|R_0+h|+1$ is the bound  of  $g_{\omega}(z)$ in $\mathbb D_{R_0+h}$.

Finally if $0$ is not escaping for $\omega, \omega'$, then the result holds trivially as $g_{\omega}(0) = g_{\omega'} (0) = 0$. This concludes the proof.
 \end{proof}


For a sequence $\omega=(c_1, c_2,\dots)$ let us denote by $(1+h)\omega$ the sequence $(1+h)\omega:=((1+h)c_1, (1+h)c_2 \dots)$. Then Lemma \ref{lem:simseq} yields the following as a corollary.

\begin{lemma}\label{lem:cont_green}
For every $R>0, \varepsilon>0$ there exists $h>0$ such that for every $\omega \in \Omega_R$ we have $|g_{\omega}(0) - g_{(1+h)\omega}(0)| < \varepsilon$.
\end{lemma}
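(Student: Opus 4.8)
I want to prove Lemma~\ref{lem:cont_green}: for every $R>0$ and every $\varepsilon>0$ there is $h>0$ so that $|g_\omega(0)-g_{(1+h)\omega}(0)|<\varepsilon$ uniformly over $\omega\in\Omega_R$. The natural strategy is to obtain this as a direct corollary of Lemma~\ref{lem:simseq}, since the statement explicitly advertises this. The plan is to choose $h$ small enough that the rescaled sequence $(1+h)\omega$ differs from $\omega$ in each of its first $N$ coordinates by less than the threshold $\tilde h$ supplied by Lemma~\ref{lem:simseq}, and then invoke that lemma directly.

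\begin{proof}
Fix $R>0$ and $\varepsilon>0$. We must be slightly careful, because Lemma~\ref{lem:simseq} is stated for sequences lying in a common space $\Omega_{R'}$ with $R'\in(R_1,R_2)$, whereas the rescaled sequence $(1+h)\omega$ lies in $\Omega_{(1+h)R}$, not in $\Omega_R$. Thus I first choose an enclosing window: set $R_1=R/2$ and $R_2=2R$, and apply Lemma~\ref{lem:simseq} with these radii to obtain $N\in\mathbb N$ and $\tilde h>0$ such that, for all $R'\in(R_1,R_2)$ and all $\omega,\omega'\in\Omega_{R'}$, the implication holds: if $|\omega'(i)-\omega(i)|<\tilde h$ for all $i\le N$, then $|g_\omega(0)-g_{\omega'}(0)|<\varepsilon$.

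Now let $\omega=(c_1,c_2,\dots)\in\Omega_R$, so $|c_i|\le R$ for every $i$. For $0<h<1$ put $\omega'=(1+h)\omega$; then the $i$-th coordinate of $\omega'$ is $(1+h)c_i$, and
\begin{equation*}
|\omega'(i)-\omega(i)|=h\,|c_i|\le hR
\end{equation*}
for every $i$, in particular for all $i\le N$. Provided $h$ is small enough that $(1+h)R<R_2=2R$, which holds for any $h<1$, both $\omega$ and $\omega'$ lie in the common space $\Omega_{R'}$ with $R'=(1+h)R\in(R_1,R_2)$. Finally choose $h<\min\{1,\tilde h/R\}$, so that $|\omega'(i)-\omega(i)|\le hR<\tilde h$ for all $i\le N$. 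The hypothesis of Lemma~\ref{lem:simseq} is then satisfied, and we conclude $|g_\omega(0)-g_{(1+h)\omega}(0)|<\varepsilon$. Since this $h$ depends only on $R$ and $\varepsilon$ (through $N$ and $\tilde h$), the estimate is uniform in $\omega\in\Omega_R$, as required.
\end{proof}

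The only real obstacle is bookkeeping of the parameter ranges: one must verify that $\omega$ and the rescaled sequence $(1+h)\omega$ can be placed inside a single space $\Omega_{R'}$ for which the uniform constants $N,\tilde h$ of Lemma~\ref{lem:simseq} are valid. Once the enclosing window $(R_1,R_2)$ is fixed so that $(1+h)R$ stays inside it, the coordinate-wise estimate $|\omega'(i)-\omega(i)|=h|c_i|\le hR$ is immediate and the conclusion follows purely formally. No new analytic input beyond Lemma~\ref{lem:simseq} is needed.
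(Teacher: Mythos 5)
Your proof is correct and takes essentially the same route as the paper: the paper offers no separate argument, stating the lemma as an immediate corollary of Lemma~\ref{lem:simseq}, and your write-up supplies exactly that routine verification (placing $\omega$ and $(1+h)\omega$ in a common window $R'=(1+h)R\in(R_1,R_2)$ and choosing $h<\min\{1,\tilde h/R\}$ so that the coordinate-wise gap $h|c_i|\le hR$ falls below $\tilde h$). The parameter bookkeeping you flag is indeed the only point requiring care, and you handle it correctly.
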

\
We shall denote by $\mathbb P_R$ the product distribution of uniform 
distributions on $\mathbb D(0,R)$ in the product space $\mathbb D(0,R)^\mathbb N$.

\begin{theorem}\label{thm:continuous}
Let $\mathbb P_R$ the product distribution of uniform 
distributions on $\mathbb D(0,R)$ in the product space $\mathbb D(0,R)^
{\mathbb N}$. Let ${\bf g}_R$ be the global Green's fuction for the random 
system of quadratic maps generated by this distribution. 
Then  
the function $R\mapsto {\bf g}_R (0)$ is continuous in the interval  $[\frac{1}{4},\infty)$. Consequently, the function 

$$R\mapsto\dim_H\mu^R_\omega$$
is continuous in the interval $[\frac 1 4 ,+\infty)$.

\end{theorem}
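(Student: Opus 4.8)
The plan is to exploit the linear scaling structure of the family $\{\Omega_R\}$ together with Lemma~\ref{lem:cont_green}. The essential difficulty is that both the underlying space $\Omega_R=\mathbb D(0,R)^{\mathbb N}$ \emph{and} the measure $\mathbb P_R$ vary with $R$, so a direct comparison of the integrals ${\bf g}_R(0)=\int_{\Omega_R}g_\omega(0)\,d\mathbb P_R(\omega)$ for two different radii is not immediate. My key observation is that the dilation $\omega\mapsto(1+h)\omega$ furnishes a measure-preserving isomorphism between $(\Omega_R,\mathbb P_R)$ and $(\Omega_{(1+h)R},\mathbb P_{(1+h)R})$: if a coordinate $c_i$ is uniformly distributed on $\mathbb D(0,R)$, then $(1+h)c_i$ is uniformly distributed on $\mathbb D(0,(1+h)R)$, and since a product of push-forwards is the push-forward of the product, the image of $\mathbb P_R$ under this dilation is exactly $\mathbb P_{(1+h)R}$.

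First I would use this change of variables to rewrite, for $h>0$,
$${\bf g}_{(1+h)R}(0)=\int_{\Omega_{(1+h)R}}g_\eta(0)\,d\mathbb P_{(1+h)R}(\eta)=\int_{\Omega_R}g_{(1+h)\omega}(0)\,d\mathbb P_R(\omega).$$
Subtracting ${\bf g}_R(0)=\int_{\Omega_R}g_\omega(0)\,d\mathbb P_R(\omega)$ and estimating under the integral sign gives
$$\bigl|{\bf g}_{(1+h)R}(0)-{\bf g}_R(0)\bigr|\le\int_{\Omega_R}\bigl|g_{(1+h)\omega}(0)-g_\omega(0)\bigr|\,d\mathbb P_R(\omega).$$
By Lemma~\ref{lem:cont_green}, for any prescribed $\varepsilon>0$ the integrand is $<\varepsilon$ uniformly in $\omega\in\Omega_R$ once $h$ is small enough, whence the whole expression is $<\varepsilon$; the required uniformity of the threshold over a neighbourhood of $R$ is supplied by the uniform version of Lemma~\ref{lem:simseq} on an interval $(R_1,R_2)\ni R$. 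This proves right-continuity of $R\mapsto{\bf g}_R(0)$ at each $R\ge\frac14$; applying the same identity with $R$ in the role of $(1+h)R'$, i.e. writing $R'=R/(1+h)$ and letting $h\downarrow0$, yields left-continuity. At the endpoint $R=\frac14$ the argument still applies, now using that $g_\omega(0)\equiv0$ on $\Omega_{1/4}$ and hence ${\bf g}_{1/4}(0)=0$.

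Finally, the continuity of $R\mapsto\dim_H\mu^R_\omega$ follows at once from formula \eqref{eq:dim_harm2}, since $t\mapsto\frac{\ln 2}{\ln 2+t}$ is continuous (indeed real-analytic) on $[0,\infty)$ and ${\bf g}_R(0)\ge0$. I expect the main obstacle to be not the estimate itself—which is a one-line consequence of Lemma~\ref{lem:cont_green}—but the correct identification of the dilation as a measure isomorphism transporting $\mathbb P_R$ to $\mathbb P_{(1+h)R}$; this is precisely the device that lets one compare integrals over genuinely different probability spaces by pulling everything back to a single fixed space $\Omega_R$.
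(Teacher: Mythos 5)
Your proposal is correct and is essentially the paper's own argument: the paper likewise uses the dilation as a change of variables (pulling both integrals back to the fixed space $\mathbb D(0,1)^{\mathbb N}$ rather than to $\Omega_R$, a cosmetic difference) and then applies Lemma~\ref{lem:cont_green} uniformly under the integral sign. Your explicit treatment of left-continuity via $R'=R/(1+h)$ and of the endpoint $R=\tfrac14$ merely spells out details the paper leaves implicit.
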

\begin{proof}
Let $R\in [1/4,\infty)$,
choose some $R_1<R_2$ 
such that $1/4<R_1<R<R_2$.
We have
$$
{\bf g}_R(0) = \int_{\mathbb D(0,R)^{\mathbb{N}}} g_{\omega} (0) d \mathbb P_R (\omega) = \int_{\mathbb D (0,1)^{\mathbb{N}}} g_{R \omega} (0) d \mathbb P (\omega).
$$
where for $\omega=(c_0,c_1,\dots)\in\mathbb D(0,1)^{\mathbb N}$  we denoted by $R\omega$ the sequence $R\omega=(Rc_0,Rc_1,\dots)\in \mathbb D(0,R)^{\mathbb N}$.
Now from  Lemma~\ref{lem:cont_green} we conclude that for any $\varepsilon>0$ there exists $h_\varepsilon>0$ such that for all $h<h_\varepsilon$
we obtain
$$
|{\bf g}_R (0) - {\bf g}_{R +h}(0)| \leqslant \int_{\mathbb D (0,1)^{\mathbb{N}}}| g_{R \omega}(0) - g_{(R + h)\omega}(0) | d \mathbb P (\omega) < \varepsilon.
$$
\end{proof}

The next result gives the information about the asymptotics of the generalized Green function at infinity.
\begin{theorem}\label{thm:asymp}
Let $\mathbb P_R$ the product distribution of uniform 
distributions on $\mathbb D(0,R)$ in the product space $\mathbb D(0,R)^
{\mathbb N}$. Let ${\bf} g$ be the global Green's fuction for the random 
system of quadratic maps generated by this distribution. 

The generalized Green function ${\bf g}_R (0)$ satisfies
$\lim\limits_{R \rightarrow \infty} \frac{{\bf g}_{R}(0)}{\ln (R)} = \frac{1}{2}$.
Consequently,
$$\lim_{R\to\infty}{\dim_H(\mu^R_\omega)}\cdot {\ln R}=2 \ln(2).$$
\end{theorem}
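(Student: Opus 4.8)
The plan is to dispose first of the ``Consequently'' clause, which is pure algebra, and then to concentrate on the genuine content, the asymptotic $\tfrac{{\bf g}_R(0)}{\ln R}\to\tfrac12$. For the former I would insert the dimension formula \eqref{eq:dim_harm2} furnished by Theorem~\ref{thm:dim_harm} and write
$$\dim_H(\mu_\omega^R)\cdot\ln R=\frac{\ln 2\cdot\ln R}{\ln 2+{\bf g}_R(0)}=\frac{\ln 2}{\frac{\ln 2}{\ln R}+\frac{{\bf g}_R(0)}{\ln R}}.$$
Letting $R\to\infty$, the first term in the denominator vanishes and the second tends to $\tfrac12$ by the Green-function statement, so the right-hand side converges to $\frac{\ln 2}{1/2}=2\ln 2$.

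The core is thus the behaviour of ${\bf g}_R(0)=\int_{\Omega_R}g_\omega(0)\,d\mathbb P_R(\omega)$. The key identity is the invariance relation \eqref{eq:invariance} evaluated at $z=0$: since $f_\omega(0)=c_0$, it reads
$$g_\omega(0)=\tfrac12\,g_{\sigma\omega}(c_0).$$
I would then track how the constant $R_0$ of Proposition~\ref{prop:log} depends on $R$. The inclusion $f_c(\mathbb D_{R_0}^*)\subset\mathbb D_{2R_0}^*$ for all $|c|<R$ follows from $|f_c(z)|\ge|z|^2-|c|\ge R_0^2-R$ on $|z|\ge R_0$, so it suffices that $R_0^2-R\ge 2R_0$; hence one may take $R_0=R_0(R)$ of order $\sqrt R$, and in particular $R_0/R\to 0$. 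This slow growth of $R_0$ relative to $R$ is the decisive quantitative feature.

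Next I would split $\Omega_R$ according to the size of $|c_0|$. On the \emph{good} set $\{|c_0|\ge R_0\}$, Proposition~\ref{prop:log} applies to $g_{\sigma\omega}$ at the point $c_0\in\mathbb D_{R_0}^*$ (the same $R_0$ serving all of $\mathbb D(0,R)^{\mathbb N}$, hence $\sigma\omega$), giving $|g_{\sigma\omega}(c_0)-\ln|c_0||<1$, and therefore $|g_\omega(0)-\tfrac12\ln|c_0||<\tfrac12$. On the \emph{bad} set $\{|c_0|<R_0\}$ I would use the uniform bound \eqref{eq:est_green_above}, $g_\omega(0)\le\ln R_0+1$, together with $\mathbb P_R(|c_0|<R_0)=R_0^2/R^2$; since $R_0\sim\sqrt R$ this probability is $O(1/R)$ while the Green value is $O(\ln R)$, so the bad set contributes only $O((\ln R)/R)$ to the integral. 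The same estimate controls $\int_{|c_0|<R_0}\ln|c_0|\,d\mathbb P_R$, so up to an $O(1)$ error and an $O((\ln R)/R)$ term one obtains ${\bf g}_R(0)=\tfrac12\int_{\Omega_R}\ln|c_0|\,d\mathbb P_R(\omega)$.

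Finally I would evaluate this remaining integral explicitly: for $c_0$ uniform on $\mathbb D(0,R)$, polar coordinates give
$$\int_{\mathbb D(0,R)}\ln|c_0|\,d\mathbb P_R=\frac{2}{R^2}\int_0^R r\ln r\,dr=\ln R-\tfrac12.$$
Combining the pieces yields ${\bf g}_R(0)=\tfrac12\ln R+O(1)$, whence $\frac{{\bf g}_R(0)}{\ln R}\to\tfrac12$. I expect the main obstacle to be quantitative bookkeeping rather than anything conceptual: the crux is to confirm that $R_0$ can be taken to grow like $\sqrt R$ and then to check that the bad set, where the clean distortion estimate fails, is genuinely negligible once its small measure $O(1/R)$ is weighed against the slowly growing bound $O(\ln R)$ for $g_\omega(0)$ and the whole quantity is divided by $\ln R$. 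The good set, by contrast, is handled in one line by Proposition~\ref{prop:log}.
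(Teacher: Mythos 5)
Your argument is correct, but it takes a genuinely different route from the paper's. The paper proves the asymptotics by a two-sided comparison with autonomous-type orbits: for the upper bound it uses the pointwise domination $g_\omega(0)\le g_{\omega_R}(0)$, where $\omega_R=(R,R,\dots)$, together with $h_R^n(0)<(MR)^{2^{n-1}}$ for $h_R(z)=z^2+R$; for the lower bound it restricts to the event $E_{R,\delta}=\{|\omega(0)|>\delta R\}$, of probability $1-\delta^2$, shows by induction against $h_R(z)=z^2-R$ that $g_\omega(0)\ge\frac12\ln(\delta R)$ on that event for large $R$, and removes the factor $1-\delta^2$ only by letting $\delta\to0$ at the end. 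You instead combine the functional equation $g_\omega(0)=\frac12 g_{\sigma\omega}(c_0)$ with the quantitative refinement that Proposition~\ref{prop:log} holds with $R_0(R)=O(\sqrt R)$, and reduce everything to the exact computation $\int_{\mathbb D(0,R)}\ln|c_0|\,d\mathbb P_R=\ln R-\frac12$. Your route buys more than the theorem asks: it yields ${\bf g}_R(0)=\frac12\ln R+O(1)$, a bounded-error asymptotic that the paper's proof does not give (its lower bound is off by the factor $1-\delta^2$ before the limit in $\delta$), and it explains conceptually where the constant $\frac12$ comes from, namely ${\bf g}_R(0)$ is, up to $O(1)$, half the expected value of $\ln|c_0|$; it also makes clear that only the first coordinate of $\omega$ matters to leading order, so the argument adapts to other first-coordinate distributions. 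What the paper's route buys in exchange is that it never needs to reopen Proposition~\ref{prop:log}: it uses that proposition purely qualitatively. On this point your sketch has one step to complete: when you claim $R_0\sim\sqrt R$ you verify only the inclusion \eqref{eq:1}, but your good-set estimate invokes \eqref{eq:est_green} at the point $c_0$, so the bound $|g_{\omega'}(z)-\ln|z||<1$ for $|z|\ge R_0$ must also be rechecked at that scale. It does hold: taking, say, $R_0=2\sqrt R$, the inclusion gives $|f^n_{\omega'}(z)|\ge 2^nR_0$ for $|z|\ge R_0$, hence $|c_n|/|f^n_{\omega'}(z)|^2\le R/(4^nR_0^2)\le 4^{-(n+1)}$, and then
$$\bigl|g_{\omega'}(z)-\ln|z|\bigr|\le\sum_{n\ge0}\frac{1}{2^{n+1}}\,\Bigl|\ln\Bigl|1+\tfrac{c_n}{(f^n_{\omega'}(z))^2}\Bigr|\Bigr|\le\sum_{n\ge0}\frac{1}{2^{n+1}}\cdot\frac13\cdot 4^{-n}=\frac{4}{21}<1,$$
so the gap is easily filled and the rest of your bookkeeping (bad set of probability $R_0^2/R^2=O(1/R)$ against Green values $O(\ln R)$) goes through as you describe.
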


\begin{proof}
First, note that for any $\omega \in \Omega_R$ we have $g_{\omega}(0) \leqslant g_{\omega_R} (0)$ where $\omega_R$ is the 
constant sequence defined by $\omega_R (i) = R$. Therefore, we have 
$$
{\bf g}_{R}(0) = \int_{\Omega_R} g_{\omega}(0) d\mathbb P_R (\omega)  \leqslant \int_{\Omega_R}  g_{\omega_R} (0) d \mathbb P_R(\omega) = g_{\omega_R} (0).
$$
We will now show a similar bound for ${\bf g}_R (0)$ from below.
Let us denote $$E_{R, \delta} = \{\omega \in \Omega_R : |\omega(0)| > \delta R \},$$
where $\delta \in (0,1)$. 
Then clearly the measure $\mathbb P_R(E_{R, \delta})$ is simply $1 - \delta^2$. Take any sequence $\omega' \in E_{R,\delta}$ for which $0$ is escaping (so almost every $\omega'$ will do). Let us denote $h_R (z) = z^2 - R$. Let us fix $\delta$ for now, and note that for $R$ large enough (chosen for this set $\delta$) we have
\begin{equation*}
    \begin{split}
        g_{\omega'}(0) & = \lim\limits_{n \rightarrow \infty} \frac{1}{2^n} \ln (|f^n_{\omega'}(0)|) \\
        & \geqslant \limsup\limits_{n \rightarrow \infty} \frac{1}{2^n} \ln (|h_R^{n-1} (\delta R)|) \\
        & \geqslant \lim\limits_{n \rightarrow \infty} \frac{1}{2^n} \ln ((\frac{\delta}{2} R)^{2^{n-1}}) \\
        & = \frac{1}{2} \ln (\delta R).
    \end{split}
\end{equation*}

The first equality is just a formula for Green's function which holds when $0$ is escaping, the first inequality follows from a simple 
induction argument. Indeed, by assumption we have
$$
|f_{\omega'}(0)| \geqslant \delta R
$$
and it is easy to see that we have
$$
|f^{n}_{\omega'}(0)| = |f^{n-1}_{\omega'}(0)^2 + c_n| \geqslant |f^{n-1}_{\omega'}(0)|^2 - R = |h_R (f^{n-1}_{\omega'}(0))|.
$$
Finally the last inequality is a simple induction argument. Indeed, we want to prove
$$
h_R^n(\delta R) > \left( \frac{\delta R}{2} \right)^{2^n}
$$
but assuming the above inequality for $n-1$ we get
$$
h_R^{n}(\delta R) = h_R (h_R^{n-1}(\delta R)) > \left(\frac{h_R^{n-1}(\delta R)}{2} \right)^2 > \left(\frac{\delta R}{2}\right)^{2^n}
$$
since for a large enough $z$ we have $z^2 - R > \left(\frac{z}{2}\right)^2$.
Thus we have
$$
g_{R}(0) = \int_{\Omega_R} g_{\omega}(0) d\mathbb P_R (\omega) \geqslant \int_{E_{R,\delta}} g_{\omega}(0) d\mathbb P_R (\omega) \geqslant \int_{E_{R,\delta}} \frac{1}{2} \ln (\delta R)
d\mathbb P_R (\omega) \geqslant \frac{1}{2}  (1 - \delta^2) \ln (\delta R).
$$

So we end up with
$$
\frac{1}{2}(1-\delta^2) \ln (\delta R) \leqslant g_R (0) \leqslant g_{\omega_R} (0).
$$
The left hand side yields 
$$
\liminf\limits_{R \rightarrow \infty} \frac{g_{R}(0)}{\ln (R)} \geqslant \lim\limits_{R \rightarrow \infty} \frac{(1- \delta^2)\ln(\delta R)}{2 \ln (R)} = \frac{1}{2}(1-\delta^2)
$$
and since $\delta$ can be chosen arbitrarily close to $0$ this concludes the proof in one direction. \\
Now for the other inequality. Note that there exists a constant $M$ such that for all $n$ the polynomial $h_R (z) = z^2 + R$ satisfies
$$
h_R^n(0) = h_R^{n-1} (R) < (M R)^{2^{n-1}}.
$$
Thus we have
$$
\limsup\limits_{R \rightarrow \infty} \frac{g_{R}(0)}{\ln (R)} \leqslant \limsup\limits_{R \rightarrow \infty} \frac{g_{\omega_R} (0)}{\ln (R)} \leqslant \lim\limits_{R \rightarrow \infty} \frac{\frac{1}{2}\ln (M R)}{\ln (R)} = \frac{1}{2}
$$
This concludes the proof.
\end{proof}

\begin{remark}
Let us note that for the autonomous iteration we have $g_c(0)=\frac{1}{2}\ln |c|+o(1)$ as $c\to\infty$, thus we have the same (actually: more precise) kind of asymptotics:
$$\lim_{c\to\infty} \dim_H(\mu_c)\cdot \ln|c|=2\ln 2.$$
\end{remark}

\subsection{Randomly perturbed parameter $c_0\notin\mathcal M$.}
In this subsection,
we consider a related problem. Namely, we study random iterates of the maps $z^2+c$ 
where $c$ is chosen independently at each step from the disc $\mathbb D(c_0,\delta_0)$ where $c_0\notin\mathcal M$ 
and $\delta_0$ 
is small enough, in particular  ${\rm dist}(c_0,\mathcal M)>\delta_0$. 

Again, the  dimension of the equilibrium  measure is constant almost everywhere. It is  given by the 
formula analogous to  \eqref{eq:dim_max} (see  Proposition~\ref{prop:dim_harm}).

Next, we study the dependence of this value on the ''range of randomness'' $\delta_0$.
This dependence turns out to be very regular.

\


More formally,   introduce the product  space:

$$\Omega=\mathbb D(0,1)^\mathbb N.$$
So, the space  $\Omega$ consists of infinite sequences of points in $\mathbb D(0,1)$, denoted by $\omega=(v_0,v_1,\dots)$.

Let $\nu$ be a probability distribution on $\mathbb D(0,1)$, and let 
$\mathbb P$ be the (completed) product  distribution on $\Omega$.
We now introduce the dependence on $\lambda$. 
For each $\lambda\in\Lambda:=\mathbb D(0,\delta_0)$  denote 
$$\Omega_\lambda=\lambda\cdot \Omega,$$
i.e, a point $\omega_\lambda\in\Omega_\lambda$,  is represented as $\lambda\cdot \omega=(\lambda v_0,\lambda v_1,\dots)$, where $\omega\in\Omega$.

For each $\omega=(v_0,v_1,\dots)$ and $\lambda\in\Lambda$ we write 
$c_n=c_n(\lambda,\omega)=c_0+\lambda v_n$, and write
 $f_{\lambda,\omega}$ to denote the map
\begin{equation}\label{eq:lambda1}
 f_{\lambda,\omega}= f_{c_0+\lambda v_0}.
\end{equation}
Analogously,
\begin{equation}\label{eq:lambda2}
f^n_{\lambda,\omega}=
f_{c_0+\lambda v_{n-1}}\circ\dots f_{c_0+\lambda v_2}\circ f_{c_0+\lambda v_1}\circ f_{c_0+\lambda v_0}=f_{c_{n-1}}\circ \dots \circ f_{c_0}.
\end{equation}

\

Put $R=|c_0|+{\rm dist}(c_0,\mathcal M)$. Then all parameters $c_0+\lambda v_j$ are in the disc $\mathbb D(0,R)$. 
Let $R_0$ be the value assigned to $R$ as in Proposition ~\ref{prop:log}.

\

We denote by  $J_{\lambda,\omega}$ the Julia set
for the non- autonomous sequence $f^n_{\lambda,\omega}$.

\

Let us note first the following stability properties (Proposition~\ref{prop:stability} and Lemma~\ref{lem:distance2}).
\begin{prop}\label{prop:stability}
 For every $\varepsilon>0$ there exists $\delta_0>0$ such that each Julia set
 $J_{\lambda,\omega}$ with $\lambda\in\mathbb D(0,\delta_0)$ is $\varepsilon$ -close to the Julia set $J(f_{c_0})$,
 i.e., 
 $$J_{\lambda,\omega}\subset B(J(f_{c_0}),\varepsilon).$$
\end{prop}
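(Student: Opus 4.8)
The plan is to trap the non-escaping set $K_{\lambda,\omega}$ inside the $\varepsilon$-neighbourhood of $J(f_{c_0})$, from which the claim follows since $J_{\lambda,\omega}=\partial\mathcal A_{\lambda,\omega}\subset K_{\lambda,\omega}$. The whole argument rests on the hypothesis $c_0\notin\mathcal M$: the critical orbit of $f_{c_0}$ escapes, so $K(f_{c_0})=J(f_{c_0})$ is a Cantor set and its complement $\mathbb C\setminus J(f_{c_0})$ is exactly the basin of infinity $\mathcal A(f_{c_0})$. Thus every point lying outside $J(f_{c_0})$ is eventually carried past the escape radius $R_0$ assigned by Proposition~\ref{prop:log} to the fixed bound $R=|c_0|+\mathrm{dist}(c_0,\mathcal M)$; crucially, $R_0$ does not depend on $\lambda$. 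The strategy is to show this escaping behaviour persists, uniformly in $\omega$, under small random perturbation, so that only points genuinely near $J(f_{c_0})$ can fail to escape.

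First I would fix $\varepsilon>0$ and pass to the compact set $E:=\overline{\mathbb D}_{R_0}\setminus B(J(f_{c_0}),\varepsilon)$. Every $z\in E$ escapes under $f_{c_0}$, so by the inclusion $f_{c_0}(\mathbb D_{R_0}^*)\subset\mathbb D_{2R_0}^*$ (Proposition~\ref{prop:log}) there is some $n_z$ with $|f_{c_0}^{n_z}(z)|>R_0$, and once this happens all later iterates stay in $\mathbb D_{R_0}^*$. Since $|f_{c_0}^{n_z}(z)|>R_0$ is an open condition in $z$, a standard compactness argument over $E$ produces a single integer $N$ with $|f_{c_0}^{N}(z)|>R_0$ for every $z\in E$; after slightly enlarging $N$ I may assume a definite margin, say $|f_{c_0}^{N}(z)|>R_0+1$ on $E$.

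The second step is a uniform $C^0$-perturbation estimate. For any $\omega=(v_0,v_1,\dots)\in\Omega$ one has $|(c_0+\lambda v_j)-c_0|=|\lambda|\,|v_j|\le|\lambda|$, so each factor $f_{c_0+\lambda v_j}$ is within $|\lambda|$ of $f_{c_0}$ uniformly on $\overline{\mathbb D}_{R_0}$, and all the relevant iterates remain in a fixed bounded region depending only on $N,R,R_0$. A finite composition of $N$ maps each uniformly close to $f_{c_0}$ is uniformly close to $f_{c_0}^{N}$ on $\overline{\mathbb D}_{R_0}$; hence there is $\delta_0>0$, depending only on $N,\varepsilon,R_0$ and \emph{not} on $\omega$, so that $|\lambda|<\delta_0$ forces $|f^{N}_{\lambda,\omega}(z)|>R_0$ for all $z\in E$ and all $\omega$. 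Shrinking $\delta_0$ so that also $\delta_0\le\mathrm{dist}(c_0,\mathcal M)$ keeps every parameter in $\mathbb D(0,R)$, so Proposition~\ref{prop:log} applies to $f_{\lambda,\omega}$. Then for such $\lambda$ the point $f^{N}_{\lambda,\omega}(z)$ lies in $\mathbb D_{R_0}^*$, and by \eqref{eq:1} every subsequent non-autonomous iterate stays in $\mathbb D_{R_0}^*$ and tends to infinity, so $z\in\mathcal A_{\lambda,\omega}$. Combined with $\mathbb D_{R_0}^*\subset\mathcal A_{\lambda,\omega}$ from \eqref{eq:2}, this gives $K_{\lambda,\omega}=\mathbb C\setminus\mathcal A_{\lambda,\omega}\subset B(J(f_{c_0}),\varepsilon)$, and hence $J_{\lambda,\omega}\subset B(J(f_{c_0}),\varepsilon)$. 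I expect the only delicate point to be the \emph{uniformity of the escape over all $\omega$ simultaneously}; this is precisely what the bound $|\lambda v_j|\le|\lambda|$ buys, reducing the entire random family to an honest $C^0$-perturbation of the single escaping map $f_{c_0}$, after which the global escape of $f_{c_0}$ off its Cantor Julia set does the rest.
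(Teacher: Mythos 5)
Your proof is correct and follows essentially the same route as the paper's: both arguments fix the compact set $\overline{\mathbb D}_{R_0}\setminus B(J(f_{c_0}),\varepsilon)$, use the escape of $f_{c_0}$ off its Julia set to find one iterate $N$ pushing the whole set past $R_0$ with a margin, and then invoke a uniform (in $\omega$) perturbation estimate for small $|\lambda|$ together with Proposition~\ref{prop:log} to conclude that all such points lie in $\mathcal A_{\lambda,\omega}$. The only difference is expository: you spell out the compactness argument and the final passage from $K_{\lambda,\omega}\subset B(J(f_{c_0}),\varepsilon)$ to the claim, which the paper leaves implicit.
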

\begin{proof} 
Choose an arbitrary $\varepsilon>0$. 
 Note that on the set $\mathbb D(0,R_0)\setminus  B(J(f_{c_0}),\varepsilon)$ the iterates $f^n_{c_0}$ converge to $\infty$ uniformly.
 Therefore, one can choose a common value $n_0$ such that for every $z\in  \mathbb D(0,R_0)\setminus  B(J(f_{c_0}),\varepsilon)$ we have that 
 $|f^{n_0}_{c_0}(z)|>2R_0$.  Consequently, for $\delta_0$ small enough, every $z\in \mathbb D(0,R_0)\setminus  B(J(f_{c_0}),\varepsilon)$, 
 every $\lambda\in \mathbb D(0,\delta_0)$ and every $\omega\in\Omega$ the estimate
 $|f^{n_0}_{\lambda,\omega}(z)|>R_0$ holds. It now follows from the choice of $R_0$ that  $z\in \mathcal A_{\lambda,\omega}$.
 \end{proof}

\begin{lemma}\label{lem:distance2} Let $c_0\notin\mathcal M$, and let $f_{\lambda, \omega}$ be defined as in \eqref{eq:lambda1} and \eqref{eq:lambda2}.
For every $\delta_0$ small enough there exists $\eta>0$  such that for all $\omega\in\Omega$, $\lambda\in\Lambda$, 
 $J_{\lambda,\omega}\cap\mathbb D(0,\eta)=\emptyset$. In particular, the critical point $0$ is in the basin of infinity, $\mathcal A_\omega$, for every $\omega\in\Omega$.
\end{lemma}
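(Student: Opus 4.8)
The plan is to reduce the entire statement to Proposition~\ref{prop:stability}, the only genuinely new input being that the hypothesis $c_0\notin\mathcal M$ forces a definite gap between the critical point and the unperturbed Julia set. First I would record that, since $c_0\notin\mathcal M$, the orbit of the critical point $0$ under $f_{c_0}$ escapes to infinity, so that $0\in\mathcal A_{c_0}$ and in particular $0\notin J(f_{c_0})$. Because $J(f_{c_0})$ is a nonempty compact set not containing $0$, the quantity
$$d_0:={\rm dist}\bigl(0,J(f_{c_0})\bigr)$$
is strictly positive. This is the single place where the assumption $c_0\notin\mathcal M$ enters.

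Next I would invoke Proposition~\ref{prop:stability} with $\varepsilon=d_0/2$. This produces a threshold $\delta_0>0$ so that, uniformly in $\lambda\in\mathbb D(0,\delta_0)$ and $\omega\in\Omega$, one has $J_{\lambda,\omega}\subset B\bigl(J(f_{c_0}),d_0/2\bigr)$. I then set $\eta=d_0/2$ and finish the geometric claim by a triangle-inequality estimate: if $z\in B(J(f_{c_0}),d_0/2)$, choose $w\in J(f_{c_0})$ with $|z-w|<d_0/2$; then $|z|\ge |w|-|z-w|> d_0-d_0/2=\eta$, so $z\notin\mathbb D(0,\eta)$. Hence $J_{\lambda,\omega}\cap\mathbb D(0,\eta)=\emptyset$ for all admissible $\lambda$ and $\omega$, which is the first assertion.

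For the closing claim that $0\in\mathcal A_{\lambda,\omega}$, I would reuse the escape mechanism already contained in the proof of Proposition~\ref{prop:stability}. There one shows that every point $z\in\mathbb D(0,R_0)\setminus B(J(f_{c_0}),\varepsilon)$ satisfies $|f^{n_0}_{\lambda,\omega}(z)|>R_0$ for a common iterate $n_0$, and therefore belongs to $\mathcal A_{\lambda,\omega}$ by the choice of $R_0$ in Proposition~\ref{prop:log}. Since $0\in\mathbb D(0,R_0)$ and $0\notin B(J(f_{c_0}),d_0/2)$, applying this to $z=0$ gives $0\in\mathcal A_{\lambda,\omega}$ for every $\lambda\in\Lambda$ and every $\omega\in\Omega$.

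As for difficulty, there is no serious obstacle here: the lemma is essentially a corollary of Proposition~\ref{prop:stability}. The only two points requiring a moment of care are that the enclosure $J_{\lambda,\omega}\subset B(J(f_{c_0}),d_0/2)$ must hold \emph{uniformly} in both $\lambda$ and $\omega$ — which is exactly what the common escape time $n_0$ in Proposition~\ref{prop:stability} delivers — and that $0$ must land in the unbounded complementary component rather than in a bounded trapped region; the latter is automatic, since the escape estimate places $0$ directly in $\mathcal A_{\lambda,\omega}$.
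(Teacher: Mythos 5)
Your proof is correct, but it is packaged differently from the paper's. The paper argues directly on the critical orbit: since $c_0\notin\mathcal M$, the orbit $f^n_{c_0}(0)$ escapes, so there is $n_0$ with $|f^{n_0}_{c_0}(0)|>2R_0$; for $\delta_0$ small enough and, by continuity, for all $z$ in a small disc $\mathbb D(0,\eta)$, all $\lambda\in\Lambda$ and all $\omega\in\Omega$, one gets $|f^{n_0}_{\lambda,\omega}(z)|>R_0$, whence $\mathbb D(0,\eta)\subset\mathcal A_{\lambda,\omega}$ by the choice of $R_0$ in Proposition~\ref{prop:log} --- and this single estimate yields both assertions at once. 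You instead obtain the geometric claim from the statement of Proposition~\ref{prop:stability} (applied with $\varepsilon=d_0/2$, where $d_0={\rm dist}(0,J(f_{c_0}))>0$) plus a triangle inequality, and then, for the claim $0\in\mathcal A_{\lambda,\omega}$, you re-open the proof of Proposition~\ref{prop:stability} and apply its uniform escape estimate at $z=0$. That last step is essential and you handled it correctly: the first assertion alone does not imply the second, since a priori $0$ could lie in a bounded complementary component of $J_{\lambda,\omega}$ (the interior of $K_{\lambda,\omega}$) rather than in the basin. Both routes rest on the same mechanism (a common escape time plus perturbation stability coming from Proposition~\ref{prop:log}); yours buys brevity for the enclosure by citing the stability proposition as a black box, at the cost of still having to invoke the interior of its proof for the basin claim. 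In fact that second step alone already gives everything: the uniform escape on $\mathbb D(0,R_0)\setminus B\bigl(J(f_{c_0}),d_0/2\bigr)$, a set containing $\mathbb D(0,d_0/2)$, places the whole disc $\mathbb D(0,d_0/2)$ inside the open set $\mathcal A_{\lambda,\omega}$, which is disjoint from its boundary $J_{\lambda,\omega}$, so your first paragraph and the triangle-inequality argument could be dropped entirely.
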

\begin{proof}
Since $f^n_{c_0}(0)\to\infty$ as $n\to\infty$, 
there exists $n_0>0$ such that $|f^{n_0}_{c_0}(0)|>2 R_0$. 
Thus, for $\delta_0$ small enough, and every $\omega$,
$|f^{n_0}_{\lambda,\omega}(0)|>R_0$, and, by continuity, the same holds for every $z\in \mathbb D(0,\eta)$ with $\eta>0$ sufficiently small.
The conclusion now follows from the choice of $R_0$.
\end{proof}

Analogously to the notation introduced in  Section~\ref{sec:green}, we denote by $\mu_{\lambda,\omega}$ the equilibrium (harmonic)
measure on the Julia set $J_{\lambda,\omega}$ 
for the non- autonomous sequence $f^n_{\lambda,\omega}$.
As in Section~\ref{sec:formula} the following Proposition ~\ref{prop:dim_harm} holds true. 
Its  proof is essentially  easier than in the previous, non-hyperbolic case considered in Theorem ~\ref{thm:dim_harm}, because, as noticed in \cite{LZ} Remark in Section 6, the sequence  $k_n$ can be taken to be the  sequence of all (sufficiently large) integers.

\begin{prop}\label{prop:dim_harm}
Let $c_0\notin\mathcal M$. There exists $\delta_0>0$ (depending on the choice of $c_0$)  such that the following holds.
Choose an arbitrary $\lambda\in\Lambda=\mathbb D(0,\delta_0)$.
For $\mathbb P$-a.e. $\omega\in\Omega$ 

\begin{equation}\label{eq:dim_max2}
\dim_H(\mu_{\lambda,\omega})=\frac{\ln 2}{\chi}= \frac{\ln 2}{\ln 2+{{\bf g}_\lambda}(0)}<1
\end{equation}
where ${\bf g}_\lambda=\int_\Omega g_{\lambda,\omega}d\mathbb P(\omega)$.
\end{prop}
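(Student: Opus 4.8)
The plan is to follow the proof of Theorem~\ref{thm:dim_harm} almost verbatim, the one genuine change being that the uniform escape of the critical point makes the delicate ergodic-return construction behind Proposition~\ref{prop:complement} unnecessary. First I would shrink $\delta_0$ so that Lemma~\ref{lem:distance2} and Proposition~\ref{prop:stability} hold, and fix a \emph{uniform} escape time: since $f^n_{c_0}(0)\to\infty$, there is $K\in\mathbb N$ with $|f^{K}_{\lambda,\omega'}(0)|>R_0$ for every $\omega'\in\Omega$ and every $\lambda\in\Lambda$ (this is exactly the $n_0$ produced in the proof of Lemma~\ref{lem:distance2}), and by \eqref{eq:1} the critical orbit then stays outside $D=\mathbb D_{R_0}$ for all later times. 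I claim this gives a uniform form of Proposition~\ref{prop:complement}: for \emph{every} large $k$ and every component $S_k(z)$ of $(f^k_{\lambda,\omega})^{-1}(D)$, the map $f^{k-K}_{\lambda,\omega}$ is univalent on $S_k(z)$. Indeed, if $w\in S_k(z)$ were a critical point of $f^{k-K}_{\lambda,\omega}$, its orbit would hit $0$ at some time $j\le k-K-1$, so $f^{k}_{\lambda,\omega}(w)=f^{k-j}_{\lambda,\sigma^{j}\omega}(0)$ with $k-j\ge K+1$, a point lying outside $D$ by the choice of $K$; this contradicts $f^{k}_{\lambda,\omega}(w)\in D$. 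Hence the return sequence may be taken to be \emph{all} integers $k\ge k_0$, with a single constant $K$.

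With uniform univalence established, the rest of the argument is the one from Theorem~\ref{thm:dim_harm}. The Koebe distortion estimate of Lemma~\ref{lem:balls} furnishes $C>0$ such that $\mathbb D(z,r_k)\subset S_k(z)\subset\mathbb D(z,R_k)$ with $r_k=\tfrac1C|(f^{k-K}_{\lambda,\omega})'(z)|^{-1}$ and $R_k=C|(f^{k-K}_{\lambda,\omega})'(z)|^{-1}$, while the transformation rule \eqref{eq:maximal} gives $\mu_{\lambda,\omega}(S_k(z))=2^{-(k-K)}\mu_{\lambda,\sigma^{k-K}\omega}\!\left(f^{k-K}_{\lambda,\omega}(S_k(z))\right)\le 2^{-(k-K)}$. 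Because $0$ escapes uniformly, the present system is typically fast escaping in the sense of Definition~\ref{typ_fast_esc} (in fact $g_{\lambda,\omega}(0)$ is bounded below by a positive constant, uniformly in $\omega$), so Proposition~\ref{prop:global_ergodic} applies and the skew product $F_\lambda(\omega,z)=(\sigma\omega,f_{\lambda,\omega}(z))$ is ergodic for the global measure $\mu_\lambda$. Applying Birkhoff's ergodic theorem to $(\omega,z)\mapsto\ln|f'_{\lambda,\omega}(z)|=\ln|2z|$ yields, for $\mu_\lambda$-a.e. $(\omega,z)$,
$$\frac1k\ln|(f^{k-K}_{\lambda,\omega})'(z)|\longrightarrow\chi_\lambda=\ln 2+{\bf g}_\lambda(0)\qquad(k\to\infty),$$
the identification of the limit being Proposition~\ref{prop:exponent} in this setting.

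Combining the mass and radius estimates exactly as before gives
$$\frac{-\ln\mu_{\lambda,\omega}(\mathbb D(z,r_k))}{-\ln r_k}\ge\frac{(k-K)\ln 2}{\ln|(f^{k-K}_{\lambda,\omega})'(z)|-\ln C}\longrightarrow\frac{\ln 2}{\chi_\lambda},$$
together with the matching upper bound along $R_k$. To reach an arbitrary radius I would interpolate over $r_{k+1}\le r<r_k$; here consecutive radii are automatically comparable, since $\tfrac1k\ln|(f^{k-K}_{\lambda,\omega})'(z)|$ and $\tfrac1{k+1}\ln|(f^{k+1-K}_{\lambda,\omega})'(z)|$ share the limit $\chi_\lambda$, so no separate gap estimate of the type \eqref{eq:dense4} is required. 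This produces the pointwise dimension $\lim_{r\to0}\tfrac{\ln\mu_{\lambda,\omega}(\mathbb D(z,r))}{\ln r}=\tfrac{\ln 2}{\chi_\lambda}$ for a.e.\ $(\omega,z)$, whence $\dim_H(\mu_{\lambda,\omega})=\tfrac{\ln 2}{\ln 2+{\bf g}_\lambda(0)}$ by the standard criterion (\cite{pubook}, Theorem~8.6.5), with strict inequality $<1$ since ${\bf g}_\lambda(0)>0$. The only point requiring genuine care is the uniform univalence step, namely choosing $\delta_0$ so that the escape time $K$ is uniform in $\omega$ and $\lambda$; once that is secured the proof is strictly easier than Theorem~\ref{thm:dim_harm}, as the return sequence $k_n$ degenerates to the full sequence of integers.
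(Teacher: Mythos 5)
Your proof is correct and is essentially the paper's own argument: the paper proves this proposition simply by remarking that the proof of Theorem~\ref{thm:dim_harm} carries over and becomes easier because the return sequence $k_n$ can be taken to be all sufficiently large integers (citing the Remark in Section 6 of \cite{LZ}), which is precisely the uniform-univalence simplification you establish. Your explicit steps — the uniform escape time $K$ from Lemma~\ref{lem:distance2}, the resulting univalence of $f^{k-K}_{\lambda,\omega}$ on preimage components of $D$ for every large $k$, and the verification that the system is typically fast escaping so that Proposition~\ref{prop:global_ergodic} applies — are exactly the details the paper leaves implicit.
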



\begin{prop}\label{prop:green_harmonic2}
There exists $\delta_0>0$ such that for every $\omega\in \Omega$ the function
$$\Lambda\ni\lambda\mapsto g_{\lambda,\omega}(0)$$
is harmonic in $\Lambda=\mathbb D(0,\delta_o)$.
\end{prop}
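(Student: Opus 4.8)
The goal is to show that $\lambda \mapsto g_{\lambda,\omega}(0)$ is harmonic on $\Lambda = \mathbb D(0,\delta_0)$ for each fixed $\omega$. My plan is to exploit the fact that, since $c_0 \notin \mathcal M$ and $\delta_0$ is small, the critical point $0$ escapes to infinity for every parameter choice (Lemma~\ref{lem:distance2}), so the Green's function is governed by the escape dynamics and the formula \eqref{eq:green2} converges. The key structural idea is that for fixed $\omega = (v_0, v_1, \dots)$, each coordinate map $\lambda \mapsto f_{c_0 + \lambda v_j}(z) = z^2 + c_0 + \lambda v_j$ depends \emph{holomorphically} on $\lambda$, and hence so does every finite composition $\lambda \mapsto f^n_{\lambda,\omega}(0)$, being a polynomial in $\lambda$ (and in $0$). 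Thus $g_{\lambda,\omega}(0)$ is a locally uniform limit of functions built from $\ln|f^n_{\lambda,\omega}(0)|$, each of which is harmonic in $\lambda$ away from its zeros.

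\textbf{Main steps.} First I would fix $\omega$ and, using Lemma~\ref{lem:distance2} and Proposition~\ref{prop:log}, pick $n_0$ and shrink $\delta_0$ so that $|f^{n}_{\lambda,\omega}(0)| > R_0$ for all $n \geq n_0$, all $\lambda \in \Lambda$, and uniformly so. This guarantees that for $n \geq n_0$ the functions $\Lambda \ni \lambda \mapsto \frac{1}{2^n}\ln|f^n_{\lambda,\omega}(0)|$ are \emph{harmonic}, because $\lambda \mapsto f^n_{\lambda,\omega}(0)$ is a holomorphic (polynomial) function of $\lambda$ that is nonvanishing on $\Lambda$ (its modulus exceeds $R_0 > 0$), and the logarithm of the modulus of a nonvanishing holomorphic function is harmonic. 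Second, I would invoke Proposition~\ref{prop:log}, specifically the estimate \eqref{eq:est_green}, to show the convergence $\frac{1}{2^n}\ln|f^n_{\lambda,\omega}(0)| \to g_{\lambda,\omega}(0)$ is \emph{locally uniform} in $\lambda$: once $f^{n_0}_{\lambda,\omega}(0) \in \mathbb D_{R_0}^*$, the tail of the defining telescoping sum is controlled by $\sum_{k \geq n_0} 2^{-k}$ times the uniform bound from \eqref{eq:est_green}, giving a uniform Cauchy estimate. Third, since a locally uniform limit of harmonic functions is harmonic, the limit $\lambda \mapsto g_{\lambda,\omega}(0)$ is harmonic on $\Lambda$.

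\textbf{Main obstacle.} The delicate point is establishing the \emph{uniform} entry of the orbit into $\mathbb D_{R_0}^*$ at a common iterate $n_0$ independent of $\lambda \in \Lambda$, and then upgrading pointwise convergence to \emph{local uniform} convergence in $\lambda$ — this is what legitimizes passing harmonicity through the limit. The stability arguments of Proposition~\ref{prop:stability} and Lemma~\ref{lem:distance2} already supply such a uniform $n_0$ for $0$ (after possibly shrinking $\delta_0$), so the remaining work is the telescoping estimate: writing
\begin{equation*}
\frac{1}{2^{n}}\ln|f^{n}_{\lambda,\omega}(0)| - \frac{1}{2^{m}}\ln|f^{m}_{\lambda,\omega}(0)| = \sum_{k=m}^{n-1} \frac{1}{2^{k+1}}\Bigl(\ln|f^{k+1}_{\lambda,\omega}(0)| - 2\ln|f^{k}_{\lambda,\omega}(0)|\Bigr)
\end{equation*}
and bounding each summand using $|f^{k+1}_{\lambda,\omega}(0) - (f^{k}_{\lambda,\omega}(0))^2| = |c_0 + \lambda v_k| \leq R$ together with $|f^{k}_{\lambda,\omega}(0)| > R_0$ for $k \geq n_0$. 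This yields a geometric bound independent of $\lambda$, giving uniform convergence on $\Lambda$ and hence harmonicity of the limit.
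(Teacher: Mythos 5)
Your proposal is correct and follows essentially the same route as the paper's proof: both rely on the holomorphic (polynomial) dependence of $\lambda\mapsto f^n_{\lambda,\omega}(0)$, the uniform escape $|f^n_{\lambda,\omega}(0)|>R_0$ for $n\ge n_0$ supplied by Lemma~\ref{lem:distance2} (making each $\frac{1}{2^n}\ln|f^n_{\lambda,\omega}(0)|$ harmonic), and the identity $\ln|f^{k+1}_{\lambda,\omega}(0)|-2\ln|f^{k}_{\lambda,\omega}(0)|=\ln\bigl|1+c_k/(f^k_{\lambda,\omega}(0))^2\bigr|$ to get uniform convergence and pass harmonicity to the limit. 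Your telescoping sum is just the summed form of the paper's one-step recursion \eqref{eq:green_uniform}, so the two arguments coincide.
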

\begin{proof}
Recall that for every $z\in\mathcal A_\omega$ the Green's function $g_{\lambda, \omega}(z)$ is given by the formula

$$g_{\lambda,\omega}(z)=\lim_{n\to\infty} \frac{1}{2^n}\ln|f^n_{
\lambda,\omega}(z)|.$$
Denote by $g_{n,\omega}$ the function

\begin{equation}\label{eq:green_sequence}
g_{n,\omega}(\lambda,z)= \frac{1}{2^n}\log|f^n_{
\lambda,\omega}(z)|.
\end{equation}

If $\lambda$ is in $\mathbb D(0,\delta_0)$,  as in Lemma~\ref{lem:distance2}, then $0\in\mathcal A_\omega$, so it holds for every $\lambda\in\Lambda$ that
\begin{equation}\label{eq:green_conv2}
g_{\lambda,\omega}(0)=\lim_{n\to\infty}g_{n,\omega}(\lambda,0).
\end{equation}
Now, observe that for each $n$ and for each $\omega$  the function

$$\Lambda\ni\lambda\mapsto g_{n,\omega}(\lambda, 0)$$ is a harmonic finction of the variable $\lambda$. Indeed, this follows from the formula ~\eqref{eq:green_sequence} and from the fact that the function
$\lambda\mapsto\log|f^n_{\lambda,\omega}(0)|)$ is harmonic for all $n>n_0$, since for all $n>n_0$, all $\lambda\in\Lambda$ and all $\omega\in\Omega$ we have that 
$$|f^{n}_{\lambda,\omega}(0)|>R_0.$$

So, to conclude that the function $\Lambda\ni\lambda\mapsto g_{\lambda,\omega}(0)$ is harmonic in $\Lambda=\mathbb D(0,\delta_0)$, it is enough to check that the convergence in \eqref{eq:green_conv2} is (locally) uniform.

To check this,
observe first  that for $z\in\mathcal A_\omega$ we have that
\begin{equation}\label{eq:green_uniform} 
 g_{n+1,\omega}(\lambda, z)=g_{n,\omega}(\lambda,z)+\frac {1}{2^{n+1}} 
 \ln \left |1+\frac {c_n}{(f^n_{\lambda,\omega}(z))^2}\right |
 \end{equation}
 where $c_n=c_n(\lambda, \omega)$.
 
 By Proposition~\ref{prop:log}, $|f_{\lambda,\omega}(w)|>2 |w|$ for every $w\in\mathbb D_{R_0}^*$ , which allows us to conclude that the convergence in \eqref{eq:green_uniform} is uniform in  $\Lambda=\mathbb D(0,\delta_0)$.

\end{proof}

We have the following corollary.
\begin{prop}\label{cor:green_harmonic3}
Let $\nu$ be a Borel probability measure on $\mathbb D(0,1)$. Let   $\mathbb P$ be the product distribution on $\Omega$ generated by $\nu$. Let $c_0\notin\mathcal M$ and let $\Lambda=\mathbb D(0,\delta_0)$ be as in Proposition~\ref{prop:green_harmonic2}.
 The function
$$\lambda\mapsto {\bf g}_\lambda(0)=\int_{\omega\in\Omega}g_{\lambda,\omega}(0)d\mathbb P(\omega)$$
is   harmonic  in $\Lambda$.
\end{prop}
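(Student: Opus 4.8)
The plan is to establish harmonicity of ${\bf g}_\lambda(0)$ by verifying the mean value property, exploiting Proposition~\ref{prop:green_harmonic2}, which already tells us that for every $\omega\in\Omega$ the fiber function $\lambda\mapsto g_{\lambda,\omega}(0)$ is harmonic in $\Lambda$. Recall that a continuous function on a planar domain is harmonic precisely when it satisfies the mean value property over every closed disc contained in the domain. Since each fiber function is harmonic, for every $\lambda_0\in\Lambda$ and every $r>0$ with $\overline{\mathbb D}(\lambda_0,r)\subset\Lambda$ one has
\begin{equation*}
g_{\lambda_0,\omega}(0)=\frac{1}{2\pi}\int_0^{2\pi}g_{\lambda_0+re^{i\theta},\omega}(0)\,d\theta
\end{equation*}
for every $\omega\in\Omega$. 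Integrating this identity against $\mathbb P$ and interchanging the two integrations would immediately yield the mean value property for $\lambda\mapsto{\bf g}_\lambda(0)$.

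The first substantive step is to justify this interchange via Fubini's theorem, for which one needs integrability and joint measurability of $(\theta,\omega)\mapsto g_{\lambda_0+re^{i\theta},\omega}(0)$. Integrability is immediate from the uniform bound $0\le g_{\lambda,\omega}(0)\le \ln R_0+1$ furnished by Proposition~\ref{prop:log} (applicable because all parameters $c_0+\lambda v_j$ lie in $\mathbb D(0,R)$ and $0\in\mathbb D_{R_0}$); this bound is uniform in both $\lambda$ and $\omega$, so the integrand is dominated by a constant. For measurability I would fix $r$ and observe that, by Proposition~\ref{prop:def_green2}, the map $\omega\mapsto g_{\lambda_0+re^{i\theta},\omega}(0)$ is measurable for each $\theta$, whereas $\theta\mapsto g_{\lambda_0+re^{i\theta},\omega}(0)$ is continuous for each $\omega$; a function separately measurable in one variable and continuous in the other is jointly measurable, which legitimizes Fubini. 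After swapping, the inner $\theta$-average reproduces $g_{\lambda_0,\omega}(0)$ by the mean value property of the fiber functions, and integrating over $\omega$ returns ${\bf g}_{\lambda_0}(0)$.

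It remains to pass from the mean value property to harmonicity, which requires $\lambda\mapsto{\bf g}_\lambda(0)$ to be continuous. This follows from dominated convergence: if $\lambda_k\to\lambda$ in $\Lambda$, then $g_{\lambda_k,\omega}(0)\to g_{\lambda,\omega}(0)$ for every $\omega$ since each fiber function is continuous, and the integrands are uniformly bounded by $\ln R_0+1$, so ${\bf g}_{\lambda_k}(0)\to{\bf g}_\lambda(0)$. A continuous function satisfying the mean value property over every small disc is harmonic, completing the argument. I expect the only genuinely delicate point to be the verification of joint measurability needed for Fubini; everything else reduces to the uniform boundedness of the Green functions and the mean value property already carried by each individual fiber.
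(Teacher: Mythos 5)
Your proof is correct and follows the paper's (implicit) argument: the paper states this proposition as an immediate corollary of Proposition~\ref{prop:green_harmonic2}, the intended justification being exactly that a $\mathbb P$-average of a uniformly bounded, measurably parametrized family of harmonic functions is again harmonic. Your write-up simply supplies the standard details the paper omits -- Fubini justified by the uniform bound $0\le g_{\lambda,\omega}(0)\le \ln R_0+1$ and Carath\'eodory joint measurability, the fiberwise mean value property, and dominated convergence for continuity of $\lambda\mapsto {\bf g}_\lambda(0)$.
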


Restricting the values of $\lambda$ to real ones, we obtain the following 
result on the dependence of the dimension of $\mu_{\delta_0,\omega}$ (which is constant for $\mathbb P$-a.e. $\omega$)  on the size of the ''randomness disc'' $\delta$.

\begin{prop}\label{prop:dim_analytic}
Let $\nu$ be a Borel probability measure on $\mathbb D(0,1)$. Let   $\mathbb P$ be the product distribution on $\Omega$ generated by $\nu$.
The function $\delta\mapsto {\bf g}_\delta(0)$  is real analytic  in the segment $[0,\delta_0)$.   More precisely: this function has a real analytic (even: harmonic)  extension to the disc $\mathbb D(0,\delta_0)$.
This implies, by Proposition~\ref{prop:max_measure2}, that the dimension 
$[0,\delta_0)\ni \delta\mapsto \dim\mu_{\delta,\omega}$ (the function which is constant for $\mathbb P$-a.e. $\omega\in\Omega$)
is real analytic in $[0,\delta_0)$.
\end{prop}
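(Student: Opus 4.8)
The plan is to leverage Proposition~\ref{cor:green_harmonic3}, which already establishes that the map $\lambda\mapsto{\bf g}_\lambda(0)$ is harmonic on the \emph{full complex disc} $\Lambda=\mathbb D(0,\delta_0)$. The key observation is that this harmonic function is itself the extension asserted in the statement: for a real parameter $\lambda=\delta\in[0,\delta_0)$ the perturbed system is $f_{c_0+\delta v_n}$, which is exactly the one defining ${\bf g}_\delta(0)$, so the complex harmonic function restricts on the real diameter to $\delta\mapsto{\bf g}_\delta(0)$. Thus the harmonic (in particular, real analytic) extension to $\mathbb D(0,\delta_0)$ requires no separate construction.

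Next I would invoke the standard regularity fact that a harmonic function on a planar domain is real analytic. Concretely, since $\mathbb D(0,\delta_0)$ is simply connected, I can write ${\bf g}_\lambda(0)=\mathrm{Re}\,F(\lambda)$ for some $F$ holomorphic in $\lambda$ on the disc; as $F$ has a local power series expansion, its real part is real analytic in the two real variables $(\mathrm{Re}\,\lambda,\mathrm{Im}\,\lambda)$. Restricting to the real axis, $\delta\mapsto\mathrm{Re}\,F(\delta)$ is real analytic on the open interval $(-\delta_0,\delta_0)$, which contains $[0,\delta_0)$; in particular real analyticity holds at the endpoint $\delta=0$, since $0$ is interior to $(-\delta_0,\delta_0)$.

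Finally, for the dimension I would combine this with the formula $\dim\mu_{\delta,\omega}=\frac{\ln 2}{\ln 2+{\bf g}_\delta(0)}$ from Proposition~\ref{prop:dim_harm}. Since ${\bf g}_\delta(0)\ge 0$ (it is a value of a Green function), the denominator stays bounded below by $\ln 2>0$, so the elementary map $t\mapsto\frac{\ln 2}{\ln 2+t}$ is real analytic on a neighbourhood of the range of $\delta\mapsto{\bf g}_\delta(0)$. As the composition of real analytic functions is real analytic, the function $\delta\mapsto\dim\mu_{\delta,\omega}$ is real analytic on $[0,\delta_0)$.

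I do not expect a genuine obstacle at this stage: the substantive analytic work, namely proving harmonicity via the locally uniform convergence of the harmonic approximants $g_{n,\omega}(\lambda,0)$ and integration in $\omega$, was already carried out in Proposition~\ref{prop:green_harmonic2} and Proposition~\ref{cor:green_harmonic3}. The only points deserving a moment of care are checking that the complex-analytic extension restricts correctly to the real parameter and that the denominator in the dimension formula never vanishes, both of which are immediate; what would otherwise have been the hard part is the harmonicity itself, which is available as a prior result.
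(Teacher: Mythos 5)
Your proposal is correct and follows essentially the same route as the paper, which treats this proposition as an immediate consequence of Proposition~\ref{cor:green_harmonic3} (harmonicity of $\lambda\mapsto{\bf g}_\lambda(0)$ on $\mathbb D(0,\delta_0)$, hence real analyticity, restricted to the real diameter) combined with the dimension formula of Proposition~\ref{prop:dim_harm}. Your citation of Proposition~\ref{prop:dim_harm} for the final step is in fact the right one; the paper's reference to Proposition~\ref{prop:max_measure2} there appears to be a typo.
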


\medskip

Assuming that the initial distribution $\nu$ is uniform, we obtain a stronger result.

\begin{prop}\label{prop:stronger}
 Let $\nu$ be the uniform distribution  on $\mathbb D(0,1)$. Let   $\mathbb P$ be the product distribution on $\Omega$ generated by $\nu$.
For sufficiently small $\delta_0$ the function $\delta\mapsto {\bf g}_\delta(0)$  is constant in the segment $[0,\delta_0)$.
Consequently, for sufficiently small $\delta_0$ the function

$$[0,\delta_0)\ni \delta\mapsto \frac{\ln 2}{\ln 2+{\bf g}_\delta(0)}$$
is constant in $[0,\delta_0)$, which implies, by Proposition~\ref{prop:max_measure2}, that the dimension 

$[0,\delta_0)\ni \delta\mapsto \dim\mu_{\delta,\omega}$ (the function which is constant for $\mathbb P$-a.e. $\omega\in\Omega$)
is constant in $[0,\delta_0)$, and equal
to the dimension of $\mu_{c_0}$, i.e.
$$ \dim\mu_{\delta,\omega}=\mu_{c_0}=\frac{\ln 2}{\ln 2+g_{c_0}(0)}.$$
 \end{prop}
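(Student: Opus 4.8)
The plan is to exploit the rotational symmetry that the \emph{uniform} distribution $\nu$ confers on $\mathbb P$, and then to combine it with the harmonicity already established in Proposition~\ref{cor:green_harmonic3}.

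First I would record the key functional identity. Write $\omega=(v_0,v_1,\dots)$ and recall from \eqref{eq:lambda2} that the parameter sequence attached to $(\lambda,\omega)$ is $c_n=c_0+\lambda v_n$. For $\theta\in\mathbb R$ let $e^{i\theta}\omega:=(e^{i\theta}v_0,e^{i\theta}v_1,\dots)$. Then the parameter sequence attached to $(\lambda,e^{i\theta}\omega)$ is $c_0+\lambda e^{i\theta}v_n$, which is exactly the sequence attached to $(\lambda e^{i\theta},\omega)$. Hence the non-autonomous iterates coincide, $f^n_{\lambda,e^{i\theta}\omega}=f^n_{\lambda e^{i\theta},\omega}$ for every $n$, and the defining limit $g_{\lambda,\omega}(0)=\lim_n 2^{-n}\ln|f^n_{\lambda,\omega}(0)|$ gives
$$g_{\lambda,e^{i\theta}\omega}(0)=g_{\lambda e^{i\theta},\omega}(0).$$

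Next I would use that, since $\nu$ is uniform (hence rotation invariant) on $\mathbb D(0,1)$, the product measure $\mathbb P$ is invariant under the coordinatewise rotation $\omega\mapsto e^{i\theta}\omega$. Integrating the identity above and changing variables yields, for every $\theta$,
$${\bf g}_\lambda(0)=\int_\Omega g_{\lambda,\omega}(0)\,d\mathbb P(\omega)=\int_\Omega g_{\lambda e^{i\theta},\omega}(0)\,d\mathbb P(\omega)={\bf g}_{\lambda e^{i\theta}}(0),$$
so that $\lambda\mapsto {\bf g}_\lambda(0)$ depends only on $|\lambda|$, i.e.\ it is radial.

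Finally I would combine radiality with harmonicity. By Proposition~\ref{cor:green_harmonic3} the function $\lambda\mapsto {\bf g}_\lambda(0)$ is harmonic in $\mathbb D(0,\delta_0)$; a harmonic \emph{radial} function on a disc is necessarily constant, since the mean value property at the centre gives, for every $r<\delta_0$,
$${\bf g}_0(0)=\frac{1}{2\pi}\int_0^{2\pi}{\bf g}_{re^{i\theta}}(0)\,d\theta={\bf g}_r(0),$$
the integrand being independent of $\theta$. Thus $\delta\mapsto{\bf g}_\delta(0)$ is constant on $[0,\delta_0)$. At $\delta=0$ every parameter equals $c_0$, so $g_{0,\omega}(0)=g_{c_0}(0)$ for all $\omega$ and ${\bf g}_0(0)=g_{c_0}(0)$; inserting this into the dimension formula of Proposition~\ref{prop:dim_harm} gives the asserted constant value $\frac{\ln 2}{\ln 2+g_{c_0}(0)}=\dim\mu_{c_0}$. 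I expect no serious obstacle here: the argument is short and structural. The only points needing care are the verification of $f^n_{\lambda,e^{i\theta}\omega}=f^n_{\lambda e^{i\theta},\omega}$, which rests solely on the linear dependence $c_n=c_0+\lambda v_n$ of the parameter on $\lambda$, and the routine justification (using the uniform bounds of Proposition~\ref{prop:log} and the measurability established earlier) that the rotation may be passed through the integral defining ${\bf g}_\lambda(0)$.
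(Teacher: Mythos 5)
Your proposal is correct and follows essentially the same route as the paper's proof: the identity $g_{\lambda,e^{i\theta}\omega}(0)=g_{\lambda e^{i\theta},\omega}(0)$, the rotation invariance of $\mathbb P$ coming from the uniformity of $\nu$, and the conclusion that a rotation-invariant harmonic function on the disc is constant. Your write-up merely makes two steps more explicit than the paper does — the mean value argument for why radial plus harmonic implies constant, and the identification ${\bf g}_0(0)=g_{c_0}(0)$ at $\delta=0$ — both of which are fine.
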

\begin{proof}
 We already know by Corollary ~\ref{cor:green_harmonic3} that the function 
 $$\lambda\mapsto {\bf g}_\lambda(0)=\int_{\omega\in\Omega}g_{\lambda,\omega}(0)d\mathbb P(\omega)$$
is harmonic on $\Lambda=\mathbb D(0,\delta_0)$.
 
Now, observe that, in the case of the product distribution $\mathbb P$ generated by the uniform distribution on  $\mathbb D(0,\delta_0)$, this function is also rotation invariant.
Indeed, let $\eta\in\mathbb S^1$ be an arbitrary rotation angle. Then

$${\bf g}_{\eta\cdot \lambda}(0)=\int_{\omega\in\Omega}g_{\eta\cdot \lambda,\omega}(0)d\mathbb P(\omega)= 
\int_{\omega\in\Omega}g_{\lambda,\eta\cdot\omega}(0)d\mathbb P(\omega), $$
where we denoted by $\eta\cdot \omega$ the sequence with all entries multiplied by $\eta$, i.e., 
if  $\omega=(v_0,v_1,\dots)$ the $\eta\cdot \omega=(\eta v_0, \eta v_1,\dots)$.
Now notice that the map
$E:\Omega\to\Omega$
defined by 
$$E(\omega)=\eta\cdot\omega$$
preserves the probability distribution $\mathbb P$ (here we use the fact that $\mathbb P$ is generated by the uniform distribution on the disc $\mathbb D(0,1)$).
Therefore,
$${\bf g}_{\eta\cdot \lambda}(0)=\int_{\omega\in\Omega}g_{\eta\cdot \lambda,\omega}(0)d\mathbb P(\omega)= \int_{\omega\in\Omega}g_{\lambda,E(\omega),}(0)d\mathbb P(\omega)
=\int_{\omega\in\Omega}g_{\lambda,\omega}(0)d\mathbb P(\omega)={\bf g}_{ \lambda}(0)$$

So, $\Lambda\ni \lambda\mapsto {\bf g}_\lambda(0)$ is harmonic in $\Lambda=\mathbb D(0,\delta_0)$ and rotation-invariant, thus- constant.
\end{proof}

\begin{remark}
Another random dimension to study is, of course, the Hausdorff dimension of the random Julia set itself.    
Denote by $\dim_H(J_{\delta,\omega})$ the value of the Hausdorff dimension of the random Julia set $J_{\delta,\omega}$.
The following theorem is a consequence of the result of  Rugh (see~\cite{rugh}).  

\begin{Theorem*}[see \cite{rugh}]
Let $\nu$ be a Borel probability measure on $\mathbb D(0,1)$. Let   $\mathbb P$ be the product distribution on $\Omega$ generated by $\nu$. 

There exists $\delta_0>0$ such that  for every $\delta\in [0,\delta_0)$ the function
$$\omega\mapsto \dim_H(J_{\delta, \omega})$$ is constant $\mathbb P$-- almost everywhere.  Denote this 
function by ${\bf d}_\delta$.
Then the function $\delta\mapsto {\bf d}_\delta$  is real analytic in the segment $[0,\delta_0)$. More precisely: this function has a real analytic extension to the disc $\mathbb D(0,\delta_0)$.
\end{Theorem*}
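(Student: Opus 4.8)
The plan is to recognise the perturbed family $\{f_{\lambda,\omega}\}$ as a random conformal repeller in the sense of Rugh and then to invoke the analyticity result of \cite{rugh} directly. First I would establish uniform hyperbolicity of the random system for $\delta_0$ small. Since $c_0\notin\mathcal M$, the autonomous map $f_{c_0}$ is hyperbolic, hence uniformly expanding on a neighbourhood of its Julia set $J(f_{c_0})$. By Proposition~\ref{prop:stability} every Julia set $J_{\lambda,\omega}$ lies in an arbitrarily small neighbourhood of $J(f_{c_0})$ once $\delta_0$ is small, and by Lemma~\ref{lem:distance2} all these Julia sets stay a definite distance $\eta>0$ from the critical point $0$. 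Consequently $|f_{\lambda,\omega}'|=|2z|\ge 2\eta$ is bounded below on the relevant region, and a standard cone-field argument yields a fixed neighbourhood $W$ of $J(f_{c_0})$ on which the compositions $f^n_{\lambda,\omega}$ are uniformly expanding, with distortion and expansion bounds independent of $\omega$ and of $\lambda\in\mathbb D(0,\delta_0)$. This exhibits $(\Omega,\mathbb P,\sigma)$ together with the fibre maps $f_{\lambda,\omega}$ as a random expanding repeller to which Rugh's formalism applies.

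Next I would prove the almost-sure constancy of $\dim_H(J_{\delta,\omega})$. As in Proposition~\ref{prop:est_julia} one has $J_{\delta,\sigma\omega}=f_{\delta,\omega}(J_{\delta,\omega})$, and since $0\notin J_{\delta,\omega}$ (Lemma~\ref{lem:distance2}) the map $f_{\delta,\omega}$ is conformal and locally bi-Lipschitz on a neighbourhood of the compact set $J_{\delta,\omega}$; hence it preserves Hausdorff dimension, so $\dim_H(J_{\delta,\sigma\omega})=\dim_H(J_{\delta,\omega})$. Thus $\omega\mapsto\dim_H(J_{\delta,\omega})$ is $\sigma$-invariant, and ergodicity of $\sigma$ forces it to be $\mathbb P$-almost surely equal to a constant ${\bf d}_\delta$.

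For the real-analytic dependence on the parameter I would invoke Rugh's theorem \cite{rugh}. The essential input is that the fibre maps depend \emph{holomorphically} on the parameter: $\lambda\mapsto f_{c_0+\lambda v_j}$ is holomorphic for each $v_j\in\mathbb D(0,1)$. In Rugh's framework the dimension is characterised through a Bowen-type equation as the unique zero of a random pressure function built from the random transfer operators associated with the potentials $-t\ln|f_{\lambda,\omega}'|$; Rugh shows that the leading random eigenvalue of this operator family depends real-analytically on $(t,\lambda)$ and extends holomorphically in $\lambda$, whence the implicitly defined zero ${\bf d}_\lambda$ inherits a real-analytic extension to the full disc $\mathbb D(0,\delta_0)$. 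Restricting to real $\lambda=\delta$ gives the asserted real-analyticity of $\delta\mapsto{\bf d}_\delta$ on $[0,\delta_0)$.

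The main obstacle I expect is the careful verification that our concrete quadratic family genuinely satisfies all of Rugh's hypotheses — in particular uniform expansion and bounded distortion with constants independent of $\omega$ and $\lambda$ simultaneously, together with the measurability and holomorphy conditions on the fibre maps needed for the random transfer operator to act on the appropriate space and to depend analytically on $(t,\lambda)$. Once the system is placed correctly inside that formalism, the analyticity of the dimension is a black-box consequence of \cite{rugh}, and essentially all the work lies in the hyperbolicity estimates of the first step.
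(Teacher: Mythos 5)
Your proposal is correct and takes essentially the same approach as the paper, which offers no independent proof of this statement but simply cites Rugh's theorem on random conformal repellers: you verify the hypotheses (uniform hyperbolicity near $J(f_{c_0})$ for $c_0\notin\mathcal M$, via Proposition~\ref{prop:stability} and Lemma~\ref{lem:distance2}, together with holomorphic dependence of the fibre maps on $\lambda$) and then invoke \cite{rugh} as a black box, exactly as intended. One cosmetic remark: the ``cone-field argument'' is unnecessary in this conformal setting --- uniform expansion of the perturbed compositions follows from hyperbolicity of $f_{c_0}$ and a standard perturbation estimate, not from the lower bound $|2z|\ge 2\eta$ alone --- but this does not affect the validity or the route of the argument.
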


For every $c_0\notin\mathcal M$ it is known that there is a gap between the dimension of the harmonic measure and the dimension of the Julia set itself, i i.e.

$$\dim_H(\mu_{c_0})<\dim_H(J_{c_0}).$$

Now,   when the  the iteration becomes random, and the parameter $c$ is chosen in consecutive steps independently,  and  according to the uniform distribution on $\mathbb D(c_0, \delta)$, $\delta<\delta_0$ then the dimension of the harmonic measure remains (almost surely) unchanged,  while the dimension of the random Julia set varies (real) analytically with $\delta$.   This should motivate more detailed study of the  dependence of the dimension of the random Julia set on the parameter $\delta$.
\end{remark}

 \

\end{document}